\documentclass{amsart}
\usepackage{amsmath} 
\usepackage{stmaryrd}
\usepackage{tikz}
\usetikzlibrary{decorations.pathreplacing}
\usetikzlibrary{positioning}
\usepackage{amsthm,amssymb}
\usepackage[unicode]{hyperref}
\hypersetup{
	colorlinks=true,
	linkcolor=black,
	citecolor=black,
	urlcolor=blue
}

\usepackage{xcolor}
\usepackage{mathtools}
\usepackage{calc}
\usepackage{graphicx}
\usepackage{bm}
\usepackage{pifont}
\usepackage{enumitem}
\usepackage{xspace}

\newtheorem{thm}{Theorem}[section]
\newtheorem{lem}[thm]{Lemma}
\newtheorem{qst}[thm]{Question}
\newtheorem{cor}[thm]{Corollary}

\newtheorem{prop}[thm]{Proposition}

\newtheorem{claim}[thm]{Claim}

\theoremstyle{definition}
\newtheorem{defn}[thm]{Definition}
\newtheorem{fact}[thm]{Fact}

\newtheorem{rem}[thm]{Remark}

\numberwithin{equation}{section}

\newcommand{\lb}{\left(}
\newcommand{\rb}{\right)}
\newcommand{\la}{\langle}
\newcommand{\ra}{\rangle}

\newcommand{\lrtar}{\leftrightarrow}

\let\standardwidetilde\widetilde
\let\standardwidehat\widehat
\usepackage{yhmath}

\let\yhwidehat\widehat
\let\widetilde\standardwidetilde
\let\widehat\standardwidehat

\newcommand{\what}[1]{\yhwidehat{#1}}
\newcommand{\wbar}[1]{\overline{#1}}
\newcommand{\uphar}{\upharpoonright}
\newcommand{\idl}[1]{{\mathcal #1}}
\newcommand{\mdl}[1]{{\mathfrak #1}}

\newcommand{\bfsgm}{{\boldsymbol \Sigma}}

\newcommand{\infsub}[1]{{[#1]^\omega}}
\newcommand{\finsub}[1]{{[#1]^{<\omega}}}

\newcommand{\finfunc}[1]{{#1^{<\omega}}}
\newcommand{\fintree}{{\omega^{<\omega}}}
\newcommand{\bairesp}{{\omega^\omega}}

\newcommand{\ramseysp}{{[\omega]^\omega}}

\newcommand{\prst}{{\mathcal P}}

\newcommand{\dom}{\operatorname{dom}}
\newcommand{\ran}{{\rm ran}}

\newcommand{\compl}{{\complement}}

\newcommand{\chaxiom}{{\sf CH}}

\newcommand{\encfunc}{{\mathrm{e}}}

\newcommand{\finalpha}[1]{\mathrm{Fin}^{#1}}

\newcommand{\evendiff}{\mathcal{ED}}
\newcommand{\Eq}[1]{\operatorname{Eq}_{#1}}
\newcommand{\gr}{\operatorname{gr}}

\newcommand{\Ncal}{\mathcal{N}}
\newcommand{\U}{\mathcal{U}}

\newcommand{\VD}{\ensuremath{\operatorname{VD}}\xspace}
\newcommand{\tagg}{\operatorname{t}}
\newcommand{\base}{\operatorname{b}}
\newcommand{\real}{\operatorname{r}}
\newcommand{\degree}{\operatorname{d}}
\newcommand{\Breq}{\mathfrak{B}}
\newcommand{\rec}{\operatorname{rec}}
\newcommand{\good}{\operatorname{good}}
\newcommand{\prodspc}[1]{\idl{N}_{#1}}
\newcommand{\trace}{\operatorname{tr}}
\newcommand{\FP}{\ensuremath{\operatorname{FP}}\xspace}
\newcommand{\FSP}{\ensuremath{\operatorname{FSP}}\xspace}
\newcommand{\omegaCP}{\ensuremath{\omega\text{-}\mathrm{CP}}\xspace}
\newcommand{\own}{\operatorname{own}}

\newenvironment{claimproof}[1]{
	\renewcommand{\qedsymbol}{{\tiny Claim #1} \(\square\)}
	\begin{proof}[Proof of Claim #1]
	}{
	\end{proof}
}

\title{Van Douwen Families, Productivity and Ultrafilter Maximality}

\author{Jialiang He}
\address{School of Mathematics, Sichuan University, Chengdu 610064, China}
\email{jialianghe@scu.edu.cn}

\author{Jintao Luo}
\address{Independent researcher}
\curraddr{Universit\'e Claude Bernard Lyon 1,
Institut Camille Jordan, CNRS UMR5208,
43 Boulevard du 11 Novembre 1918,
69622 Villeurbanne Cedex, France}
\email{luo@math.univ-lyon1.fr; jintaoluo@foxmail.com}
\thanks{Jintao Luo's contribution to the research presented in this article
was carried out independently before his enrollment at Universit\'e Claude Bernard Lyon 1 and does not form part of his doctoral
thesis project.}

\author{Hang Zhang}
\address{School of Mathematics, Southwest Jiaotong University, Chengdu 610756, China}
\email{zhanghangzh@sina.com; hzhangzh@gmail.com}

\subjclass[2020]{Primary 03E15; Secondary 03E05, 03E17, 54D80}
\keywords{maximal eventually different families, Van Douwen families, maximal almost disjoint families, ultrafilters, definability, ideals}

\thanks{}

\date{}

\begin{document}
	
	\begin{abstract}
		We study idealized maximal eventually different families through Van Douwen maximality, productivity, and ultrafilter maximality. We show that Van Douwen $\mathcal{J}$-MED families correspond to $(\mathcal{J}\times\emptyset)$-MAD families, and for coanalytic ideals containing the finite sets they are further reduced to infinite analytic $\mathcal{J}$-MAD families. This yields nonexistence results for analytic Van Douwen families for several standard ideals, together with a characterization of the finite case.
		
		We recall the definition of finite productivity and introduce finite-section and $\omega$-centered productivity. Finite-section productivity is equivalent to $\mathcal{U}^*$-maximality for some ultrafilter $\mathcal{U}$ extending $\mathcal{J}^*$, and these notions admit natural Stone-space characterizations. We construct Borel finitely productive families for uniformly weakly Ramsey ideals and Borel $\omega$-centered productive families for uniformly weakly $P^+$ ideals, while no analytic finite-section productive $\mathrm{Fin}$-MED family exists. Finally, for every Ramsey ultrafilter $\mathcal{U}$ and $1\leq\alpha<\omega_1$, there is no analytic $(\mathcal{U}^{\alpha})^*$-MED family, while under $V=L$, there is a $ \boldsymbol{\Sigma}^1_2 $ Ramsey ultrafilter $ \mathcal{U} $ where there is a coanalytic $ \mathcal{U}^* $-MED family.
	\end{abstract}

	\maketitle
	
	\section{Introduction}
	
	It is a classical theorem of Mathias that there is no infinite analytic maximal almost disjoint family on $ \omega $ \cite{mathias1977happy}. While for maximal eventually different families, Horowitz and Shelah constructed a Borel instance in $\bairesp$ \cite{horowitz2024eventually}. A useful way to study this difference is to replace $\mathrm{Fin}$ by a general ideal $\idl{J}$. This point of view has been studied in several recent works; see for example \cite{he2024maximal,he2026ramsey}. The general problem is to understand for which ideals the MED
	behavior remains close to the classical Borel examples, and for which ideals it
	becomes closer to the MAD obstruction.
	
	Let $\idl{J}$ be an ideal on a countable set $D$ and $ \Eq{D}(f,g) = \{n\in D: f(n) = g(n)\} $. The extreme case is when $\idl{J}=\U^*$ for an ultrafilter $\U$. Under this assumption $ f\ E_{\U}\ g \lrtar \Eq{D}(f,g)\in\U $ is an equivalence relation, and a $\U^*$-MED family is exactly a transversal for $E_{\U}$. Equivalently, it chooses one representative from each element of the ultrapower $\omega^D/\U$.
	
	One combinatorial form of ultrafilter maximality is maximality on every positive set. This leads to Van Douwen maximality. A $\idl{J}$-eventually different family is Van Douwen $\idl{J}$-MED if its restriction to every $X\in\idl{J}^+$ is $(\idl{J}\uphar X)$-MED. For $\mathrm{Fin}$, Raghavan proved that Van Douwen MED families exist, but no analytic ones exist \cite{raghavan2010van}. Schrittesser later asked when a closed MED family with full localization coideal can exist in more general product spaces \cite{schrittesser2018compactness}. Related forms of Van Douwen maximality were studied in \cite{schembecker2025van,schembecker2026productivity}. If $\idl{J}=\U^*$, ordinary $\U^*$-MEDness and Van Douwen $\U^*$-MEDness coincide.
	
	Our first results concern definable Van Douwen $\idl{J}$-MED families. Let $ F $ be a partial function from a countable set $ D $ to $ (\omega+1)\setminus\{0\} $, and let 
	\[
	\Ncal_F :=\prod_{n\in\dom(F)}F(n),\ \Sigma_F=\{(n,m):n\in\dom(F),\ m<F(n)\}.
	\]
	A family $\idl{E}\subseteq\prodspc{F}$ is Van Douwen $\idl{J}$-MED exactly when the set of graphs $\gr''\idl{E}$ is a $(\idl{J}\times\emptyset)$-MAD family on $\Sigma_F$. For coanalytic ideals containing all finite sets, we show that an infinite analytic $(\idl{J}\times\emptyset)$-MAD family gives an infinite analytic $\idl{J}$-MAD family on the domain set. In this way, the definability problem for Van Douwen families is reduced to a MAD problem. This gives, in particular, no infinite analytic Van Douwen $\idl{J}$-MED families for the eventually different ideals and the countable iterated Fr\'echet ideals considered in \cite{he2026ramsey}. We also determine the finite case. A finite Van Douwen $\idl{J}$-MED family exists exactly when $F$ is $\idl{J}^*$-constant. So for $\idl{J}=\mathrm{Fin}$, a closed Van Douwen $\mathrm{Fin}$-MED family in $\prodspc{F}$ exists exactly when $F$ is eventually equal to a positive integer, and then the family is finite. This gives the corresponding answer to Schrittesser's question. 
	
	The way of reduction to MAD families is also used for Ramsey ultrafilters. We prove that if $\U$ is Ramsey and $1\leq\alpha<\omega_1$, then there is no infinite analytic $ (\U^\alpha)^*\times\emptyset$-MAD family, and then no analytic $(\U^\alpha)^*$-MED family. Equivalently, $E_{\U^\alpha}$ has no analytic transversal. While, assuming $V=L$, we also construct a $\bfsgm^1_2$ Ramsey ultrafilter carrying a coanalytic $\U^*$-MED family.
	
	Another combinatorial form of ultrafilter maximality is productivity. Finite productivity, in the sense of \cite{schembecker2026productivity}, asks that finitely many functions can be caught at the same time. This definition can be generalized to the idealized case. We also consider its natural countable extension, $ \omega $-centered productive $\idl{J}$-MEDness. That is, for every sequence $\langle h_i:i\in\omega\rangle$, there are $f_i\in\idl{E}$ and one $B\in\idl{J}^+$ such that $ \forall i\in\omega, \ B\subseteq_{\idl{J}}\Eq{D}(h_i,f_i) $. The main notion connecting productivity with ultrafilters is finite-section productivity. A family is finite-section productive $\idl{J}$-MED if each function $h$ is assigned a finite nonempty set $M(h)\subseteq\idl{E}$, and every finite set of functions is caught by functions in the corresponding assignments. Moreover, a $ \idl{J} $-eventually different family $ \idl{E} $ is finite-section productive $ \idl{J} $-MED if and only if there is an ultrafilter $ \U $ such that $ \idl{E} $ is $ \U^* $-MED. Thus finite-section productivity is an internal form of ultrafilter maximality.
	
	These notions have a common Stone-space description. Let
	\[
	\beta_{\idl{J}}D
	=
	\{\U\in\beta D:\idl{J}^*\subseteq\U\},
	\]
	and for $h\in\prodspc{F}$ define
	\[
	O_h(\idl{E})
	=
	\{\U\in\beta_{\idl{J}}D:
	\exists f\in\idl{E}\,
	\Eq{D}(h,f)\in\U\}.
	\]
	Then ordinary $\idl{J}$-MEDness means that every $O_h(\idl{E})$ is nonempty; finite productivity means that the family $\{O_h(\idl{E}):h\in\prodspc{F}\}$ has the finite intersection property; finite-section productivity means that its total intersection is nonempty; and Van Douwen maximality means that every $O_h(\idl{E})$ is dense. For weakly $P^+$ ideals, $ \omega $-centered productivity is described by nonempty intersections of countable subfamilies. This also connects the passage from Van Douwen maximality to finite-section productivity with the relative pseudointersection number $\mdl{p}_{\idl{J}}$ studied in \cite{filipow2022characterizing}.
	
	We also study definable productive families. Building on the Borel idealized MED methods of \cite{he2024maximal} and the productive constructions of \cite{schembecker2026productivity}, we obtain Borel finitely productive families for uniformly weakly Ramsey ideals and Borel $ \omega $-centered productive families for uniformly weakly $P^+$ ideals. In contrast, there is no analytic finite-section productive $\mathrm{Fin}$-MED family. Thus, finite and countable productivity can still occur at the Borel level, while finite-section productivity already has an analytic obstruction.
	
	\section{Preliminaries}
	
	\subsection{Notations}
	
	All countable sets are identified with recursive copies whenever descriptive complexity is involved. If \(A\) is a set and \(f,g\in\omega^{A}\) are functions, define \(\Eq{A}(f,g):=\{n\in A:f(n)=g(n)\}\) and \(\gr(f):=\{(n,f(n)):n\in A\}\). For a subset \(X\subseteq A\times\omega\), define
	\(X(n):=\{m\in\omega:(n,m)\in X\}\) for any $ n\in A $, and define \(\dom(X):=\{n\in A:X(n)\neq\emptyset\}\). We write
	\([X]^{\omega}\) and \([X]^{<\omega}\) for the infinite and finite subsets of
	\(X\). For any $ n\in\omega $, write $ [X]^n $ for subsets of $ X $ with size $ n $. Also, the relation \(B\subseteq^* C\) means that \(B\setminus C\) is finite.
	
	Let $A$ be a set. A collection $\mathcal I \subseteq \mathcal P(A)$ is called a \textbf{proper ideal} on $A$ if it's closed under finite unions and taking subsets, with $\emptyset \in \mathcal I$ and $A \notin \mathcal I$. For an ideal $\mathcal I$, we denote by $\mathcal I^+ = \mathcal P(A) \setminus \mathcal I$ the collection of $\mathcal I$-\textbf{positive} sets, and $ \idl{I}^+ $ is called a \textbf{coideal}. For any proper ideal $ \idl{I} $ and an element $ X\in\idl{I}^+ $, we define the \textbf{restriction of $ \idl{I} $ on $ X $} as $ \idl{I}\uphar X := \{B\subseteq X: B\in\idl{I}\} $. For our purposes, we assume all ideals are proper. A \textbf{proper filter} on $A$ is a collection $\mathcal F \subseteq \mathcal P(A)$ such that its dual $\mathcal{F}^\ast = \{ A \setminus X : X \in \mathcal F \}$ is a proper ideal on $A$. An \textbf{ultrafilter} on $ A $ is a proper filter which contains exactly one of $ X $ and $ A\setminus X $ for any $ X\subseteq A $. An ultrafilter is \textbf{non-principal} if it contains no singletons.
	
	Let $ F $ be a partial function from a countable set $ S $ to $ (\omega+1)\setminus\{0\} $. We always assume that $ \dom(F) $ is infinite. Define the product space $ \Ncal_F $ as
	\[
	\Ncal_F :=\prod_{n\in\dom(F)}F(n),
	\]
	which is non-empty.
	\begin{defn}
		Let $ \idl{I} $ be an ideal on $ \dom(F) $. Two functions $ f, g \in \Ncal_F$ are called \textbf{$ \idl{I} $-eventually different}, if $ \Eq{\dom(F)}(f,g)\in \idl{I} $. We say a family $ \idl{E}\subseteq\Ncal_F $ is \textbf{$ \idl{I} $-eventually different} ($ \idl{I} $-e.d. for short), if any distinct elements in $ \idl{E} $ are $ \idl{I} $-eventually different. An $ \idl{I} $-e.d. family $ \idl{E} $ is called \textbf{maximal} ($ \idl{I}$-MED for short), if for any $ h\in\Ncal_F $, there is some $ f\in \idl{E} $ such that $ \Eq{\dom(F)}(h,f) \in\idl{I}^+$.
	\end{defn}
	
	\begin{defn}
		Let $ \idl{I} $ be a proper ideal on a countable set $ S $. The subsets $A,B\subseteq S$ are called \textbf{$\mathcal I$-almost disjoint} if $A\cap B\in\mathcal I$. A family $\mathcal A\subseteq \idl{I}^+$ is an \textbf{$\mathcal I$-almost disjoint ($ \idl{I}$-a.d.) family}, if its members are pairwise $\mathcal I$-almost disjoint. An \textbf{$\mathcal I$-MAD family} is an $\mathcal I$-a.d. family that is maximal under inclusion, i.e. for every $X\in \idl{I}^+$, there exists $A\in\mathcal A$ such that $X\cap A \in \idl{I}^+$.
	\end{defn}
	
	A coideal $ \idl{H} $ on $ \omega$ is called \textbf{selective}, if it contains no finite subsets, and for any sequence $ H_0\supseteq H_1 \supseteq \cdots $ in $ \idl{H} $, there is a sequence of natural numbers $ k_0 < k_1 <\cdots $ such that $ \{k_0, k_1, \cdots \}\in \idl{H} $ and $ k_{i+1}\in H_{k_i} $ for every $ i\in\omega $. A non-principal ultrafilter \(\U\) on \(\omega\) is called \textbf{Ramsey} if it is selective.
	
	The following theorem is well-known.
	
	\begin{thm}[{\cite[Theorem~9.31]{mathias1977happy}}]\label{thm:ramsey_ultrafilter}
		For a non-principal ultrafilter $ U $ on $ \omega $, it's Ramsey if and only if any one of the following is true:
		\begin{enumerate}[label=(\arabic*),font=\upshape]
			\item For every coloring \(c:[\omega]^2 \to 2\), there is a homogeneous set in \(\U\), i.e. an $ X\in \idl{U} $ such that $ c\uphar [X]^2 $ is constant.
			\item Every analytic subset of $ \ramseysp $ is completely $ \idl{U} $-Ramsey.
		\end{enumerate}
	\end{thm}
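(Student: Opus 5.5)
The statement has two parts: Ramseyness is equivalent to (1), the partition property for pairs, and equivalent to (2), complete $\U$-Ramseyness of analytic sets. The plan is a cycle of implications. Selective $\Rightarrow$ (1) $\Rightarrow$ selective handles the first equivalence. Then (2) $\Rightarrow$ (1) is trivial, and selective $\Rightarrow$ (2) is the substantial direction, Mathias's theorem.

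\emph{Selective $\Rightarrow$ (1).} Given $c:[\omega]^2\to 2$, define the sets $A_n^i=\{m>n: c(\{n,m\})=i\}$. For each $n$ exactly one of $A_n^0,A_n^1$ lies in $\U$; call its colour $i_n$ and the set $A_n$. Choose $i$ with $Y=\{n:i_n=i\}\in\U$.

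Put $H_n=Y\cap\bigcap_{k\le n}A_k$. This is a decreasing sequence in $\U$, since $\U$ is a filter. Selectivity gives $k_0<k_1<\cdots$ with $X=\{k_j\}\in\U$ and $k_{j+1}\in H_{k_j}$. Intersecting with $Y$ we may assume $X\subseteq Y$. Then for $j<l$ we have $k_l\in H_{k_{l-1}}\subseteq A_{k_j}$, so $c(\{k_j,k_l\})=i$, and $X$ is homogeneous.

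\emph{(1) $\Rightarrow$ selective.} First, (1) implies the standard characterisation: every partition of $\omega$ into pieces not in $\U$ has a selector in $\U$. To see this, colour a pair $\{n,m\}$ by whether $n,m$ lie in the same piece. A homogeneous set in $\U$ cannot be of colour ``same'', since pieces are not in $\U$, so it is a selector.

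Second, (1) implies that $\U$ is a P-point. Given a decreasing sequence $H_n\in\U$, which we may assume satisfies $\bigcap_n H_n=\emptyset$, partition $\omega$ into the pieces $H_n\setminus H_{n+1}$ (plus $\omega\setminus H_0$). A selector in $\U$ meets each piece in at most one point, hence is almost contained in each $H_n$.

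From these two facts, construct the fast sequence with the usual interval argument. Take $Z\in\U$ with $Z\subseteq^* H_n$ for all $n$, and let $g(n)$ be a bound past which $Z\subseteq H_n$. Build intervals $[a_j,a_{j+1})$ with $a_{j+1}>g(\max[0,a_j))$. One of the two unions of alternate intervals lies in $\U$; intersect it with $Z$ and take a selector of the resulting interval partition lying in $\U$. Consecutive chosen points are then separated by a full interval, which gives $k_{j+1}\in H_{k_j}$. I would do this part carefully, but it is routine.

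\emph{Selective $\Rightarrow$ (2).} This is the main obstacle. I would follow Mathias: run the Galvin--Prikry/Ellentuck argument with $\U$-trees or $\U$-restricted Mathias conditions $(s,A)$ with $A\in\U$ below. First prove that open sets are completely $\U$-Ramsey, by the combinatorial ``accept/reject'' diagonalization; here selectivity replaces the fusion step by diagonalizing along a decreasing sequence in $\U$. Next show that completely $\U$-Ramsey sets are closed under complements and countable unions, and that $\U$-Ramsey-null sets form a $\sigma$-ideal, again by selective diagonalization. Finally handle analytic sets. Write the set as the projection of a closed set and use a Suslin-scheme unfolding argument, or the $\U$-Mathias forcing/absoluteness argument: a $\U$-generic real is Mathias generic over the ground model, and $\Sigma^1_1$ absoluteness then transfers homogeneity.

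(2) $\Rightarrow$ (1) is immediate, since the set $\{x\in\ramseysp: c(\{x(0),x(1)\})=0\}$ is clopen, and a homogeneous $X\in\U$ for it gives a $c$-homogeneous set. Since the theorem is cited from Mathias, I would present the analytic step as an invocation of \cite[Theorem~9.31]{mathias1977happy}, proving only the combinatorial equivalence in detail.
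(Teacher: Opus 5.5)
Your proposal is correct and agrees with the paper, which gives no argument of its own and simply cites Mathias. Your self-contained proofs of selective $\Leftrightarrow$ (1) (via the selector, P-point and alternating-interval steps) and of (2) $\Rightarrow$ (1) are sound, and for selective $\Rightarrow$ (2) you rely on the same citation.

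One caution about your optional forcing sketch: for $\U$ in the ground model, an $\mathbb{M}_{\U}$-generic real is never Mathias generic over $V$, since its induced filter misses the dense set $\{(s,A):A\notin\U\}\in V$. So that route must instead use $\mathbb{M}_{\U}$-genericity over a countable elementary submodel containing $\U$, or you should stick to the $\U$-Ellentuck/Suslin-operation route you list first.
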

	Note that for a coideal $ \idl{H} $, a subset $ \idl{X}\subseteq \ramseysp $ is called \textbf{completely $ \idl{H} $-Ramsey} if for any cube $ [s,S] :=\{X\in \ramseysp: s\subseteq X\subseteq s\cup S\}$ where $ \max s <\min S $ and $ S\in \idl{H} $, there is a subset $ T\subseteq S$ such that $ T\in \idl{H} $ and either $ [s,T]\subseteq \idl{X} $ or $ [s,T]\cap \idl{X} =\emptyset$.
	
	\subsection{Iterated Products}
	
	For each countable ordinal $\alpha < \omega_1$ with $\alpha \ge 1$, fix a non-decreasing sequence $\langle \gamma_n^\alpha : n \in \omega \rangle$ of ordinals less than $\alpha$ such that $\bigcup_{n \in \omega} (\gamma_n^\alpha + 1) = \alpha$ as follows. For successor ordinals $\alpha = \beta+1$ we take $\gamma_n^\alpha = \beta$ for all $n$, and for limit ordinals $\alpha$ we take $\gamma_n^\alpha \geq 1$ to be strictly increasing and cofinal in $\alpha$.
	
	For $ \alpha <\omega_1 $, the sets $S_\alpha \subseteq \omega^{<\omega}$ are defined recursively as follows. Define $ S_0 = \{\emptyset\} $ and for $\alpha \geq 1$, define
	\[
	S_\alpha = \{ n^{\frown} s : n \in \omega,\ s \in S_{\gamma_n^\alpha} \}.
	\]
	
	Note that each $S_\alpha$ is countable and hence can be identified with $\omega$ via a fixed bijection. In particular, $ S_1=\omega $.
	
	For any $X \subseteq S_\alpha$, and for any finite sequence $s \in \omega^{<\omega}$, let
	\[
	X(s) = \{ t \in \omega^{<\omega} : s^{\frown} t \in X \}
	\]
	denote the \textbf{section} of $X$ at $s$. In particular, $X(\emptyset) = X$, and $ X(s) = \{\emptyset\} $ for $ s\in X $. We denote $X(\langle n \rangle) $ by $ X(n)$ for any $ n\in\omega $.
	
	Let 
	\[
	\mathrm{dom}(X) = \{ n \in \omega : X( n ) \neq \emptyset \}
	\]
	be the \textbf{domain} of $ X $, i.e. the set of indices with nonempty section. Note that for $ X\subseteq S_1 = \omega $, we can identify $ \dom(X) = X $, since in this situation $ n\in X $ if and only if $ X(n)\not=\emptyset $.
	
	\begin{defn}
		Let $ \idl{I} $ and $ \idl{J} $ be ideals on $ \omega $. Their \textbf{Fubini product} $ \idl{I}\times\idl{J} $ is the ideal on $ \omega\times\omega $ defined by
		\[
		\idl{I}\times\idl{J}:=\{A\subseteq\omega\times\omega:\{j\in\omega:A(j)\notin\idl{J}\}\in\idl{I}\}.
		\]
		
		For any ideal $ \idl{I} $ and filter $ \idl{F} $, and for each countable ordinal $\alpha < \omega_1$, we define their \textbf{$ \alpha $-th iterated Fubini product} $\idl{I}^\alpha$ and $ \idl{F}^\alpha $ on $S_\alpha$ recursively as follows.
		
		\begin{itemize}
			\item[(1)] For $\alpha = 0$, let
			\[
			\idl{I}^0 :=\{\emptyset\}, \ \idl{F}^0 := \{S_0\}.
			\]
			
			\item[(2)] For $\alpha \ge 1$, define
			\[
			\begin{aligned}
				\idl{I}^\alpha:=&\{X \subseteq S_\alpha: \{ n \in \omega : X(n) \notin \idl{I}^{\gamma_n^\alpha} \} \in \idl{I} \}\\
				\idl{F}^\alpha:=&\{X \subseteq S_\alpha: \{ n \in \omega : X(n) \in \idl{F}^{\gamma_n^\alpha} \} \in \idl{F} \}.
			\end{aligned}
			\]
		\end{itemize}
		
		For example, $ \idl{I}^1 = \idl{I} $ and $ \idl{F}^1  = \idl{F}$.
	\end{defn}	
	
	For our purpose, given a partial function $ F $ as above, we define the graph of $ \idl{N}_F $ by
	\[
	\Sigma_F :=\{(n,m): n\in \dom(F), m < F(n)\},
	\]
	and for any ideal $ \idl{I} $ on $ \dom(F)$, define the ideal $ \idl{I}\times \emptyset $ on $ \Sigma_F $ by
	\[
	\idl{I}\times \emptyset :=\{X\subseteq \Sigma_F: \dom(X)\in \idl{I}\}.
	\]

		\begin{fact}\label{fact:fubini_power_ultrafilter}
		Let $ \U $ be an ultrafilter on $ \omega $. Then $ \U^\alpha $ is an ultrafilter on $ S_\alpha $ for every $ \alpha<\omega_1 $. Consequently, $ ((\U^\alpha)^*)^+=\U^\alpha $.
	\end{fact}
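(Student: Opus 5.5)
We argue by transfinite induction on $\alpha<\omega_1$ that $\U^\alpha$ is an ultrafilter on $S_\alpha$. The second claim is then immediate. If $\U^\alpha$ is an ultrafilter, its dual ideal $(\U^\alpha)^*$ consists exactly of the complements of members of $\U^\alpha$. So $X\in((\U^\alpha)^*)^+$ iff $S_\alpha\setminus X\notin\U^\alpha$, and by the ultrafilter property this holds iff $X\in\U^\alpha$.

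For the base case, $\U^0=\{S_0\}$ with $S_0=\{\emptyset\}$ a singleton, so $\U^0$ is the unique (principal) ultrafilter on a one-point set.

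For the inductive step, let $\alpha\ge 1$ and assume $\U^\beta$ is an ultrafilter on $S_\beta$ for all $\beta<\alpha$, in particular for each $\gamma_n^\alpha$. We check the filter axioms and the ultra property directly from the definition $X\in\U^\alpha\iff\{n:X(n)\in\U^{\gamma_n^\alpha}\}\in\U$. The key observation is that sections commute with Boolean operations:
\[
(X\cap Y)(n)=X(n)\cap Y(n),\qquad (S_\alpha\setminus X)(n)=S_{\gamma_n^\alpha}\setminus X(n),
\]
and $X\subseteq Y$ implies $X(n)\subseteq Y(n)$. These identities hold because $S_\alpha(n)=S_{\gamma_n^\alpha}$. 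From them we verify each property in turn.

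\begin{itemize}
\item \emph{Upward closure.} If $X\subseteq Y$, then the index set for $X$ is contained in the index set for $Y$, and $\U$ is upward closed.
\item \emph{Closure under intersections.} The index set for $X\cap Y$ is the intersection of the two index sets, because each $\U^{\gamma_n^\alpha}$ is a filter; that intersection lies in $\U$.
\item \emph{Contains $S_\alpha$, excludes $\emptyset$.} $S_\alpha\in\U^\alpha$, since every section is the full $S_{\gamma_n^\alpha}$ and the index set is $\omega$. $\emptyset\notin\U^\alpha$, since its index set is empty.
\item \emph{Ultra.} For any $X$, the index sets $A_X$ and $A_{S_\alpha\setminus X}$ partition $\omega$, because each $\U^{\gamma_n^\alpha}$ contains exactly one of $X(n)$ and its complement. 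Since $\U$ is an ultrafilter, exactly one of the two index sets lies in $\U$.
\end{itemize}

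The only delicate point is the bookkeeping of sections. We need $S_\alpha(n)=S_{\gamma_n^\alpha}$ so that the complement inside $S_\alpha$ restricts to the complement inside $S_{\gamma_n^\alpha}$. This follows from the recursive definition $S_\alpha=\{n^\frown s:s\in S_{\gamma_n^\alpha}\}$. No limit-stage subtlety arises, since each stage only refers to the earlier stages $\gamma_n^\alpha<\alpha$.
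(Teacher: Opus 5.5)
Your proof is correct and follows the paper's argument: induction on $\alpha$, with the key step that the induction hypothesis applied to the sections gives $\{n: (S_\alpha\setminus X)(n)\in\U^{\gamma_n^\alpha}\}=\omega\setminus\{n: X(n)\in\U^{\gamma_n^\alpha}\}$, so exactly one of $X$ and $S_\alpha\setminus X$ lies in $\U^\alpha$. You also check the filter axioms explicitly, which the paper's proof leaves implicit, so your version is slightly more complete.
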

	\begin{proof}
		We argue by induction on $ \alpha $. The case $ \alpha=0 $ is immediate. Suppose $ \alpha\geq1 $ and the conclusion holds below $ \alpha $. For $ X\subseteq S_\alpha $, let $ A_X=\{n\in\omega:X(n)\in\U^{\gamma_n^\alpha}\} $. By the induction hypothesis, for every $ n\in\omega $, either $ X(n) \in \U^{\gamma_n^\alpha} $ or $ \lb S_\alpha \setminus X\rb (n) =  S_{\gamma_n^\alpha}\setminus X(n) \in \U^{\gamma_n^\alpha}$. So $ A_{S_\alpha\setminus X}=\omega\setminus A_X $. Since $ \U $ is an ultrafilter, exactly one of $ A_X $ and $ \omega\setminus A_X $ belongs to $ \U $. By the definition of the iterated Fubini product, exactly one of $ X $ and $ S_\alpha\setminus X $ belongs to $ \U^\alpha $. Thus $ \U^\alpha $ is an ultrafilter.
	\end{proof}

	\section{Definable Van Douwen $ \idl{J} $-MED Families}
	
	We use the terminology in \cite{schembecker2026productivity} for the definition of Van Douwen families. 
	
	\begin{defn}
		Let $ \idl{N}_F $ be a product space, and $ \idl{J} $ be an ideal on $ \dom(F) $. For a $ \idl{J} $-e.d. family $ \idl{E}\subseteq \idl{N}_F $, define
		\[
		\idl{I}_0^\idl{J}(\idl{E}) := \prst(\dom(F) ) \setminus \{X\in \idl{J}^+ : \{g\uphar X: g\in \idl{E}\} \text{ is } (\idl{J}\uphar X) \text{-MED in } \idl{N}_{F\uphar X}\}.
		\]
		The family $ \idl{E} $ is called \textbf{Van Douwen $ \idl{J} $-maximal eventually different} (VD $ \idl{J} $-MED for short), if $ \idl{I}_0^\idl{J} (\idl{E}) = \idl{J} $.
	\end{defn}
	
	We first characterize exactly when a finite VD family exists.
	
	\begin{defn}
		Let $ F $ be a partial function, let $ D=\dom(F) $, and let $ \idl{J} $ be an ideal on $ D $. For $ 1\leq k<\omega $, we say that $ F $ is \textbf{$ \idl{J}^* $-constant with value $ k $}, if $\{n\in D:F(n)= k\}\in\idl{J}^*$. We say that $ F $ is \textbf{$ \idl{J}^* $-constant}, if it is $ \idl{J}^* $-constant with value $ k $ for some $ 1\leq k<\omega $.
	\end{defn}
	
	\begin{prop}\label{prop:jstar_const_fin_vd}
		Let $ F $ be a partial function from a countable set $ S $ to $ (\omega+1)\setminus\{0\} $ such that $D = \dom(F) $ is infinite, as assumed. Let $ \idl{J} $ be an ideal on $ D $. For every $ 1\leq k<\omega $, the following are equivalent.
		\begin{enumerate}[label=(\arabic*),font=\upshape]
			\item There is a VD $ \idl{J} $-MED family in $ \prodspc{F} $ of size $ k $.
			\item The function $ F $ is $ \idl{J}^* $-constant with value $ k $.
		\end{enumerate}
	\end{prop}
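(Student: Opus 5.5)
The plan is to prove both implications directly from the definitions. The reverse direction uses an explicit construction. The forward direction splits $D$ into the parts where $F$ is larger than, smaller than, or equal to $k$.

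For (2)$\Rightarrow$(1), let $A=\{n\in D:F(n)=k\}\in\idl{J}^*$. For $i<k$ define $f_i\in\prodspc{F}$ by $f_i(n)=i$ for $n\in A$ and $f_i(n)=0$ for $n\in D\setminus A$; this is legitimate since $F(n)\geq 1$ everywhere.
\begin{itemize}
\item The $f_i$ are pairwise distinct, because $A\neq\emptyset$ (as $D\notin\idl{J}$).
\item For $i\neq j$ we have $\Eq{D}(f_i,f_j)\subseteq D\setminus A\in\idl{J}$, so $\idl{E}=\{f_i:i<k\}$ is a $\idl{J}$-e.d. family of size $k$.
\end{itemize}
For Van Douwen maximality, fix $X\in\idl{J}^+$ and $h\in\prodspc{F\uphar X}$.
\begin{itemize}
\item Since $X\setminus A\in\idl{J}$, the set $X\cap A$ is $\idl{J}$-positive.
\item The set $X\cap A$ is the union of the $k$ sets $\{n\in X\cap A:h(n)=i\}$, $i<k$, because $h(n)<F(n)=k$ there.
\item As $\idl{J}$ is closed under finite unions, one of these sets is $\idl{J}$-positive. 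It is contained in $\Eq{X}(h,f_i\uphar X)$.
\end{itemize}
The restrictions $f_i\uphar X$ remain pairwise $(\idl{J}\uphar X)$-eventually different, since their equality sets lie inside $X\setminus A$. Hence $\{f_i\uphar X:i<k\}$ is $(\idl{J}\uphar X)$-MED, so $X\notin\idl{I}^{\idl{J}}_0(\idl{E})$. Conversely, every $X\in\idl{J}$ lies in $\idl{I}^{\idl{J}}_0(\idl{E})$ by definition. Thus $\idl{I}^{\idl{J}}_0(\idl{E})=\idl{J}$.

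For (1)$\Rightarrow$(2), let $\idl{E}=\{f_0,\dots,f_{k-1}\}$ be VD $\idl{J}$-MED with the $f_i$ pairwise distinct. It suffices to show that $B=\{n\in D:F(n)>k\}$ and $C=\{n\in D:F(n)<k\}$ both belong to $\idl{J}$; here $B$ includes the points with $F(n)=\omega$.
\begin{itemize}
\item \emph{$B\in\idl{J}$.} Suppose instead $B\in\idl{J}^+$. For each $n\in B$ the set $F(n)\setminus\{f_0(n),\dots,f_{k-1}(n)\}$ is nonempty, so pick $h(n)$ in it. Then $\Eq{B}(h,f_i\uphar B)=\emptyset\in\idl{J}$ for every $i$. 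So $\{f\uphar B:f\in\idl{E}\}$ is not $(\idl{J}\uphar B)$-MED, i.e. $B\in\idl{I}_0^{\idl{J}}(\idl{E})\setminus\idl{J}$, contradicting Van Douwen maximality.
\item \emph{$C\in\idl{J}$.} For each $n\in C$, the $k$ values $f_0(n),\dots,f_{k-1}(n)$ lie in $F(n)<k$. By pigeonhole there are $i<j<k$ with $f_i(n)=f_j(n)$. Hence $C\subseteq\bigcup_{i<j<k}\Eq{D}(f_i,f_j)$, a finite union of members of $\idl{J}$, so $C\in\idl{J}$.
\end{itemize}
Therefore $\{n:F(n)=k\}=D\setminus(B\cup C)\in\idl{J}^*$.

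There is no real obstacle in this argument. The only points needing care are, first, that $F(n)=\omega$ is absorbed into the case $F(n)>k$. Second, the definition of $\idl{I}^{\idl{J}}_0$ must be applied only to $\idl{J}$-positive sets; this is why $B$ is first assumed positive before deriving a contradiction. Third, the pigeonhole step for $C$ needs the size-$k$ family to consist of $k$ distinct functions, which holds by definition of size.
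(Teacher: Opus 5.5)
Your proof is correct and is essentially the paper's argument. For (2)$\Rightarrow$(1) you use the same functions $f_i$ (equal to $i$ on the $\idl{J}^*$-set where $F=k$ and $0$ elsewhere), and for (1)$\Rightarrow$(2) you use the same two ingredients: Van Douwen maximality applied to a positive set where some value of $F(n)$ is missed by every $f_i$, and pigeonhole against the $\idl{J}$-small sets $\Eq{D}(f_i,f_j)$. The differences are only bookkeeping. You sort points by $F(n)>k$ versus $F(n)<k$, while the paper uses $B_1$ (some value unused) and $B_0=\bigcup_{i<j<k}\Eq{D}(f_i,f_j)$. You also make explicit the distinctness of the $f_i$ and the eventual difference of the restrictions, which the paper leaves implicit.
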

	\begin{proof}
		\phantom{a}
		
		\begin{itemize}
			\item[(1)] If $ \idl{E}=\{f_i:i<k\} $ is a VD $ \idl{J} $-MED family. Since $ \idl{E} $ is $ \idl{J} $-e.d., we have
			\[
			B_0:=\bigcup_{i<j<k}\Eq{D}(f_i,f_j)\in\idl{J}.
			\]
			Let
			\[
			B_1:=\{n\in D:F(n)\setminus\{f_i(n):i<k\}\neq\emptyset\}.
			\]
			We claim that $ B_1\in\idl{J} $. Otherwise, define $ h\in\prodspc{F\uphar B_1} $ by $h(n):=\min(F(n)\setminus\{f_i(n):i<k\})$ for every $ n\in B_1 $. Then for any $ i<k $ we have $\Eq{B_1}(h,f_i)=\emptyset$, contradicting the VD $ \idl{J} $-maximality of $ \idl{E} $.
			
			Now we have that $ D\setminus(B_0 \cup B_1) \in \idl{J}^* $. For every $ n\in D\setminus(B_0 \cup B_1) $, we have that $\forall i<j,  \ f_i (n)\not= f_j (n)$, and $ F(n) \setminus\{f_i(n):i<k\} =\emptyset$. So $ F(n)=k $, and then
			\[
			\idl{J}^* \ni D\setminus(B_0\cup B_1)\subseteq \{n\in D:F(n)= k\},
			\]
			i.e. $ F  $ is $ \idl{J}^* $-constant with value $ k $.
			\item[(2)] Conversely, suppose that $ C:=\{n\in D:F(n)=k\} $ has $ D\setminus C\in\idl{J} $. For every $ i<k $, define $ f_i\in\prodspc{F} $ by
			\[
			f_i(n)=
			\begin{cases}
				i,&n\in C,\\
				0,&n\in D\setminus C.
			\end{cases}
			\]
			Note that $ F(n)>0 $ for every $ n\in D $. 
			
			\begin{itemize}
				\item[i.] For distinct $ i,j<k $, we have $ \Eq{D}(f_i,f_j)\subseteq D\setminus C\in\idl{J} $, so $ \{f_i:i<k\} $ is $ \idl{J} $-e.d..
				\item[ii.] To show VD maximality, let $ X\in\idl{J}^+ $ and $ h\in\prodspc{F\uphar X} $. We have $ X\cap C\in\idl{J}^+ $. Note that
				\[
				X\cap C=\bigsqcup_{i<k}\{n\in X\cap C:h(n)=i\}\in \idl{J}^+.
				\]
				So there is some $ i<k $ such that $ \{n\in X\cap C:h(n)=i\}\in \idl{J}^+ $, which means $ \Eq{X}(h,f_i)\in \idl{J}^+ $. Thus the family $ \{f_i:i<k\} $ is VD $ \idl{J} $-MED.
			\end{itemize}
		\end{itemize}
	\end{proof}
	
	\begin{defn}
		Let $ F $ be a partial function, let $ D=\dom(F) $, and let $ \idl{J} $ be an ideal on $ D $. We say that $ F $ \textbf{$ \idl{J}^* $-converges to $ \omega $}, if $ \{n\in D:F(n)>k\}\in\idl{J}^* $ for every $ k\in\omega $.
	\end{defn}
	
	\begin{prop}\label{prop:infinite_jed_convergence}
		Let $ F $ be a partial function, let $ D=\dom(F) $, and let $ \idl{J} $ be an ideal on $ D $. If there is an infinite $ \idl{J} $-e.d. family in $ \prodspc{F} $, then $ F $ $ \idl{J}^* $-converges to $ \omega $.
	\end{prop}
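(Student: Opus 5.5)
We argue by contraposition combined with a pigeonhole argument, in the spirit of part (1) of the proof of Proposition~\ref{prop:jstar_const_fin_vd}. Suppose $\idl{E}\subseteq\prodspc{F}$ is an infinite $\idl{J}$-e.d. family, and fix $k\in\omega$. We must show that the set $A_k:=\{n\in D:F(n)\leq k\}$ belongs to $\idl{J}$, since then $\{n\in D:F(n)>k\}=D\setminus A_k\in\idl{J}^*$.

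Since $\idl{E}$ is infinite, pick $k+1$ distinct functions $f_0,\dots,f_k\in\idl{E}$. Because $\idl{E}$ is $\idl{J}$-e.d. and $\idl{J}$ is closed under finite unions, we have
\[
B:=\bigcup_{i<j\leq k}\Eq{D}(f_i,f_j)\in\idl{J}.
\]
Now take any $n\in A_k$. The $k+1$ values $f_0(n),\dots,f_k(n)$ all lie in $F(n)$, which has at most $k$ elements. By the pigeonhole principle, two of these values coincide, so $n\in\Eq{D}(f_i,f_j)$ for some $i<j\leq k$, and hence $n\in B$. This shows $A_k\subseteq B$, and since ideals are closed under subsets, $A_k\in\idl{J}$, as required.

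There is no real obstacle in this argument. The only point to check is that $F(n)\leq k$ really does mean that $F(n)$, viewed as the ordinal $\{0,\dots,F(n)-1\}$, has at most $k$ elements. This holds because $F(n)\in(\omega+1)\setminus\{0\}$, and the value $\omega$ is excluded by the assumption $F(n)\leq k$.
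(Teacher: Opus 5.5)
Your proof is correct and is essentially the paper's own argument: pick $k+1$ distinct members of $\idl{E}$, apply the pigeonhole principle on $\{n\in D:F(n)\leq k\}$, and conclude that this set is contained in $\bigcup_{i<j\leq k}\Eq{D}(f_i,f_j)\in\idl{J}$. The only difference is that the paper handles $k=0$ separately. Your uniform argument covers that case only vacuously, because $\{n:F(n)\leq 0\}$ is empty (as $F(n)>0$), not because two of the values coincide.
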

	\begin{proof}
		Let $ \idl{E}\subseteq\prodspc{F} $ be an infinite $ \idl{J} $-e.d. family. For any $ k>0 $ define $ X_k:=\{n\in D:F(n)\leq k\}$. Choose distinct functions $ f_i\in\idl{E} $ for $ i\leq k $. For every $ n\in X_k $, there are $ k+1 $ values $ f_0(n),\ldots,f_k(n) $ in the set $ F(n) $ of size at most $ k $. So there will be some $ i,j\leq k $ such that $ f_i(n) = f_j(n) $, i.e.
		\[
		X_k\subseteq\bigcup_{i<j\leq k}\Eq{D}(f_i,f_j)\in\idl{J},
		\]
		So $ X_k\in\idl{J} $ for every $ 1\leq k<\omega $. The case $ k=0 $ is trivial since $ F(n)>0 $ for every $ n\in D $.
	\end{proof}
	
	\begin{prop}\label{prop:vdj_jtimesempty}
		Let $ F $ be a partial function and $ \idl{J} $ be an ideal on $ \dom(F) $. Let $ \idl{E} $ be a subset of $ \prodspc{F} $. The following are equivalent.
		\begin{enumerate}[label=(\arabic*),font=\upshape]
			\item $\idl{E}$ is a VD $ \idl{J} $-MED family in $ \prodspc{F} $.
			\item The continuous image $\gr''\idl{E}$ of $ \idl{E} $ under the function $\gr: \prodspc{F} \to \prst(\Sigma_F) $ is a $ \idl{J}\times \emptyset $-MAD family on $ \Sigma_F $.
		\end{enumerate}
	\end{prop}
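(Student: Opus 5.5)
The plan is to translate each clause of the definitions directly through the graph map. For $f,g\in\prodspc{F}$ we have $\gr(f)\cap\gr(g)=\{(n,f(n)):n\in\Eq{D}(f,g)\}$, where $D=\dom(F)$, and hence
\[
\dom(\gr(f)\cap\gr(g))=\Eq{D}(f,g).
\]
From this identity, $\gr(f)\cap\gr(g)\in\idl{J}\times\emptyset$ iff $\Eq{D}(f,g)\in\idl{J}$. Also $\dom(\gr(f))=D\notin\idl{J}$, so every graph lies in $(\idl{J}\times\emptyset)^+$. The map $\gr$ is injective, since $D$ is infinite and distinct functions have distinct graphs. It follows that $\idl{E}$ is $\idl{J}$-e.d. iff $\gr''\idl{E}$ is $\idl{J}\times\emptyset$-a.d. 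What remains is to match VD maximality with MAD maximality.

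For (1)$\Rightarrow$(2), take $Y\in(\idl{J}\times\emptyset)^+$, so $X:=\dom(Y)\in\idl{J}^+$. Choose a selector $h\in\prodspc{F\uphar X}$ with $(n,h(n))\in Y$ for every $n\in X$; this uses $Y(n)\neq\emptyset$ and $Y(n)\subseteq F(n)$, and the choice is countable and explicit, e.g.\ $h(n)=\min Y(n)$. By VD maximality there is $f\in\idl{E}$ with $\Eq{X}(h,f)\in\idl{J}^+$. For $n$ in that set, $(n,f(n))=(n,h(n))\in Y\cap\gr(f)$, so $\dom(Y\cap\gr(f))\supseteq\Eq{X}(h,f)$. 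This set is positive, because $\idl{J}\uphar X$-positivity coincides with $\idl{J}$-positivity for subsets of $X$.

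For (2)$\Rightarrow$(1), take $X\in\idl{J}^+$ and $h\in\prodspc{F\uphar X}$. Put $Y=\gr(h)\subseteq\Sigma_F$, which has $\dom(Y)=X\in\idl{J}^+$. MADness gives $f\in\idl{E}$ with $\gr(f)\cap Y\in(\idl{J}\times\emptyset)^+$. Its domain is exactly $\Eq{X}(h,f\uphar X)$, so this set lies in $\idl{J}^+$. This says precisely that $\{g\uphar X:g\in\idl{E}\}$ is $\idl{J}\uphar X$-MED. For that we also need that the restricted family is $\idl{J}\uphar X$-e.d., which follows from $\Eq{X}(f\uphar X,g\uphar X)\subseteq\Eq{D}(f,g)\in\idl{J}$. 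One subtlety must be checked: two distinct $f,g$ may have equal restrictions to $X$. In that case $X\subseteq\Eq{D}(f,g)\in\idl{J}$, which is impossible, so the restricted family is e.d. in the sense of the definition. Consequently $\idl{I}_0^{\idl{J}}(\idl{E})\subseteq\idl{J}$. The reverse inclusion holds trivially, since every set in $\idl{J}$ is removed from the complement by definition. Hence $\idl{I}_0^{\idl{J}}(\idl{E})=\idl{J}$.

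Continuity of $\gr$ is immediate coordinatewise, because $(n,m)\in\gr(f)$ iff $f(n)=m$. There is no real obstacle here. The only points needing care are the bookkeeping between $\idl{J}\uphar X$ and $\idl{J}$ and the injectivity of restriction noted above. Neither needs any earlier result beyond the definitions.
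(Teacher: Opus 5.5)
Your proposal is correct and follows the paper's proof essentially step for step. The identity $\dom(\gr(f)\cap\gr(g))=\Eq{D}(f,g)$ handles positivity and almost disjointness, the selector $h(n)=\min Y(n)$ turns VD maximality into MAD maximality, and $Y=\gr(h)$ gives the converse. Your extra checks (injectivity of $\gr$, eventual difference of the restricted family, and the automatic inclusion $\idl{J}\subseteq\idl{I}_0^{\idl{J}}(\idl{E})$) are bookkeeping the paper leaves implicit; the appeal to $D$ being infinite for injectivity of $\gr$ is unnecessary.
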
	
	\begin{proof}
		\phantom{a}
		
		\begin{itemize}
			\item[(1)] Let $\idl{E}$ be VD $ \idl{J} $-MED. 
			\begin{itemize}
				\item[i.] For every $ f\in \idl{E} $, we have $ \dom(\gr(f)) = \dom(F)\notin \idl{J} $ by properness, so $ \gr''\idl{E}\subseteq (\idl{J}\times \emptyset)^+ $.
				\item[ii.] We show that $ \gr''\idl{E} $ is $\idl{J}\times\emptyset$-almost disjoint. For distinct $ f, g\in \idl{E} $, we have that
				\[
				\dom(\gr(f)\cap \gr(g)) = \Eq{\dom(F)}(f,g),
				\]
				which is in $ \idl{J} $ as $ \idl{E} $ is $ \idl{J} $-e.d.. So $ \gr''\idl{E} $ is $ \idl{J}\times \emptyset $-a.d..
				\item[iii.] To show maximality, for any $ X\in (\idl{J}\times\emptyset)^+ $, define $ h_X \in \prodspc{F\uphar \dom(X)} $ by $ h_X(n) = \min X(n) $ for any $ n\in\dom(X) $. Since \(\idl{E}\) is VD $\idl{J}$-MED, there exists $f\in\idl{E}$ such that $\Eq{\dom(X)}(h_X,f)\in\idl{J}^{+}$. As
				\[
				\Eq{\dom(X)}(h_X,f) \subseteq \dom(X\cap\gr(f)),
				\]
				we have that $ \dom(X\cap\gr(f))\in\idl{J}^+ $. So $ \gr''\idl{E} $ is $ \idl{J}\times\emptyset $-maximal.
			\end{itemize}
			This proves that $ \gr''\idl{E} $ is a $\idl{J}\times\emptyset$-MAD family on $\Sigma_F$.
			\item[(2)] Conversely,  assume that $ \gr''\idl{E} $ is $ \idl{J}\times\emptyset $-MAD.
			\begin{itemize}
				\item[i.] To prove $ \idl{J} $-eventual difference, for distinct $ f,g\in\idl{E} $, since $ \gr''\idl{E} $ is $ \idl{J}\times\emptyset $ -a.d., we have $ \gr(f)\cap\gr(g)\in\idl{J}\times\emptyset $. So $ \Eq{\dom(F)}(f,g)=\dom(\gr(f)\cap\gr(g))\in\idl{J} $.
				\item[ii.] To show it's VD $ \idl{J} $-maximal, let $ X\in\idl{J}^+ $ and $ h\in \prodspc{F\uphar X} $. We have that $ \gr(h) = \{(n, h(n)): n\in X\}\in (\idl{J}\times \emptyset )^+ $. By the maximality of $ \gr''\idl{E} $, there exists $ f\in \idl{E} $ such that $ \gr(h) \cap \gr(f)\in (\idl{J}\times \emptyset )^+$. So
				\[
				\Eq{X}(h,f) = \dom(\gr(h)\cap \gr(f))\in\idl{J}^+.
				\]
			\end{itemize}
			This shows that $ \idl{E} $ is VD $ \idl{J} $-MED.
		\end{itemize}
	\end{proof}
	
	We now recall the notion of an almost disjoint refinement with respect to an ideal. We use the terminology in \cite{farkas2018almost}.
	
	\begin{defn}
		Let $ \idl{J} $ be an ideal on a countable set $ S $, and let $ \idl{H}\subseteq\idl{J}^+ $. A family
		\[
		\idl{A}=\{A_H:H\in\idl{H}\}
		\]
		is called a \textbf{$ \idl{J} $-almost disjoint refinement} ($ \idl{J} $-ADR for short) of $ \idl{H} $, if
		\begin{enumerate}[label=(\arabic*),font=\upshape]
			\item $ A_H\subseteq H $ and $ A_H\in\idl{J}^+ $ for every $ H\in\idl{H} $;
			\item $ A_H\cap A_K\in\idl{J} $ for all distinct $ H,K\in\idl{H} $.
		\end{enumerate}
	\end{defn}
	
	\begin{defn}
		Let $ \idl{J} $ be an ideal on a countable set $ S $.
		\begin{enumerate}[label=(\arabic*),font=\upshape]
			\item We say that $ \idl{J} $ has the \textbf{countable $ \idl{J} $-almost disjoint refinement property} $ \idl{J}$-$\operatorname{ADR}_{\omega} $, if every countable subfamily of $ \idl{J}^+ $ has a $ \idl{J} $-ADR.
			\item We say that $ \idl{J} $ has the \textbf{countable disjoint refinement property} $ \operatorname{DR}_{\omega} $, if for every sequence $ \langle X_n:n\in\omega\rangle $ in $ \idl{J}^+ $, there is a sequence $ \langle Y_n:n\in\omega\rangle $ in $ \idl{J}^+ $ such that $ Y_n\subseteq X_n $ for every $ n\in\omega $, and the sets $ Y_n $ are pairwise disjoint.
		\end{enumerate}
	\end{defn}
	
	Note that for $ \idl{J} $ containing all finite subsets of the domain, the $ \idl{J}$-$\operatorname{ADR}_{\omega} $ property is equivalent to its sequence version, which is furthermore equivalent to the quotient Boolean algebra $ \prst(S)/\idl{J} $ having $ \operatorname{Rp}(\omega) $ in the sense of \cite{balcar1989disjoint}.
	
	\begin{fact}\label{fact:adr_w_iff_de_w}
		Let $ \idl{J} $ be an ideal on a countable set $ S $ containing all finite subsets of $ S $. Then $ \idl{J} $ has $ \idl{J} $-$ \operatorname{ADR}_{\omega} $ if and only if $ \idl{J} $ has $ \operatorname{DR}_{\omega} $.
	\end{fact}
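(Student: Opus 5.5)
Both directions are direct constructions. The direction from $\operatorname{DR}_{\omega}$ to $\idl{J}$-$\operatorname{ADR}_{\omega}$ is trivial. Given a countable subfamily $\idl{H}=\{X_n:n\in\omega\}\subseteq\idl{J}^+$, enumerated injectively (or possibly finite, in which case pad the sequence by repetition and use only the first occurrences), $\operatorname{DR}_{\omega}$ gives pairwise disjoint $Y_n\subseteq X_n$ with $Y_n\in\idl{J}^+$. Put $A_{X_n}:=Y_n$. Then $A_{X_n}\cap A_{X_m}=\emptyset\in\idl{J}$ for $n\neq m$, so this is a $\idl{J}$-ADR. This direction does not need $\idl{J}\supseteq\finsub{S}$.

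For the converse, assume $\idl{J}$-$\operatorname{ADR}_{\omega}$ and let $\langle X_n:n\in\omega\rangle$ be a sequence in $\idl{J}^+$. The sequence may have repetitions, while the refinement property is stated for families, so I first pass to distinct members. For each $n$, replace $X_n$ by $X_n':=X_n\times\{n\}$, viewed back in $S$: concretely, split each $X_n$ into infinitely many positive pieces. Sets in $\idl{J}^+$ are infinite because $\idl{J}\supseteq\finsub{S}$, but an infinite set need not split into positive pieces, so this does not work directly. The better trick is to use $\idl{J}$-ADR on the family $\{X_n\}$ after shrinking. Apply $\idl{J}$-$\operatorname{ADR}_{\omega}$ to the countable family $\{X_n:n\in\omega\}$ (as a set) to get $A_n\subseteq X_n$ positive and pairwise $\idl{J}$-almost disjoint for distinct sets. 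For repeated values $X_n=X_m$, I instead first obtain infinitely many pairwise almost disjoint positive subsets of a single $X\in\idl{J}^+$. To do this, note that $\idl{J}$-$\operatorname{ADR}_\omega$ applied to the family $\{X\setminus F: F\in\finsub{X}\}$ would work, except that these sets are all $=_{\idl{J}}X$. Their refinements are pairwise almost disjoint positive subsets of $X$, and there are infinitely many such sets whenever $X$ is infinite, which it is. So each positive $X$ contains infinitely many pairwise $\idl{J}$-a.d. positive subsets, and this handles repetitions: to $n$ assign the $n$-th such subset of $X_n$ inside the refinement obtained for its value.

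So WLOG we have $A_n\subseteq X_n$, $A_n\in\idl{J}^+$, with $A_n\cap A_m\in\idl{J}$ for $n\neq m$. Disjointify by setting
\[
Y_n:=A_n\setminus\bigcup_{m<n}A_m .
\]
These are pairwise disjoint and $Y_n\subseteq X_n$. Moreover $A_n\setminus Y_n=\bigcup_{m<n}(A_n\cap A_m)\in\idl{J}$ as a finite union, so $Y_n\in\idl{J}^+$. This gives $\operatorname{DR}_{\omega}$.

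The main obstacle is the repetition issue, i.e. producing infinitely many pairwise $\idl{J}$-a.d. positive subsets of one positive set. This is exactly where $\finsub{S}\subseteq\idl{J}$ is used: it makes the family $\{X\setminus F:F\in\finsub{X}\}$ countably infinite and contained in $\idl{J}^+$. I would check this step carefully.
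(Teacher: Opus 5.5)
Your proof is correct and essentially follows the paper's. The direction from $\operatorname{DR}_{\omega}$ to $\idl{J}$-$\operatorname{ADR}_{\omega}$ is identical, and for the converse you use the same two ingredients: removing finite sets (allowed because $\finsub{S}\subseteq\idl{J}$) to turn repetitions into distinct members of a family, and then disjointifying via $Y_n=A_n\setminus\bigcup_{m<n}A_m$.

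The paper handles repetitions in one pass, choosing finite $s_n\subseteq X_n$ so that the sets $X_n\setminus s_n$ are pairwise distinct and applying $\idl{J}$-$\operatorname{ADR}_{\omega}$ once. Your two-stage version also works, provided, as you indicate, that the splitting into infinitely many almost disjoint positive pieces is done inside the refinement set for each value rather than inside $X_n$ itself; that is what makes pieces for different values almost disjoint.
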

	\begin{proof}
		\phantom{a}
		
		\begin{itemize}
			\item[(1)] We first show that (1) implies (2). Let $ \langle X_n:n\in\omega\rangle $ be a sequence in $ \idl{J}^+ $. To prevent repetition, we recursively choose finite sets $ s_n\subseteq X_n $ such that the sets $ H_n:=X_n\setminus s_n \in \idl{J}^+ $ are pairwise distinct. Let $ \{Z_n:n\in\omega\} $ be a $ \idl{J} $-ADR of the countable family $ \{H_n:n\in\omega\} $, where $ Z_n\subseteq H_n $. Define $ Y_n:=Z_n\setminus\bigcup_{i<n}Z_i $, which are pairwise disjoint, and we have that $ Y_n \subseteq X_n, \ Y_n \in \idl{J}^+ $ for all $ n\in \omega $. So $ \langle Y_n:n\in\omega\rangle $ witnesses $ \operatorname{DR}_{\omega} $.
			\item[(2)] Conversely, supposing that (2) holds, we show (1). Let $ \idl{H}\subseteq\idl{J}^+ $ be countable. If $ \idl{H}=\emptyset $, then it is trivial. If not, choose a sequence $ \langle H_n:n\in\omega\rangle $ in $ \idl{H} $ such that every element of $ \idl{H} $ occurs in the sequence. Let $ \langle Y_n:n\in\omega\rangle $ witness $ \operatorname{DR}_{\omega} $ for this sequence. For every $ H\in\idl{H} $, let $ n_H $ be the least $ n $ such that $ H_n=H $, and define $ A_H:=Y_{n_H} $. Then $ \{A_H:H\in\idl{H}\} $ is a $ \idl{J} $-ADR of $ \idl{H} $.
		\end{itemize}
	\end{proof}
	
	The following lemma is immediate from \cite[Proposition~1.9]{farkas2018almost}, but we state it for our convenience.
	
	\begin{lem}\label{lem:coanalytic_dr_omega}
		Let $ \idl{J} $ be a coanalytic ideal on a countable set $ S $ such that $ \finsub{S}\subseteq\idl{J} $. Then $ \idl{J} $ has $ \idl{J}$-$\operatorname{ADR}_{\omega} $, and hence it has $ \operatorname{DR}_{\omega} $.
	\end{lem}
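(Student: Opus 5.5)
Since the statement is attributed to \cite[Proposition~1.9]{farkas2018almost}, the plan is to reconstruct the argument directly. By Fact~\ref{fact:adr_w_iff_de_w}, the hypothesis $\finsub{S}\subseteq\idl{J}$ means it suffices to prove $\operatorname{DR}_{\omega}$. The ``hence'' part is then exactly that fact.

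Fix a sequence $\langle X_n:n\in\omega\rangle$ in $\idl{J}^+$. The key tool is the coanalytic (hence Baire property) hypothesis, applied in the form of Talagrand/Jalali-Naini–Sierpiński style arguments. Because $\idl{J}\supseteq\finsub{S}$ and $\idl{J}$ has the Baire property, $\idl{J}$ is meager. Moreover, for each $n$ the restriction $\idl{J}\uphar X_n$ is a coanalytic ideal on the infinite set $X_n$ containing the finite sets; it is proper since $X_n\in\idl{J}^+$. So it is also meager in $\prst(X_n)$. By Talagrand's characterization, for each $n$ there is an interval partition $\langle I^n_k:k\in\omega\rangle$ of $X_n$ (intervals with respect to the enumeration of $S\cong\omega$) such that every set containing infinitely many $I^n_k$ is $\idl{J}$-positive.

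Next, diagonalize to build the disjoint refinement. Recursively, along a bookkeeping enumeration of $\omega\times\omega$ in which each $n$ appears infinitely often, pick at stage $j$ (handling index $n$) one block $I^n_{k_j}$ whose minimum exceeds the maxima of all blocks chosen so far. This is possible because the blocks of each partition go to infinity. The chosen blocks are then pairwise disjoint. Let $Y_n$ be the union of the blocks chosen for index $n$. Then $Y_n\subseteq X_n$, the $Y_n$ are pairwise disjoint, and each $Y_n$ contains infinitely many blocks $I^n_k$, so $Y_n\in\idl{J}^+$. This witnesses $\operatorname{DR}_{\omega}$.

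The main obstacle is the Talagrand step. One needs meagerness of $\idl{J}\uphar X_n$ from the coanalytic hypothesis, and then the specific direction of the characterization: a meager ideal (or, dually, the meager filter of complements) admits a partition such that containing infinitely many pieces forces positivity. Concretely, if $\idl{J}\uphar X_n$ is meager, then the upward closure of its complement-filter is meager too. Talagrand's theorem then gives a partition with each set in $\idl{J}\uphar X_n$ missing all but finitely many pieces. Hence any set containing infinitely many pieces is not in $\idl{J}$. Meagerness itself comes from the Baire property: a non-meager ideal with the Baire property would be comeager on some basic open set, and translating by the complement map (which preserves $\idl{J}$ modulo finite) contradicts properness.
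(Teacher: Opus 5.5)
Your argument is correct. It shares the paper's first step: the coanalytic hypothesis is used only to get that each restriction $\idl{J}\uphar X_n$ has the Baire property and is therefore meager, i.e.\ $\idl{J}$ is everywhere meager.

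From there the routes differ. The paper cites \cite[Proposition~1.9]{farkas2018almost} to obtain $\idl{J}$-ADRs of countable families. It then reaches $\operatorname{DR}_\omega$ through the direction of Fact~\ref{fact:adr_w_iff_de_w} that needs $\finsub{S}\subseteq\idl{J}$: remove finite sets to make the $X_n$ distinct, then subtract the earlier $Z_i$.

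You instead reprove the cited step. Talagrand's characterization gives, for each $n$, a partition of $X_n$ into finite intervals such that every set containing infinitely many pieces is positive. Choosing, by bookkeeping, blocks that lie above all previously chosen blocks then yields pairwise disjoint positive $Y_n\subseteq X_n$. So $\operatorname{DR}_\omega$ comes first, and $\operatorname{ADR}_\omega$ follows from the trivial direction of the Fact. The paper's version is shorter; yours is self-contained and directly produces the disjoint refinement that Theorem~\ref{thm:jtimesemptyset_to_j} actually uses.

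One small correction to your sketch of Sierpi\'nski's theorem. The complement map does not preserve $\idl{J}$; it is the finite-modification homeomorphisms that preserve $\idl{J}$. These spread comeagerness from one basic open set to all of $\prst(X_n)$. Only then does the complement homeomorphism give some $A$ with both $A$ and $X_n\setminus A$ in $\idl{J}$, contradicting properness.
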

	\begin{proof}
		For every $ X\in\idl{J}^+ $, the ideal $ \idl{J}\uphar X $ is a coanalytic subset of $ \prst(X) $, so it has Baire property and then $ \idl{J}\uphar X $ is meager in $ \prst(X) $, by a well known result of Sierpi\'nski. This means $ \idl{J} $ is everywhere meager, and by \cite[Proposition~1.9]{farkas2018almost}, every countable subfamily of $ \idl{J}^+ $ has a $ \idl{J} $-ADR. Thus $ \idl{J} $ has $ \idl{J} $-$ \operatorname{ADR}_{\omega} $, and it has $ \operatorname{DR}_{\omega} $ by Fact \ref{fact:adr_w_iff_de_w}.
	\end{proof}
	
	We can now transfer analytic MAD families from $ \idl{J}\times\emptyset $ to $ \idl{J} $. The proof below is motivated by the trace argument for Van Douwen
	families in \cite{raghavan2010van}; see also
	\cite[Proposition~2.6 and Corollary~2.7]
	{schembecker2025van}.
	
	\begin{thm}\label{thm:jtimesemptyset_to_j}
		Let $ F $ be a partial function, let $ D=\dom(F) $, and let $ \idl{J} $ be a coanalytic ideal on $ D $ such that $ \finsub{D}\subseteq\idl{J} $. If there is an infinite analytic $ \idl{J}\times\emptyset $-MAD family on $ \Sigma_F $, then there is an infinite analytic $ \idl{J} $-MAD family on $ D $.
	\end{thm}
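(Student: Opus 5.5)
The plan is to pass to a single ``graph-shaped'' positive subset of $\Sigma_F$ and take traces of the given family on it. Let $\idl{A}$ be an infinite analytic $\idl{J}\times\emptyset$-MAD family on $\Sigma_F$. The basic observation is this. Let $Y\subseteq D$ and $h\in\prodspc{F\uphar Y}$, and put $G=\gr(h)$. Then $\dom$ restricted to subsets of $G$ is a bijection onto $\prst(Y)$. Under this bijection $(\idl{J}\times\emptyset)\uphar G$ corresponds exactly to $\idl{J}\uphar Y$. Consequently, when $G\in(\idl{J}\times\emptyset)^+$, the family
\[
\idl{B}:=\{\dom(A\cap G):A\in\idl{A},\ \dom(A\cap G)\in\idl{J}^+\}
\]
should be a $\idl{J}\uphar Y$-MAD family on $Y$.

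I would verify this in three steps.
\begin{itemize}
\item[i.] Almost disjointness is inherited from $\idl{A}$. Moreover, distinct $A$ with positive trace give distinct traces, since two equal positive traces would not be almost disjoint.
\item[ii.] For maximality, take $Z\subseteq Y$ with $Z\in\idl{J}^+$. Then $G\cap(Z\times\omega)\in(\idl{J}\times\emptyset)^+$, so it meets some $A\in\idl{A}$ in a positive set, and $\dom(A\cap G)\cap Z$ is then positive.
\item[iii.] For definability, the map $A\mapsto\dom(A\cap G)$ is continuous. Since $\idl{J}$ is coanalytic, $\idl{J}^+$ is analytic, so $\idl{B}$ is a continuous image of an analytic set and is therefore analytic.
\end{itemize}
Finally, $\idl{B}$ is converted to a $\idl{J}$-MAD family on $D$ by adding $D\setminus Y$ when this set is $\idl{J}$-positive. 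The result is still analytic, and it is infinite if $\idl{B}$ is.

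The main obstacle is choosing $Y$ and $h$ so that infinitely many members of $\idl{A}$ have $\idl{J}$-positive trace on $\gr(h)$. For this I would use Lemma~\ref{lem:coanalytic_dr_omega}. Fix distinct $A_n\in\idl{A}$ for $n\in\omega$, so that each $\dom(A_n)\in\idl{J}^+$. Apply $\operatorname{DR}_{\omega}$ to the sequence $\langle\dom(A_n):n\in\omega\rangle$ to get pairwise disjoint $Y_n\subseteq\dom(A_n)$ with $Y_n\in\idl{J}^+$. Put $Y=\bigcup_n Y_n$. Since the $Y_n$ are disjoint, we may define $h$ on $Y$ by letting $h(k)$ be any element of $A_n(k)$ for $k\in Y_n$; this is legitimate because $A_n(k)\subseteq F(k)$ is nonempty.

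Then $\gr(h)\cap A_n\supseteq\gr(h\uphar Y_n)$ has domain $Y_n\in\idl{J}^+$. Hence every $A_n$ has positive trace, and $\gr(h)\in(\idl{J}\times\emptyset)^+$. The traces $\dom(A_n\cap\gr(h))$ are pairwise distinct by step~i, so $\idl{B}$ is infinite. If needed, one can first shrink $Y_n$ by removing the $\idl{J}$-small sets $\dom(A_n\cap A_m)$ for $m<n$, but the disjointness already makes this unnecessary.

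Combining the last two paragraphs gives an infinite analytic $\idl{J}$-MAD family on $D$. The point I would check most carefully is that $\operatorname{DR}_{\omega}$, supplied by Lemma~\ref{lem:coanalytic_dr_omega}, is exactly what is needed: it is what lets a single partial function pick up positively many points from each of infinitely many $A_n$ at once.
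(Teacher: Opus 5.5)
Your proposal is correct and is essentially the paper's proof: the paper also uses Lemma~\ref{lem:coanalytic_dr_omega} to obtain pairwise disjoint $\mathcal{J}$-positive $Y_n\subseteq\dom(A_n)$, selects $\min A_n(k)$ on $Y_n$, and takes the $\mathcal{J}$-positive traces $\dom(A\cap G)$ of members of the family on the resulting graph $G$. The only difference is cosmetic. The paper extends the selector by $0$ outside $\bigcup_n Y_n$, so that $G$ is the graph of a total function on $D$ and the trace family is $\mathcal{J}$-MAD on $D$ directly; you instead work on $Y$ and then add $D\setminus Y$ when it is positive, which is equally valid.
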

	\begin{proof}
		Let $ \idl{A}\subseteq\prst(\Sigma_F) $ be an infinite analytic $ \idl{J}\times\emptyset $-MAD family, and fix a sequence $ \langle A_n:n\in\omega\rangle $ of distinct elements in $ \idl{A} $. For every $ n\in\omega $, we have $ \dom(A_n)\in\idl{J}^+ $. By Lemma \ref{lem:coanalytic_dr_omega}, there are pairwise disjoint sets $ Y_n\in\idl{J}^+ $ such that $ Y_n\subseteq\dom(A_n) $ for every $ n\in\omega $. Define $ g\in\prodspc{F} $ by
		\[
		g(k)=
		\begin{cases}
			\min A_n(k),& k\in Y_n,\\
			0,& k\in D\setminus\bigcup_{n\in\omega}Y_n,
		\end{cases}
		\]
		and let $ G=\gr(g) $. 
		
		For any $ A\subseteq\Sigma_F $, define
		\[
		\operatorname{tr}_G(A):=\dom(A\cap G)=\{k\in D:(k,g(k))\in A\}.
		\]
		The function $ \operatorname{tr}_G:\prst(\Sigma_F)\to\prst(D) $ is continuous. Define
		\[
		\idl{B}:=\operatorname{tr}_G''\{A\in\idl{A}:\operatorname{tr}_G(A)\in\idl{J}^+\}.
		\]
		Note that $ \idl{B} $ is analytic.
		
		We show that $ \idl{B} $ is an infinite $ \idl{J} $-MAD family. Note that $ \idl{B}\subseteq \idl{J}^+$.
		\begin{itemize}
			\item[(1)] We show that $ \idl{B} $ is infinite. For every $ n\in\omega $, we have $ Y_n\subseteq\trace_G(A_n) $, so $ \trace_G(A_n)\in\idl{J}^+ $ and then $ \trace_G(A_n)\in\idl{B} $. Also, for distinct $ n,m $ we have that $ \trace_G(A_n)\cap \trace_G(A_m) = \dom(A_n \cap A_m \cap G)\in \idl{J}$, so they are also distinct. 
			\item[(2)] Similarly, for any distinct $ A,A'\in\idl{A} $, we have that $ \trace_G(A)\cap \trace_G(A') = \dom(A \cap A' \cap G)\in \idl{J}$, so $ \idl{B} $ is $ \idl{J} $-a.d.
			\item[(3)] We now show maximality. Let $ X\in \idl{J}^+ $. Then $ \gr(g\uphar X)\in(\idl{J}\times\emptyset)^+ $. By the maximality of $ \idl{A} $, there exists $ A\in\idl{A} $ such that $ \gr(g\uphar X)\cap A $ is positive. As $ \dom(\gr(g\uphar X)\cap A)=X\cap\operatorname{tr}_G(A) $, we have $ X\cap\operatorname{tr}_G(A)\in\idl{J}^+ $. So $ \operatorname{tr}_G(A)\in\idl{B} $ and catches $ X $. This way $ \idl{B} $ is a $ \idl{J}$-MAD family on $ D $.
		\end{itemize}
	\end{proof}
	
	We note that the converse also holds. Suppose $ F $ is a partial function and $ \idl{J} $ is an ideal. Let $ \idl{A} $ be $ \idl{J} $-MAD. Consider
	\[
	\{\{(n,m):n\in A \ \wedge \ m<F(n)\}\subseteq \Sigma_F: A\in \idl{A}\} . 
	\]
	This is a $ \idl{J}\times\emptyset $-MAD family.
	
	As an application, we obtain the following results.
	
	\begin{cor}
		Let $ \idl{J} $ be one of the following ideals:
		\[
		\evendiff_{\mathrm{fin}},\qquad \evendiff,\qquad \finalpha{\alpha}\text{ for }1\leq\alpha<\omega_1.
		\]
		Let $ F $ be a partial function whose domain is the underlying set of $ \idl{J} $. Then there is no infinite analytic $ \idl{J}\times\emptyset $-MAD family on $ \Sigma_F $.
	\end{cor}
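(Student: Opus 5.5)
The plan is to argue by contraposition through Theorem~\ref{thm:jtimesemptyset_to_j}: an infinite analytic $\idl{J}\times\emptyset$-MAD family on $\Sigma_F$ would produce an infinite analytic $\idl{J}$-MAD family on the underlying set of $\idl{J}$. So the corollary reduces to two checks: the ideals listed satisfy the hypotheses of Theorem~\ref{thm:jtimesemptyset_to_j}, and none of them carries an infinite analytic $\idl{J}$-MAD family.

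\textbf{Hypotheses.} Each of $\evendiff_{\mathrm{fin}}$, $\evendiff$ and $\finalpha{\alpha}$ ($1\leq\alpha<\omega_1$) is Borel, hence coanalytic.
\begin{itemize}
\item For $\evendiff$ and $\evendiff_{\mathrm{fin}}$, membership is given by an arithmetical condition: a finite bound on vertical sections, together with containment in finitely many graphs of functions. Containment in finitely many graphs is a countable union over finite tuples of partial functions bounded by a finite set, so it is Borel.
\item For $\finalpha{\alpha}$, we argue by induction on $\alpha$ along the recursive definition. Each stage is a Fubini-type condition quantifying over $\omega$ on Borel sections, so Borelness is preserved, including at limit stages.
\end{itemize}
Each of these ideals also contains all finite subsets of its underlying set. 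For $\finalpha{\alpha}$ this follows by induction, since a finite set has only finitely many nonempty sections and each section is finite. Thus Theorem~\ref{thm:jtimesemptyset_to_j} applies with $D$ the underlying set and any $F$ with that domain.

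\textbf{Nonexistence of $\idl{J}$-MAD families.} The remaining, and main, ingredient is that there is no infinite analytic $\idl{J}$-MAD family for these $\idl{J}$.
\begin{itemize}
\item For $\finalpha{\alpha}$ and the eventually different ideals, I would cite the results of \cite{he2026ramsey}, where this is established. The case $\alpha=1$ is Mathias's theorem \cite{mathias1977happy}.
\item If a self-contained argument were required, I would follow the Mathias method: use the appropriate coideal (for example $(\finalpha{\alpha})^+$) and show it is selective-like enough that analytic sets are completely Ramsey relative to it. From an infinite a.d. family, build a positive set almost disjoint from every member, by diagonalizing against a countable subfamily and applying the Ramsey property to the analytic set of sets that are positive-intersecting with some member.
\end{itemize}
I expect this to be the main obstacle. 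I would therefore rely on the cited theorems rather than reprove them.

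\textbf{Conclusion.} Suppose there were an infinite analytic $\idl{J}\times\emptyset$-MAD family on $\Sigma_F$. Theorem~\ref{thm:jtimesemptyset_to_j} would then yield an infinite analytic $\idl{J}$-MAD family on $D$, contradicting the cited nonexistence results. Hence no such family on $\Sigma_F$ exists.

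This also gives, via Proposition~\ref{prop:vdj_jtimesempty}, that there are no infinite analytic VD $\idl{J}$-MED families, since $\gr$ is continuous and injective and so preserves analyticity and infinitude.
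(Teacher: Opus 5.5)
Your proposal is correct and follows the paper's proof: you contrapose through Theorem~\ref{thm:jtimesemptyset_to_j} and contradict the nonexistence of infinite analytic $\idl{J}$-MAD families from \cite[Corollaries~6.9 and 6.19]{he2026ramsey}. Your explicit check of that theorem's hypotheses (coanalytic, containing $\finsub{D}$), which the paper leaves tacit, is a useful addition. For $\evendiff$ and $\evendiff_{\mathrm{fin}}$, though, the clean justification is that $A$ lies in the ideal iff $\exists m,n\ \forall k\geq n\ |A(k)|\leq m$, which is visibly $F_\sigma$; your ``countable union over finite tuples of partial functions'' is muddled, since the graphs in question come from uncountably many total functions.
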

	\begin{proof}
		If there were an infinite analytic $ \idl{J}\times\emptyset $-MAD family on $ \Sigma_F $, then by Theorem \ref{thm:jtimesemptyset_to_j} there would be an infinite analytic $ \idl{J} $-MAD family. This contradicts \cite[Corollaries~6.9 and 6.19]{he2026ramsey}.
	\end{proof}
	
	\begin{cor}\label{cor:no_analytic_vd_standard_ideals}
		Let $ \idl{J} $ be one of $ \evendiff_{\mathrm{fin}} $, $ \evendiff $, and $ \finalpha{\alpha} $ for $ 1\leq\alpha<\omega_1 $, and let $ F $ be a partial function whose domain is the underlying set of $ \idl{J} $. Then there is no infinite analytic VD $ \idl{J} $-MED family in $ \prodspc{F} $. In particular, there is no infinite closed VD $ \idl{J} $-MED family in $ \prodspc{F} $.
	\end{cor}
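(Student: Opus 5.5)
The plan is to combine Proposition~\ref{prop:vdj_jtimesempty} with the preceding corollary, which rules out infinite analytic $\idl{J}\times\emptyset$-MAD families on $\Sigma_F$ for these ideals. Suppose, towards a contradiction, that $\idl{E}\subseteq\prodspc{F}$ is an infinite analytic VD $\idl{J}$-MED family. By Proposition~\ref{prop:vdj_jtimesempty}, the image $\gr''\idl{E}$ is a $\idl{J}\times\emptyset$-MAD family on $\Sigma_F$.

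Two facts remain to be checked: that $\gr''\idl{E}$ is analytic, and that it is infinite. For analyticity, note that $\gr:\prodspc{F}\to\prst(\Sigma_F)$ is continuous, since whether $(n,m)\in\gr(f)$ depends only on $f(n)$. A continuous image of an analytic set is analytic, so $\gr''\idl{E}$ is analytic. For infinitude, $\gr$ is injective, because distinct functions have distinct graphs. Hence $\gr''\idl{E}$ has the same cardinality as $\idl{E}$ and is infinite.

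This produces an infinite analytic $\idl{J}\times\emptyset$-MAD family on $\Sigma_F$, contradicting the preceding corollary. The ``in particular'' clause follows immediately, since closed subsets of the Polish space $\prodspc{F}$ are analytic. Every step here is routine; the only point needing a moment's care is that $\prodspc{F}$, as a product of countable discrete spaces, is Polish, so that analyticity is meaningful and is preserved under continuous images.
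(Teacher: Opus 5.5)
Your proposal is correct and matches the paper's proof: you assume an infinite analytic VD $\idl{J}$-MED family, apply Proposition~\ref{prop:vdj_jtimesempty} to get that $\gr''\idl{E}$ is $\idl{J}\times\emptyset$-MAD, and use that $\gr$ is injective and continuous to contradict the preceding corollary. Your added remarks on the ``in particular'' clause and on $\prodspc{F}$ being Polish are accurate.
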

	\begin{proof}
		Suppose that $ \idl{E}\subseteq\prodspc{F} $ is an infinite analytic VD $ \idl{J} $-MED family. By Proposition \ref{prop:vdj_jtimesempty}, $ \gr''\idl{E} $ is a $ \idl{J}\times\emptyset $-MAD family on $ \Sigma_F $. Since $ \gr $ is injective and continuous, $ \gr''\idl{E} $ is infinite and analytic, a contradiction.
	\end{proof}
	
	We can summarize the above results as the following.
	
	\begin{thm}\label{thm:vd_layers}
		Let $ F $ be a partial function, let $ D=\dom(F) $, and let $ \idl{J} $ be a coanalytic ideal on $ D $ such that $ \finsub{D}\subseteq\idl{J} $. Then the following hold.
		\begin{enumerate}[label=(\arabic*),font=\upshape]
			\item The function $ F $ is $ \idl{J}^* $-constant if and only if there is a finite VD $ \idl{J} $-MED family in $ \prodspc{F} $.
			\item Suppose that $ F $ $ \idl{J}^* $-converges to $ \omega $ (so $ F $ is not $ \idl{J}^* $-constant). If there is no infinite analytic $ \idl{J} $-MAD family on $ D $, then there is no analytic VD $ \idl{J} $-MED family in $ \prodspc{F} $.
			\item If $ F $ is not $ \idl{J}^* $-constant and does not $ \idl{J}^* $-converge to $ \omega $, then there are no VD $ \idl{J} $-MED families in $ \prodspc{F} $.
			
		\end{enumerate}
	\end{thm}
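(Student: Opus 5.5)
Each of the three items follows from results already proved above; we only need to check that every case is covered and that the hypotheses line up.

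For (1), we apply Proposition \ref{prop:jstar_const_fin_vd} directly. If $F$ is $\idl{J}^*$-constant with value $k$, then item (2) of that proposition produces a VD $\idl{J}$-MED family of size $k$, which is finite. Conversely, let $\idl{E}$ be a finite VD $\idl{J}$-MED family. It is nonempty, because maximality applied to any $h\in\prodspc{F}$ needs a witness in $\idl{E}$, and $\prodspc{F}\neq\emptyset$. So $|\idl{E}|=k$ for some $1\leq k<\omega$, and the implication (1)$\Rightarrow$(2) of Proposition \ref{prop:jstar_const_fin_vd} shows that $F$ is $\idl{J}^*$-constant with value $k$. The hypotheses that $\idl{J}$ is coanalytic and contains the finite sets are not needed here.

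For (2), first observe why $F$ cannot be $\idl{J}^*$-constant. If $\{n:F(n)=k\}\in\idl{J}^*$ and $\{n:F(n)>k\}\in\idl{J}^*$, these two disjoint sets have empty intersection, which would put $\emptyset$ in $\idl{J}^*$. Hence $D\in\idl{J}$, contradicting properness. Now suppose $\idl{E}$ is an analytic VD $\idl{J}$-MED family. By (1) and the preceding remark, $\idl{E}$ is infinite. By Proposition \ref{prop:vdj_jtimesempty}, $\gr''\idl{E}$ is a $\idl{J}\times\emptyset$-MAD family on $\Sigma_F$. Since $\gr$ is continuous and injective, this family is analytic and infinite. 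Theorem \ref{thm:jtimesemptyset_to_j} then yields an infinite analytic $\idl{J}$-MAD family on $D$, which contradicts the hypothesis of (2).

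For (3), suppose a VD $\idl{J}$-MED family $\idl{E}$ exists. If $\idl{E}$ is finite, (1) makes $F$ $\idl{J}^*$-constant, contrary to assumption. If $\idl{E}$ is infinite, it is in particular an infinite $\idl{J}$-e.d. family. Proposition \ref{prop:infinite_jed_convergence} then shows that $F$ $\idl{J}^*$-converges to $\omega$, again contrary to assumption. So no VD $\idl{J}$-MED family exists.

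None of these steps is a real obstacle; the content lies in the earlier propositions. The only points needing care are the nonemptiness of $\idl{E}$ in (1) and the properness argument separating the two regimes in (2).
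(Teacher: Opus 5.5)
Your proposal is correct and follows the paper's proof essentially verbatim. Item (1) is Proposition \ref{prop:jstar_const_fin_vd}; item (2) rules out finite families and then combines Proposition \ref{prop:vdj_jtimesempty} with Theorem \ref{thm:jtimesemptyset_to_j}; item (3) combines Propositions \ref{prop:jstar_const_fin_vd} and \ref{prop:infinite_jed_convergence}. Your extra details are correct and only make explicit what the paper leaves implicit: that $\idl{E}$ is nonempty, and the properness argument showing that $\idl{J}^*$-convergence to $\omega$ excludes $\idl{J}^*$-constancy.
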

	
	\begin{proof}
		\phantom{a}
		
		\begin{itemize}
			\item[(1)] This is Proposition \ref{prop:jstar_const_fin_vd}.
			\item[(2)] In this condition, $ F $ is not $ \idl{J}^* $-constant, so there is no finite VD $ \idl{J} $-MED family. If there is an infinite analytic VD $ \idl{J} $-MED family, then by Proposition \ref{prop:vdj_jtimesempty} and Theorem \ref{thm:jtimesemptyset_to_j}, there will be an infinite analytic $ \idl{J} $-MAD family, a contradiction.
			\item[(3)] By Proposition \ref{prop:jstar_const_fin_vd} and \ref{prop:infinite_jed_convergence}.
		\end{itemize}
	\end{proof}
	
	\begin{cor}
		Let $ \idl{J} $ be one of $ \evendiff_{\mathrm{fin}} $, $ \evendiff $, and $ \finalpha{\alpha} $ for $ 1\leq\alpha<\omega_1 $, and let $ F $ be a partial function whose domain is the underlying set of $ \idl{J} $.
		\begin{enumerate}[label=(\arabic*),font=\upshape]
			\item If $ F $ is $ \idl{J}^* $-constant, then there is a finite closed VD $ \idl{J} $-MED family in $ \prodspc{F} $.
			\item If $ F $ is not $ \idl{J}^* $-constant, then there is no analytic VD $ \idl{J} $-MED family in $ \prodspc{F} $.
		\end{enumerate}
	\end{cor}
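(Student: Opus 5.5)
The plan is to derive both parts from Theorem \ref{thm:vd_layers} and Proposition \ref{prop:jstar_const_fin_vd}. The hypotheses of that theorem must hold for each listed ideal: $\idl{J}$ should be coanalytic (indeed Borel) and contain all finite subsets of its underlying set. For $\evendiff_{\mathrm{fin}}$, $\evendiff$ and $\finalpha{\alpha}$ with $1\leq\alpha<\omega_1$, these are standard Borel ideals, as recorded in \cite{he2026ramsey}. So I would check that each contains the finite sets and is Borel, and cite that source for the definitions.

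For (1), suppose $F$ is $\idl{J}^*$-constant with value $k$. Proposition \ref{prop:jstar_const_fin_vd} gives a VD $\idl{J}$-MED family of size $k$. It is finite, hence closed in $\prodspc{F}$, since finite sets are closed in this Polish (in fact Hausdorff) space. Alternatively, one can exhibit the explicit family $\{f_i:i<k\}$ from the proof of that proposition.

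For (2), suppose $F$ is not $\idl{J}^*$-constant. There are two cases.
\begin{itemize}
\item If $F$ also does not $\idl{J}^*$-converge to $\omega$, then Theorem \ref{thm:vd_layers}(3) gives no VD $\idl{J}$-MED family at all, so in particular no analytic one.
\item If $F$ does $\idl{J}^*$-converge to $\omega$, then Theorem \ref{thm:vd_layers}(1) excludes finite VD families. Corollary \ref{cor:no_analytic_vd_standard_ideals} excludes infinite analytic ones. Equivalently, Theorem \ref{thm:vd_layers}(2) applies together with the nonexistence of infinite analytic $\idl{J}$-MAD families from \cite[Corollaries~6.9 and 6.19]{he2026ramsey}.
\end{itemize}

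Everything else is bookkeeping. The only real step is the definability hypothesis on $\idl{J}$: it must be coanalytic and contain $\finsub{D}$, which is needed for Lemma \ref{lem:coanalytic_dr_omega} inside Theorem \ref{thm:jtimesemptyset_to_j}. I would verify this directly from the recursive definitions, since Borel ideals are closed under the Fubini-type constructions defining $\finalpha{\alpha}$.
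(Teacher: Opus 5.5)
Your proposal is correct and follows essentially the same route as the paper, which derives the corollary directly from Theorem \ref{thm:vd_layers} (via Proposition \ref{prop:jstar_const_fin_vd} for the finite case) together with the nonexistence of infinite analytic $\idl{J}$-MAD families from \cite[Corollaries~6.9 and 6.19]{he2026ramsey}. Your explicit check that each listed ideal is Borel, hence coanalytic, and contains all finite sets supplies a hypothesis the paper uses tacitly.
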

	\begin{proof}
		This follows by \cite[Corollaries~6.9 and 6.19]{he2026ramsey} and Theorem \ref{thm:vd_layers}.
	\end{proof}

	In particular, we take $ \idl{J}=\mathrm{Fin} $. For the function $ F:\omega\to(\omega+1)\setminus\{0\} $, there is a closed VD $ \mathrm{Fin} $-MED family in $ \prodspc{F} $ if and only if $ F $ is eventually equal to some positive natural number.
	
	\section{Definable Productive Families}
	
	Let $ F $ be a partial function from a countable set $ S $ to
	$ (\omega+1)\setminus\{0\} $, let $ D=\dom(F) $, and let $ \idl{J} $ be an
	ideal on $ D $. For $ A,B\subseteq D $, write $ A\subseteq_{\idl{J}}B $ if
	$ A\setminus B\in\idl{J} $. Throughout this section, $ \idl{E} $ denotes a
	$ \idl{J} $-e.d. subfamily of $ \prodspc{F} $.
	
	\subsection{Productivity and weakly $ P^+ $ ideals}
	
	We first introduce three simultaneous strengthenings of productive maximality.
	
	\begin{defn}\label{def:productive_jmed}
		Let $ \idl{E}\subseteq\prodspc{F} $ be $ \idl{J} $-e.d..
		\begin{enumerate}[label=(\arabic*),font=\upshape]
			\item The family $ \idl{E} $ is called \textbf{finitely productive}
			(\FP $ \idl{J} $-MED for short), if for every $ 1\leq k<\omega $ and
			every $ h_0,\ldots,h_{k-1}\in\prodspc{F} $, there are
			$ f_0,\ldots,f_{k-1}\in\idl{E} $ such that
			\[
			\bigcap_{i<k}\Eq{D}(h_i,f_i)\in\idl{J}^+.
			\]
			\item The family $ \idl{E} $ is called \textbf{finite-section productive}
			(\FSP $ \idl{J} $-MED for short), if there is a function
			$ M:\prodspc{F}\to\finsub{\idl{E}}\setminus\{\emptyset\} $ such that,
			for every $ 1\leq k<\omega $ and every
			$ h_0,\ldots,h_{k-1}\in\prodspc{F} $, there are
			$ f_i\in M(h_i) $ for $ i<k $ so that
			$ \bigcap_{i<k}\Eq{D}(h_i,f_i)\in\idl{J}^+ $.
			\item The family $ \idl{E} $ is called \textbf{$ \omega $-centered
				productive} (\omegaCP $ \idl{J} $-MED for short), if for every sequence
			$ \la h_i:i\in\omega\ra $ in $ \prodspc{F} $, there are a sequence
			$ \la f_i:i\in\omega\ra $ in $ \idl{E} $ and a set
			$ B\in\idl{J}^+ $ such that
			$ B\subseteq_{\idl{J}}\Eq{D}(h_i,f_i) $ for every $ i\in\omega $.
		\end{enumerate}
	\end{defn}
	
	We have the following implication from their definitions.
	
	\begin{prop}\label{prop:elementary_productive_implications}
		Every \FSP $ \idl{J} $-MED family is \FP $ \idl{J} $-MED. Every
		\omegaCP $ \idl{J} $-MED family is \FP $ \idl{J} $-MED, and every
		\FP $ \idl{J} $-MED family is $ \idl{J} $-MED.
	\end{prop}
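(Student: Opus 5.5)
The three implications can each be read off the definitions in Definition \ref{def:productive_jmed}; no earlier result is needed. I would verify them in the order stated.

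\textbf{FSP implies FP.} Let $M$ witness finite-section productivity. Given $h_0,\ldots,h_{k-1}\in\prodspc{F}$, the definition supplies $f_i\in M(h_i)\subseteq\idl{E}$ with $\bigcap_{i<k}\Eq{D}(h_i,f_i)\in\idl{J}^+$. This is exactly the requirement for \FP, simply forgetting that the $f_i$ came from the finite sets $M(h_i)$.

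\textbf{$\omega$-CP implies FP.} Given $h_0,\ldots,h_{k-1}$, I would extend to an $\omega$-sequence by setting $h_i:=h_{k-1}$ for $i\geq k$. The \omegaCP property then gives $f_i\in\idl{E}$ and $B\in\idl{J}^+$ with $B\setminus\Eq{D}(h_i,f_i)\in\idl{J}$ for every $i$. Put $C:=\bigcup_{i<k}\bigl(B\setminus\Eq{D}(h_i,f_i)\bigr)$; this is a finite union of members of $\idl{J}$, so $C\in\idl{J}$. Since
\[
B\setminus C\subseteq\bigcap_{i<k}\Eq{D}(h_i,f_i),
\]
it remains to see that $B\setminus C\in\idl{J}^+$. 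If instead $B\setminus C\in\idl{J}$, then $B=(B\setminus C)\cup(B\cap C)\in\idl{J}$, contradicting $B\in\idl{J}^+$. Because $\idl{J}^+$ is closed upward, the intersection $\bigcap_{i<k}\Eq{D}(h_i,f_i)$ is positive. This closure argument is the only step with any content, and it is where properness and closure of $\idl{J}$ under finite unions are used.

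\textbf{FP implies MED.} Take $k=1$ in the definition of \FP. For any $h\in\prodspc{F}$ this gives $f\in\idl{E}$ with $\Eq{D}(h,f)\in\idl{J}^+$. Together with the standing assumption that $\idl{E}$ is $\idl{J}$-e.d., this is precisely the definition of $\idl{J}$-MED.
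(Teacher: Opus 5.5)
Your proof is correct and is essentially the paper's argument. The paper likewise gets \FSP $\Rightarrow$ \FP by forgetting that $f_i\in M(h_i)$, gets \omegaCP $\Rightarrow$ \FP by making the tuple into a countable sequence and noting $B\setminus\bigcap_{i<k}\Eq{D}(h_i,f_i)=\bigcup_{i<k}\bigl(B\setminus\Eq{D}(h_i,f_i)\bigr)\in\idl{J}$, and gets \FP $\Rightarrow$ $\idl{J}$-MED by taking $k=1$. One small correction to your commentary: the middle step does not use properness, only that $\idl{J}$ is closed under finite unions and subsets together with $B\in\idl{J}^+$.
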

	\begin{proof}
		The implication from \FSP to \FP is by definition. Suppose that
		$ \idl{E} $ is \omegaCP and fix $ h_0,\ldots,h_{k-1}\in\prodspc{F} $. Make this tuple a countable sequence and choose $ B\in\idl{J}^+ $ and
		$ f_i\in\idl{E} $ by the definition of \omegaCP $ \idl{J} $-MEDness. As $ B\setminus\bigcap_{i<k}\Eq{D}(h_i,f_i) = \bigcup_{i<k} B\setminus\Eq{D}(h_i,f_i) \in \idl{J}$, we have that $ \bigcap_{i<k}\Eq{D}(h_i,f_i)\in\idl{J}^+ $. Finally, the
		case $ k=1 $ in finite productivity is ordinary $ \idl{J} $-maximality.
	\end{proof}
	
	\begin{defn}\label{def:weakly_p_plus}
		The ideal $ \idl{J} $ is called \textbf{weakly $ P^+ $}, if every
		decreasing sequence $ X_0\supseteq X_1\supseteq\cdots $ in
		$ \idl{J}^+ $ has a $ \idl{J}^+ $-pseudointersection, that is, a
		set $ B\in\idl{J}^+ $ such that $ B\subseteq_{\idl{J}}X_n $ for every
		$ n\in\omega $.
	\end{defn}

	The terminology of weakly $ P^+ $ property was introduced in \cite{he2024maximal}, and the property has been investigated in the literature. In \cite{filipow2022characterizing}, such an ideal $ \idl{I} $ is called $ P^+ (\idl{I}) $. In \cite{kowitz2026homogeneity}, such an ideal is called $ \hat{P}^+ $.
	
	\begin{prop}\label{prop:weak_p_plus_countable_mad}
		Let $ \idl{J} $ be an ideal on a countably infinite set $ D $. The following are equivalent.
		\begin{enumerate}[label=(\arabic*),font=\upshape]
			\item The ideal $ \idl{J} $ is weakly $ P^+ $.
			\item There is no countably infinite $ \idl{J} $-MAD family on $ D $.
		\end{enumerate}
	\end{prop}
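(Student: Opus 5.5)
We prove both implications by contraposition.

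\textbf{Non-weakly-$P^+$ gives a countable MAD family.} Suppose $\idl{J}$ is not weakly $P^+$. Fix a decreasing sequence $X_0\supseteq X_1\supseteq\cdots$ in $\idl{J}^+$ with no $\idl{J}^+$-pseudointersection. Set $A_{-1}:=D\setminus X_0$ and $A_n:=X_n\setminus X_{n+1}$ for $n\in\omega$. These sets are pairwise disjoint. Let $\idl{A}$ be the collection of those $A_n$ (with $n\geq -1$) that lie in $\idl{J}^+$; it is $\idl{J}$-a.d. because its members are disjoint.

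For maximality, let $Y\in\idl{J}^+$ and suppose $Y\cap A_n\in\idl{J}$ for every $n$. Then for each $n$,
\[
Y\setminus X_n\subseteq (Y\cap A_{-1})\cup\bigcup_{i<n}(Y\cap A_i)\in\idl{J},
\]
so $Y\subseteq_{\idl{J}}X_n$ for all $n$. Thus $Y$ is a $\idl{J}^+$-pseudointersection, a contradiction.

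For infinitude, if $\idl{A}$ were finite, only finitely many $A_n$ would be positive. Then $Y:=X_N$ for large $N$ would satisfy $Y\cap A_n\in\idl{J}$ for every $n$: for $n\geq N$ because $A_n$ itself is in $\idl{J}$, and for $n<N$ (including $n=-1$) because $A_n\cap X_N=\emptyset$. This contradicts the maximality just shown, since $X_N\in\idl{J}^+$. Hence $\idl{A}$ is a countably infinite $\idl{J}$-MAD family.

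\textbf{A countable MAD family gives a failure of weakly $P^+$.} Suppose $\{A_n:n\in\omega\}$ is a countably infinite $\idl{J}$-MAD family with distinct members. Set
\[
X_n:=D\setminus\bigcup_{i<n}A_i .
\]
This sequence is decreasing, and each $X_n$ is positive: since $A_n\cap A_i\in\idl{J}$ for $i<n$, we get $A_n\subseteq_{\idl{J}}X_n$, and $A_n\in\idl{J}^+$.

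Suppose $B\in\idl{J}^+$ were a pseudointersection of the $X_n$. Then $B\cap A_i\subseteq B\setminus X_{i+1}\in\idl{J}$ for every $i$. So $B$ is $\idl{J}$-almost disjoint from every member of the family, contradicting maximality. Hence $\idl{J}$ is not weakly $P^+$.

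\textbf{Main obstacle.} The only delicate point is guaranteeing that the family produced in the first direction is infinite, since the problem concerns countably \emph{infinite} MAD families. This is handled by the observation that a finite positive family would fail to catch some $X_N$. No other step needs more than unwinding the definitions.
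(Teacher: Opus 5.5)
Your proof is correct and matches the paper's argument: the pieces $X_n\setminus X_{n+1}$ together with $D\setminus X_0$ give the countably infinite MAD family, and the sets $D\setminus\bigcup_{i<n}A_i$ handle the other direction. The only cosmetic difference is that you get infinitude of the family from the maximality you have just proved, whereas the paper checks directly that $X_N$ would be a $\idl{J}^+$-pseudointersection.
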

	\begin{proof}
		\phantom{a}
		
		\begin{itemize}
			\item[(1)] If $ \idl{J} $ is weakly $ P^+ $, and let
			$ \idl{A}=\{A_n:n\in\omega\}\subseteq \idl{J}^+ $ be a countably infinite
			$ \idl{J} $-a.d. family. For $ n\in\omega $, let
			$ X_n=D\setminus\bigcup_{i<n}A_i $. So $ \la X_n:n\in\omega\ra $
			is decreasing from $ D $. Also, every $ X_n $ is positive, as $ X_n \supseteq A_n \setminus \bigcup_{i<n}A_i \in\idl{J}^+$. Now, by weak $ P^+ $ property there is a $ B\in\idl{J}^+ $ such that $ B\subseteq_{\idl{J}}X_n $ for all
			$ n $, i.e. $ B\setminus (D\setminus\bigcup_{i<n}A_i) = B \cap \bigcup_{i<n}A_i \in \idl{J} $ for any $ n $. So $ \idl{A} $ is not maximal.
			\item[(2)] Conversely, suppose that $ \idl{J} $ is not weakly $ P^+ $. By definition there is a decreasing sequence $ X_0\supseteq X_1\supseteq\cdots $ in
			$ \idl{J}^+ $ such that there is no $ \idl{J}^+ $-pseudointersection. Let $ D_n=X_n\setminus X_{n+1} $ for $ n\in\omega $, and let
			\[
			\idl{A} = \{D_n: D_n\in \idl{J}^+\}\cup R,
			\]
			where $ R = \{D\setminus X_0\}$, if $ D\setminus X_0 \in \idl{J}^+ $, and $R =  \emptyset $ otherwise. We show that $ \idl{A}\subseteq \idl{J}^+ $ is countably infinite and $ \idl{J} $-MAD.
			\begin{itemize}
				\item[i.] We show $ \idl{A} $ is infinite. Otherwise, there is $ N $ such that $ D_n\in\idl{J} $ for all $ n\geq N $. Note that $ X_N\subseteq X_n $ for $ n\leq N $, and $ X_N\setminus X_n=\bigcup_{k=N}^{n-1}D_k\in\idl{J} $ for all $ n>N $.
				Thus $ X_N $ would be a positive pseudointersection, a contradiction.
				\item[ii.] Note that $ \idl{A} $ is pairwise disjoint.
				\item[iii.] To show maximality, suppose that $ Y\in\idl{J}^+ $ is $ \idl{J} $-a.d. from every member of $ \idl{A} $. Then $ Y\cap (D\setminus X_0)\in\idl{J} $ and $ Y\cap D_k\in\idl{J} $ for every $ k\in\omega $. Note that $ Y\setminus X_n = (Y \cap(D\setminus X_0)) \cup \bigcup_{k<n}(Y\cap D_k) \in \idl{J} $ for any $ n\in\omega $. So $ Y $ is a positive pseudointersection of the sequence, again a contradiction.
			\end{itemize}
			So $ \idl{A} $ is a countably infinite $ \idl{J} $-MAD family.
		\end{itemize}
	\end{proof}
	
	\begin{defn}[{\cite{filipow2022characterizing}}]\label{def:p_j}
		A family $ \idl{B}\subseteq\idl{J}^+ $ is said to have \textbf{$ \idl{J}^+$-finite intersection property} ($ \idl{J}^+ $-FIP for short), if every finite intersection of members of
		$ \idl{B} $ is in $ \idl{J}^+ $. A set $ C\in\idl{J}^+ $ is a
		\textbf{$ \idl{J}^+$-pseudointersection} of $ \idl{B} $, if
		$ C\subseteq_{\idl{J}}B $ for every $ B\in\idl{B} $. Let
		$ \mdl{p}_{\idl{J}} $ be the least cardinality of a
		$ \idl{J}^+ $-FIP family without a $ \idl{J}^+ $-pseudointersection, and let $ \mdl{p}_{\idl{J}}=\mathfrak{c}^+ $ if no such family exists.
	\end{defn}
	
	\begin{prop}\label{prop:borel_p_j_bound}
		Let $ \idl{J} $ be a Borel ideal on a countably infinite set $ D $ such that $ \finsub{D}\subseteq\idl{J} $. Then $\mdl{p}_{\idl{J}}\leq\mathfrak{c}$.
	\end{prop}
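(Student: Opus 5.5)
Since $\mdl{p}_{\idl{J}}$ is either a cardinal $\leq\mathfrak{c}$ or equals $\mathfrak{c}^+$, it suffices to exhibit a single $\idl{J}^+$-FIP family $\idl{B}\subseteq\idl{J}^+$ with no $\idl{J}^+$-pseudointersection. Every family of subsets of $D$ has size at most $\mathfrak{c}$, so once such a family exists the bound follows. The natural candidate comes from an ultrafilter. Because $\finsub{D}\subseteq\idl{J}$ and $\idl{J}$ is proper, the filter $\idl{J}^*$ is proper, and I extend it to an ultrafilter $\U$ on $D$. Every member of $\U$ is $\idl{J}$-positive: if $U\in\U\cap\idl{J}$, then $D\setminus U\in\idl{J}^*\subseteq\U$, which is impossible. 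Thus $\U\subseteq\idl{J}^+$. Finite intersections of members of $\U$ lie in $\U$, so $\U$ has the $\idl{J}^+$-FIP.

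It remains to show that $\U$ has no $\idl{J}^+$-pseudointersection. Suppose $C\in\idl{J}^+$ satisfies $C\setminus U\in\idl{J}$ for all $U\in\U$. First, $C\in\U$. Otherwise $D\setminus C\in\U$, and then $C=C\setminus(D\setminus C)\in\idl{J}$, a contradiction. Consider the trace $\idl{J}\uphar C$ and the filter $\U\uphar C=\{U\cap C:U\in\U\}$, which is an ultrafilter on $C$. For every $U\in\U$ we have $C\setminus U\in\idl{J}$, so $C\cap U\in(\idl{J}\uphar C)^*$. Conversely, if $Z\subseteq C$ and $C\setminus Z\in\idl{J}$, then $Z\in\U$ because $\U\subseteq\idl{J}^+$ and $C\in\U$. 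Hence $(\idl{J}\uphar C)^*=\U\uphar C$, so $\idl{J}\uphar C$ is the dual of a (nonprincipal, since finite sets lie in $\idl{J}$) ultrafilter on the countable set $C$.

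This contradicts definability. Since $\idl{J}$ is Borel, $\idl{J}\uphar C=\idl{J}\cap\prst(C)$ is a Borel subset of $\prst(C)$. A nonprincipal ultrafilter, or its dual ideal, on a countable set never has the Baire property: it is invariant under finite modifications, so by the $0$-$1$ law it is meager or comeager. Complementation $Z\mapsto C\setminus Z$ is a homeomorphism exchanging the ideal and its dual filter, and these two sets partition $\prst(C)$, so neither can be meager or comeager. Alternatively, one can invoke Sierpiński's theorem, exactly as in the proof of Lemma \ref{lem:coanalytic_dr_omega}. Therefore no such $C$ exists, $\U$ witnesses $\mdl{p}_{\idl{J}}\leq|\U|\leq\mathfrak{c}$, and the statement is proved.

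The only step needing care is the identification $(\idl{J}\uphar C)^*=\U\uphar C$, in particular the verification that $C\in\U$. That check is elementary. The descriptive-set-theoretic input is the standard fact that nonprincipal ultrafilters lack the Baire property, and the argument in fact works for any ideal with the Baire property, in particular for analytic or coanalytic ones.
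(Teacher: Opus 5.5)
Your proof is correct and follows essentially the paper's argument: extend $\idl{J}^*$ to an ultrafilter $\U\subseteq\idl{J}^+$, and show that a $\idl{J}^+$-pseudointersection $C$ would make $(\idl{J}\uphar C)^*$ a Borel nonprincipal ultrafilter on $C$, which is impossible. The differences are cosmetic. You first show $C\in\U$ and identify $(\idl{J}\uphar C)^*$ with $\U\uphar C$, whereas the paper checks the ultrafilter property directly. You also spell out the $0$-$1$ law argument instead of citing the nonexistence of Borel nonprincipal ultrafilters.
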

	\begin{proof}
		Extend the proper filter $ \idl{J}^* $ to an ultrafilter $ \U $ on $ D $. Since $ \finsub{D}\subseteq\idl{J} $, the ultrafilter $ \U $ is non-principal. Note that $ \U\subseteq\idl{J}^+ $ and then $ \U $ has $ \idl{J}^+ $-FIP.
		
		We claim that $ \U $ has no $ \idl{J}^+ $-pseudointersection. If not, there is $ C\in\idl{J}^+ $ such that $ C\subseteq_{\idl{J}}A $ for all $ A\in\idl{U} $. We show that the proper filter $ (\idl{J}\uphar C)^* \subseteq \prst(C) $ is a non-principal ultrafilter on $ C $.
		\begin{itemize}
			\item[(1)] Let $ X\subseteq C $. Note that either $ X\in\U $ or $ D\setminus X\in\U $.
			\begin{itemize}
				\item[i.] If $ X\in \U $, then by the property of $ C $ we have $ C\setminus X \in \idl{J}\uphar C$. So $ C \setminus (C\setminus X) = X \in (\idl{J}\uphar C)^* $.
				\item[ii.] If $ D\setminus X\in\U $,  again we have $ C\setminus (D\setminus X) = C\cap X = X \in \idl{J}\uphar C $. So $ C\setminus X \in  (\idl{J}\uphar C)^*$. 
			\end{itemize}
			So $ (\idl{J}\uphar C)^* $ is an ultrafilter.
			\item[(2)] To show it's non-principal, if there is a finite $ s\subseteq C $ such that $ s\in (\idl{J}\uphar C)^* $, then we have $ C\setminus s \in \idl{J} $, but $ C\in \idl{J}^+ $, a contradiction.
		\end{itemize}
		
		Now, note that $ (\idl{J}\uphar C)^* $ is a Borel subset of $ \prst(C) $, which is impossible, as there are no Borel non-principal ultrafilters.
		
		Thus $ \U $ is a $ \idl{J}^+ $-FIP family without a
		$ \idl{J}^+ $-pseudointersection. Since
		$ |\U|\leq|\prst(D)|=\mathfrak c $, we conclude that
		$ \mdl{p}_{\idl{J}}\leq\mathfrak c $.
	\end{proof}
	
	The following proposition is mentioned in \cite{filipow2022characterizing}, and we give a proof for self-containment.
	
	\begin{prop}\label{prop:weak_p_plus_pj}
		The ideal $ \idl{J} $ is weakly $ P^+ $ if and only if
		$ \mdl{p}_{\idl{J}}>\omega $.
	\end{prop}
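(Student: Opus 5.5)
We prove the two directions separately. In both directions the point is that a countable $\idl{J}^+$-FIP family can be replaced by a decreasing sequence of positive sets, and the converse replacement is trivial.

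\emph{From weakly $P^+$ to $\mdl{p}_{\idl{J}}>\omega$.} Assume $\idl{J}$ is weakly $P^+$ and let $\idl{B}\subseteq\idl{J}^+$ be a countable family with the $\idl{J}^+$-FIP. We must produce a $\idl{J}^+$-pseudointersection of $\idl{B}$.
\begin{itemize}
\item If $\idl{B}$ is finite, its total intersection is positive by the FIP, and that intersection is itself a pseudointersection.
\item If $\idl{B}$ is infinite, enumerate it as $\{B_n:n\in\omega\}$ and set $X_n:=\bigcap_{i\leq n}B_i$. By the $\idl{J}^+$-FIP each $X_n\in\idl{J}^+$, and the sequence $\la X_n:n\in\omega\ra$ is decreasing.
\end{itemize}
In the infinite case, weak $P^+$ gives $C\in\idl{J}^+$ with $C\subseteq_{\idl{J}}X_n$ for all $n$. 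Since $X_n\subseteq B_n$, we get $C\setminus B_n\subseteq C\setminus X_n\in\idl{J}$. Hence $C\subseteq_{\idl{J}}B_n$ for every $n$, so $C$ is a $\idl{J}^+$-pseudointersection of $\idl{B}$. So no countable $\idl{J}^+$-FIP family lacks a pseudointersection, i.e.\ $\mdl{p}_{\idl{J}}>\omega$. This also covers the convention $\mdl{p}_{\idl{J}}=\mathfrak c^+$ when no witness exists at all.

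\emph{From $\mdl{p}_{\idl{J}}>\omega$ to weakly $P^+$.} Assume $\mdl{p}_{\idl{J}}>\omega$ and let $X_0\supseteq X_1\supseteq\cdots$ be a decreasing sequence in $\idl{J}^+$. Put $\idl{B}:=\{X_n:n\in\omega\}$, a countable subfamily of $\idl{J}^+$.
\begin{itemize}
\item Any finite intersection of members of $\idl{B}$ equals the member with the largest index among them, hence lies in $\idl{J}^+$. So $\idl{B}$ has the $\idl{J}^+$-FIP.
\item $\idl{B}$ is countable (possibly finite, if the sequence stabilizes). Since $\mdl{p}_{\idl{J}}>\omega$, every countable $\idl{J}^+$-FIP family has a $\idl{J}^+$-pseudointersection, and this holds in particular for finite families.
\end{itemize}
A pseudointersection of $\idl{B}$ is exactly a set $B\in\idl{J}^+$ with $B\subseteq_{\idl{J}}X_n$ for all $n$, which is the weak $P^+$ witness required.

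There is no real obstacle here. The only step needing care is the cardinality bookkeeping: the notation $\mdl{p}_{\idl{J}}>\omega$ must be read as ``every family of size at most $\omega$ with the $\idl{J}^+$-FIP has a $\idl{J}^+$-pseudointersection.'' The finite-family case is trivially fine under this reading, as shown in the first direction, so the equivalence does not depend on whether finite families are counted.
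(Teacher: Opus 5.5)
Your proof is correct and follows the paper's argument. For one direction you replace a countable $\mathcal{J}^+$-FIP family by the decreasing sequence of its finite intersections and apply weak $P^+$; for the other you note that a decreasing sequence in $\mathcal{J}^+$ is already a $\mathcal{J}^+$-FIP family. Your separate treatment of finite families is extra care that the paper leaves implicit, and it does not change the argument.
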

	\begin{proof}
		If $ \idl{J} $ is weakly $ P^+ $, for any countable $ \idl{J}^+ $-FIP subfamily $ \{B_n:n\in\omega\} $, the sequence $ X_n=\bigcap_{i\leq n}B_i $ is decreasing and positive. By weak $ \idl{P}^+ $ property we have a $ C\in\idl{J}^+ $, such that  $ C\subseteq_{\idl{J}} X_n$ for all $ n $, which is also a positive pseudointersection of the original subfamily. Conversely, if $ \mdl{p}_{\idl{J}}>\omega $, as decreasing sequences always have $ \idl{J}^+ $-FIP, we get a positive pseudointersection.
	\end{proof}
	
	\subsection{A framework of productive \texorpdfstring{$ \idl{J} $}{J}-MED families}
	
	We now isolate a general mechanism for constructing Borel productive families. The constructions in this subsection are motivated by the Borel idealized MED construction and uniform weak Ramsey methods of \cite[Sections~3--5]{he2024maximal}, and by the Borel productive constructions of \cite[Remark~2.8 and Section~4]{schembecker2026productivity}. In particular, when $ \idl{J}=\mathrm{Fin} $, our notion of \FP maximality is exactly finite productivity in the sense of \cite[Remark~2.8]{schembecker2026productivity}.
	
	\begin{lem}\label{lem:protected_splicing}
		Let $ Q $ and $ P $ be standard Borel spaces. Suppose that the following
		Borel data are given.
		\begin{enumerate}[label=(\arabic*),font=\upshape]
			\item A Borel map
			$ q\mapsto\la h_i^q:i\in\omega\ra $ from $ Q $ to
			$ (\prodspc{F})^\omega $.
			\item A Borel encoding map $ p\mapsto \encfunc_p$ from $ P $ to $\prodspc{F} $ such that
			\begin{equation}\label{eq:encoding_disjoint}
				\forall p\neq r\in P, \ \Eq{D}(\encfunc_p,\encfunc_r)\in\idl{J}.
			\end{equation}
			\item A Borel set of safe-to-copy elements $ T \subseteq P $, a Borel owner map $ \own:T\to Q $, and a Borel map $ p\mapsto u_p $ from  $ T $ to $ \prodspc{F} $.
			\item A Borel map $ q\mapsto B(q)$ from $ Q $ to $ \prst(D)  $ such that
			\begin{equation}\label{eq:disjoint_S_domain}
				\forall q\neq r \in Q, \  B(q)\in\idl{J}^+  \text{ and } B(q)\cap B(r)\in\idl{J}.
			\end{equation}
			\item 
			\begin{itemize}
				\item[i.] For every $ p\in T $ we have that
				\begin{equation}\label{eq:safe_to_copy}
					\forall r\in P, \ B(\own(p)) \cap\Eq{D}(u_p,\encfunc_r)\in\idl{J}.
				\end{equation}
				\item[ii.] For any $ p\not=r\in T $ having the same owner i.e. $ \own(p) = \own(r) $, we have
				\begin{equation}\label{eq:same_owner}
					B(\own(p))\cap\Eq{D}(u_p,u_r)\in\idl{J}.
				\end{equation}
			\end{itemize}
			\item For every $ q\in Q $ and every $ i\in\omega $, there is $ p\in P $ such that at least one of the
			following holds.
			\begin{align}
				p\in T \ \wedge \ \own(p)=q \ &\wedge \ B(q)\subseteq_{\idl{J}}\Eq{D}(h_i^q,u_p).  \label{eq:catched_by_copy_owner}\\
				\neg[p\in T \ \wedge \ \own(p)=q] \ &\wedge \ 	B(q)\subseteq_{\idl{J}}\Eq{D}(h_i^q,\encfunc_p).  \label{eq:cathed_not_by_copy_owner}
			\end{align}
		\end{enumerate}
		
		Now, for $ p\in P $, define $ E_p\in\prodspc{F} $ by
		\begin{equation*}
			E_p(n)=
			\begin{cases}
				u_p(n),&p\in T\text{ and }
				n\in B(\own(p)),\\
				\encfunc_p(n),&\text{otherwise}.
			\end{cases}
		\end{equation*}
		Then $ \idl{E}:=\{E_p:p\in P\} $ is a Borel $ \idl{J} $-e.d. family such that for every $ q\in Q $ there are functions
		$ f_i\in\idl{E} $, $ i\in\omega $ with
		\begin{equation}\label{eq:omega_centered}
			\forall i\in \omega, \ B(q)\subseteq_{\idl{J}}\Eq{D}(h_i^q,f_i)
		\end{equation}
	\end{lem}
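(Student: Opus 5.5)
We verify three things in turn: that $\idl{E}$ is Borel, that it is $\idl{J}$-e.d., and that it satisfies the catching property \eqref{eq:omega_centered}.

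\textbf{Borelness.} The map $p\mapsto E_p$ is Borel. For each $n$, the value $E_p(n)$ is obtained by a Borel case distinction on the Borel set $\{p: p\in T,\ n\in B(\own(p))\}$ from the Borel maps $p\mapsto u_p(n)$ and $p\mapsto\encfunc_p(n)$. Hence $\idl{E}$ is the Borel image of $P$. To conclude that $\idl{E}$ is Borel and not merely analytic, we show that $p\mapsto E_p$ is injective, and then invoke Lusin--Souslin. Injectivity will follow from the e.d. computation below, since $\Eq{D}(E_p,E_r)\in\idl{J}$ and $D\notin\idl{J}$ force $E_p\neq E_r$ for $p\neq r$.

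\textbf{$\idl{J}$-eventual difference.} Fix $p\neq r$ in $P$. We split $\Eq{D}(E_p,E_r)$ according to where each function is in copy mode. Set $C_p=B(\own(p))$ if $p\in T$ and $C_p=\emptyset$ otherwise, and define $C_r$ likewise. There are four pieces.
\begin{itemize}
\item[(a)] Outside $C_p\cup C_r$ we have $E_p=\encfunc_p$ and $E_r=\encfunc_r$, so this piece lies in $\Eq{D}(\encfunc_p,\encfunc_r)\in\idl{J}$ by \eqref{eq:encoding_disjoint}.
\item[(b)] On $C_p\setminus C_r$ we have $E_p=u_p$ and $E_r=\encfunc_r$, so this piece lies in $B(\own(p))\cap\Eq{D}(u_p,\encfunc_r)\in\idl{J}$ by \eqref{eq:safe_to_copy}.
\item[(c)] The piece $C_r\setminus C_p$ is handled symmetrically to (b).
\item[(d)] On $C_p\cap C_r$, both $p$ and $r$ lie in $T$. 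If $\own(p)\neq\own(r)$, then $C_p\cap C_r\in\idl{J}$ by \eqref{eq:disjoint_S_domain}. If the owners coincide, the piece lies in $B(\own(p))\cap\Eq{D}(u_p,u_r)\in\idl{J}$ by \eqref{eq:same_owner}.
\end{itemize}
Closure of $\idl{J}$ under finite unions gives $\Eq{D}(E_p,E_r)\in\idl{J}$.

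\textbf{Catching.} Fix $q$ and $i$, pick $p$ by hypothesis (6), and set $f_i=E_p$.
\begin{itemize}
\item In case \eqref{eq:catched_by_copy_owner}, $E_p=u_p$ on $B(q)=B(\own(p))$. Hence $B(q)\setminus\Eq{D}(h_i^q,E_p)=B(q)\setminus\Eq{D}(h_i^q,u_p)\in\idl{J}$.
\item In case \eqref{eq:cathed_not_by_copy_owner}, if $p\notin T$ then $E_p=\encfunc_p$ everywhere, and we are done directly. Otherwise $p\in T$ with $\own(p)\neq q$. Then $E_p$ differs from $\encfunc_p$ only on $B(\own(p))$, and $B(q)\cap B(\own(p))\in\idl{J}$ by \eqref{eq:disjoint_S_domain}. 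Therefore $B(q)\setminus\Eq{D}(h_i^q,E_p)$ is contained in $\bigl(B(q)\setminus\Eq{D}(h_i^q,\encfunc_p)\bigr)\cup\bigl(B(q)\cap B(\own(p))\bigr)\in\idl{J}$.
\end{itemize}

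The main obstacle is the bookkeeping in the four-case split, especially the cross terms (b) and (c). There one must notice that \eqref{eq:safe_to_copy} controls $u_p$ against every encoding $\encfunc_r$, not only those of non-copying $r$, which is exactly what covers $r\in T$ on the region outside $C_r$.
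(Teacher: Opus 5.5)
Your proof is correct and follows the paper's argument. It uses the same $\idl{J}$-e.d. computation: your region split by $C_p$ and $C_r$ packages the paper's cases (by membership in $T$ and equality of owners) into a single four-piece decomposition, and your cross terms are exactly where the paper applies the safe-to-copy condition against $\encfunc_r$ with $r\in T$. The catching argument and the Borel-plus-injective appeal to Lusin--Souslin are also the same as in the paper.
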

	\begin{proof}
		We first prove that the family is $ \idl{J} $-e.d.. For distinct $ E_p, E_r $, we have that $ p\not=r\in P $. So we will prove that for any $ p\neq r\in P $, $ \Eq{D}(E_p , E_r)\in\idl{J} $. Once it's proved, we have that the function $ p\mapsto E_p $ is injective.
		\begin{itemize}
			\item[(1)] Suppose $ p,r\notin T $. Then $ E_p=\encfunc_p $, $ E_r=\encfunc_r $, so $ \Eq{D}(E_p,E_r)\in\idl{J} $ by \eqref{eq:encoding_disjoint}.
			
			\item[(2)] Suppose $ p \in T$ but $ r\notin T $. We have that
			\[
			\Eq{D}(E_p,E_r) = \Eq{B(\own(p))}(u_p,\encfunc_r) \cup \Eq{B(\own(p))^\compl}(\encfunc_p,\encfunc_r)\in \idl{J}
			\]
			by \eqref{eq:safe_to_copy} and \eqref{eq:encoding_disjoint}. The case in which $ r\in T $ and
			$ p \notin T$ is symmetric.
			
			\item[(3)] Suppose $ p,r\in T $.
			\begin{itemize}
				\item[i.] If $ \own(p) = \own(r) $, by \eqref{eq:same_owner} we have $ B(\own(p))\cap\Eq{D}(u_p,u_r)\in\idl{J} $. Now
				\[
				\Eq{D}(E_p,E_r) = \Eq{B(\own(p))}(u_p,u_r) \cup \Eq{B(\own(p))^\compl}(\encfunc_p,\encfunc_r)\in \idl{J}.
				\]
				\item[ii.] If $ \own(p) \not= \own(r) $, we have
				\begin{align*}
					\Eq{D}(E_p,E_r) = &\Eq{ B( \own(p) ) \cap B( \own( r ) ) }(u_p,u_r)  & &\in\idl{J}\text{ by } \eqref{eq:disjoint_S_domain}\\
					\cup &\Eq{ B( \own(p) ) \setminus B( \own( r ) ) }(u_p,\encfunc_r)    & &\in\idl{J}\text{ by } \eqref{eq:safe_to_copy}\\
					\cup &\Eq{ B( \own(r) ) \setminus B( \own( p ) ) }(u_r,\encfunc_p)   & &\in\idl{J}\text{ by } \eqref{eq:safe_to_copy}\\
					\cup &\Eq{ ( B( \own(p) ) \cup B( \own( r ) ) )^\compl }(\encfunc_p,\encfunc_r)   & &\in\idl{J}\text{ by } \eqref{eq:encoding_disjoint}.
				\end{align*}
			\end{itemize}
		\end{itemize}
		This proves that $ \idl{E} $ is $ \idl{J} $-e.d..
		
		We next prove \eqref{eq:omega_centered}. Fix $ q\in Q $ and $ i\in\omega $. So by the assumption Lemma \ref{lem:protected_splicing} (6),
		\begin{itemize}
			\item[(1)] If \eqref{eq:catched_by_copy_owner} holds, note that $ E_p \uphar B( q ) = u_p \uphar B(q) $ where $ \own(p) = q $. So
			\[
			B(q)\setminus\Eq{D}(h_i^q,E_p)=B(q)\setminus\Eq{D}(h_i^q,u_p)\in\idl{J}.
			\]
			
			\item[(2)] If \eqref{eq:cathed_not_by_copy_owner} holds,
			\begin{itemize}
				\item[i.] If $ p\notin T $, then $ E_p = \encfunc_p $, so by the assumption itself we have $ B(q)\subseteq_{\idl{J}}\Eq{D}(h_i^q,E_p) $.
				\item[ii.] If $ p\in T $ and $ \own(p) \not= q $, similarly we have
				\begin{align*}
					&B(q)\setminus\Eq{D}(h_i^q,E_p)\\
					= \bigl( &B(q)\setminus\Eq{ B(\own(p))^\compl }(h_i^q,\encfunc_p)\bigr)  \cap \bigl(  \Eq{B(\own(p)) }(h_i^q,E_p) \bigr)^\compl \\
					\subseteq \phantom{\bigl(} &B(q)\setminus\Eq{ B(\own(p))^\compl }(h_i^q,\encfunc_p)  \\
					= \phantom{\bigl(} &B(q) \cap \bigl( B(\own(p))^\compl \cap \Eq{ D }(h_i^q,\encfunc_p) \bigr)^\compl  \\
					= \bigl( &B(q)\cap B(\own(p)) \bigr) \cup \bigl( B(q)\setminus \Eq{D}( h_i^q, \encfunc_p) \bigr) \in \idl{J} \quad \text{ by } \eqref{eq:disjoint_S_domain} \text{ and } \eqref{eq:cathed_not_by_copy_owner}.
				\end{align*} 
			\end{itemize}
		\end{itemize}
		
		For every $ i $, choose one output $ f_i=E_p $ obtained from above, which proves \eqref{eq:omega_centered}.
		
		Finally we prove that $ \idl{E} $ is Borel. For every $ n\in D, m< F(n) $, we have that
		\begin{align*}
			\{p\in P: E_p (n) = m\} = \biggl\{p\in P:  [p\notin T \ &\wedge \ \encfunc_p(n) = m ] \\
			\vee \Bigl[ p\in T \ &\wedge \ \bigl[ [n\in B(\own(p)) \ \wedge \ u_p(n) = m ] \\
			& \vee \ \phantom{\bigl[}[ n\notin B(\own(p)) \ \wedge  \  \encfunc_p(n)=m ]  \bigr]  \Bigr]  \biggr\},
		\end{align*}
		which is Borel, so the function $ p\mapsto  E_p$ is Borel. As it is also injective, by Lusin--Souslin theorem, the range $ \idl{E}=\{E_p:p\in P\} $ is Borel.
	\end{proof}
	
	We next give hypotheses which uniformly produce the data of Lemma
	\ref{lem:protected_splicing}. From now until the end of the construction, we assume $ \prodspc{F}=\bairesp $.
	
	\begin{defn}[{\cite{he2024maximal}}]\label{def:uniform_weak_properties}
		Let $ \idl{J} $ be a Borel ideal on $ \omega $.
		\begin{enumerate}[label=(\arabic*),font=\upshape]
			\item The ideal $ \idl{J} $ is called \textbf{uniformly weakly
				$ P^+ $}, if there is a Borel map $ \Psi $ defined on all $ \subseteq $-decreasing sequences $ \la X_n:n\in\omega\ra $ in $ \idl{J}^+ $ such that $ \Psi(\la X_n:n\in\omega\ra)\in\idl{J}^+ $ and $ \Psi(\la X_n:n\in\omega\ra)\subseteq_{\idl{J}}X_n $ for every $ n\in\omega $.
			\item A coloring $ c:[\omega]^2\to2 $ is called \textbf{subadditive}, if
			\[
			c(\{m,n\})\leq c(\{m,k\})+c(\{n,k\})
			\]
			whenever $ m<n<k $. The ideal $ \idl{J} $ is called \textbf{uniformly weakly Ramsey}, if there is a Borel map $ \Phi $ defined on all pairs $ (A,c) $ with $ A\in\idl{J}^+ $ and $ c $ subadditive, such that $ H:=\Phi(A,c) $ satisfies $ H\in(\idl{J}\uphar A)^+ $ and exactly one of the following holds.
			\begin{itemize}
				\item[i.] $ H $ is $ 0 $-homogeneous for $ c $.
				\item[ii.] no $ \idl{J}^+ $ subset of $ H $ is $ 0 $-homogeneous for $ c $.
			\end{itemize}
		\end{enumerate}
	\end{defn}
	
	\begin{prop}[{\cite[Lemma~5.2]{he2024maximal}}]\label{lem:uniform_p_to_ramsey}
		Every Borel uniformly weakly $ P^+ $ ideal on $ \omega $ containing $ \mathrm{Fin} $ is uniformly weakly Ramsey.
	\end{prop}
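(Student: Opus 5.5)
Given a Borel subadditive coloring $c$ and $A\in\idl{J}^+$, I will build $\Phi(A,c)$ by a greedy $0$-homogeneous construction, using $\Psi$ exactly once for the pseudointersection step. The key observation is that for a subadditive coloring, "$1$" behaves like a strict inequality: if $c(\{m,k\})=0$ and $c(\{n,k\})=0$ with $m<n<k$, then $c(\{m,n\})=0$. So the relation "$c(\{m,n\})=0$" is transitive in the right sense. Hence a set $H$ is $0$-homogeneous iff every pair is colored $0$, and we can test positivity of neighborhoods.

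\textbf{Construction.} For $n\in A$ let $N_0(n)=\{k\in A:k>n,\ c(\{n,k\})=0\}$, a Borel function of $(A,c)$.
\begin{itemize}
\item[(a)] If there is no $n\in A$ with $N_0(n)\in\idl{J}^+$, then I set $\Phi(A,c)=A$ and aim for alternative ii.
\item[(b)] Otherwise, I define a sequence recursively. Let $n_0$ be the least $n\in A$ with $N_0(n)\in\idl{J}^+$, and $X_0=N_0(n_0)$. Given $X_j\in\idl{J}^+$, let $n_{j+1}$ be the least $n\in X_j$, $n>n_j$, with $X_j\cap N_0(n)\in\idl{J}^+$, and put $X_{j+1}=X_j\cap N_0(n_{j+1})$.
\end{itemize}
If the recursion never stops, $\Psi(\langle X_j\rangle)$ gives a positive $C\subseteq_{\idl{J}}X_j$ for all $j$, and $\{n_j\}\cup C$ is almost homogeneous. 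That is the wrong shape, so instead I plan a cleaner dichotomy.

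\textbf{Cleaner dichotomy.} Define $H_0=\{n\in A: N_0(n)\in\idl{J}^+\}$. Two cases follow.
\begin{itemize}
\item[(i)] If $H_0\in\idl{J}$: any $0$-homogeneous $Y\subseteq A\setminus H_0$ in $\idl{J}^+$ would have $N_0(\min Y)\supseteq Y\setminus\{\min Y\}\in\idl{J}^+$, because $\mathrm{Fin}\subseteq\idl{J}$. That contradicts $\min Y\notin H_0$. So $A\setminus H_0$ satisfies ii.
\item[(ii)] If $H_0\in\idl{J}^+$: I run the recursion inside $H_0$. Relativizing $H_0$ to the successive $X_j$ (points $n$ with $X_j\cap N_0(n)$ positive), either the relativized set becomes small at some stage, giving ii inside $X_j$ by the argument of (i), or the recursion continues forever.
\end{itemize}
In the infinite case, subadditivity shows that $\{n_j\}$ is $0$-homogeneous: for $i<j$ we have $n_j\in X_i\subseteq N_0(n_i)$. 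The remaining difficulty is to get a positive homogeneous set rather than a countable one.

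\textbf{Main obstacle.} Producing a positive homogeneous set needs a diagonal intersection. Apply $\Psi$ to $\langle X_j\rangle$ to get $C$, then thin it: $H=\{k\in C: k\in X_j \text{ for all } j \text{ with } n_j<k\}$, with the $n_j$ chosen increasing and beyond the finite errors. The real weak-Ramsey step is that if $C\setminus H$ stays positive, one recurses further. This is exactly where I would follow the proof of \cite[Lemma~5.2]{he2024maximal}. Since that result is cited, the final plan is to invoke it directly, checking only that "Borel" is preserved: all case distinctions are Borel conditions on $(A,c)$ because $\idl{J}$ is Borel and the recursion uses countably many Borel operations.
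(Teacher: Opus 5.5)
The paper gives no proof of this statement; it is quoted directly from \cite[Lemma~5.2]{he2024maximal}. So your final move of invoking that lemma is exactly what the paper does, and as a citation it is acceptable. The construction you sketch before it, however, is not a proof, not even a partial one. The step that carries the real content is missing, and the steps you propose for it fail.

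Everything up to the infinite stage is sound. The minimum argument correctly shows the following: whenever the points of $X_j$ with $\idl{J}$-positive $0$-cone inside $X_j$ have $\idl{J}$-positive complement in $X_j$, that complement satisfies alternative ii. (The natural case split is on $A\setminus H_0\in\idl{J}^+$, not on $H_0\in\idl{J}$.)

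The gap is the passage from the countable chain $\{n_j\}$ and $C=\Psi(\la X_j:j\in\omega\ra)$ to a positive homogeneous set.
\begin{itemize}
\item[(1)] Homogeneity fails. If $k<k'$ both lie in the cones $N_0(n_i)$, they only share \emph{smaller} $0$-neighbours, and subadditivity says nothing in that direction. It only yields $c(\{m,n\})=0$ from a common \emph{larger} $0$-neighbour; your remark that the relation is ``transitive in the right sense'' points the wrong way. Take $\idl{J}=\mathrm{Fin}$ and the tree colorings $c_f$ used later. $C$ may consist of nodes $k_i$ extending $n_i$ but incomparable with $n_{i+1}$. Then $C\subseteq^* X_j$ for every $j$, yet $C$ is $1$-homogeneous. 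So neither ``$\{n_j\}\cup C$ is almost homogeneous'' nor any thinning of $C$ can work.
\item[(2)] Positivity of the thinning also fails. The errors $C\setminus X_j$ are only in $\idl{J}$, not finite, and the $n_j$ are already fixed by the recursion. Diagonally removing countably many $\idl{J}$-sets can leave a set in $\idl{J}$. For example, take $\mathrm{Fin}\times\mathrm{Fin}$, let $X_j$ be the union of the columns $\geq j$, and let $C=\omega\times\omega$; then your $H$ has every column finite.
\item[(3)] Borelness is not secured. ``Recurse further'' is in general a transfinite process. Borelness of the resulting selector, which is the entire content of ``uniformly'', then no longer follows from ``countably many Borel operations''.
\end{itemize}
A self-contained proof needs a genuinely different idea at this point. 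Otherwise, simply cite the lemma, as the paper does.
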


	\begin{prop}[{\cite[Lemma~5.3]{he2024maximal}}]\label{lem:standard_uniform_weak_p_plus}
	Every $ F_\sigma $ ideal on $ \omega $ containing $ \mathrm{Fin} $, and every countable iterated Fr\'echet ideal $ \finalpha{\alpha} $, $ 1\leq\alpha<\omega_1 $, is uniformly weakly $ P^+ $.
\end{prop}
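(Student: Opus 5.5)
For an $F_\sigma$ ideal I would go through Mazur's theorem, and for $\finalpha{\alpha}$ I would choose one positive section per level. In both cases the pseudointersection is built by a single recursion in which each choice is the least element in a fixed recursive enumeration satisfying a Borel condition. Borelness of $\Psi$ then follows from the Borel data, and no uniformization theorem is needed.

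\emph{$F_\sigma$ ideals.} By Mazur's theorem write $\idl{J}=\operatorname{Fin}(\varphi)=\{A:\varphi(A)<\infty\}$ for a lower semicontinuous submeasure $\varphi$ on $\omega$, so $\idl{J}^+=\{A:\varphi(A)=\infty\}$. Since $\mathrm{Fin}\subseteq\idl{J}$, every finite set has finite submeasure. By subadditivity, $\varphi(X\setminus k)=\infty$ whenever $\varphi(X)=\infty$, and then lower semicontinuity gives finite subsets of $X\setminus k$ of arbitrarily large submeasure. Given a decreasing sequence $\la X_n:n\in\omega\ra$ in $\idl{J}^+$, fix an enumeration of $\finsub{\omega}$ and recursively let $s_n$ be the least finite set with $s_n\subseteq X_n$, $\min s_n>\max s_{n-1}$ and $\varphi(s_n)\geq n$. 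Put $\Psi(\la X_n\ra)=\bigcup_n s_n$. Then $\varphi(\Psi)\geq\varphi(s_n)\geq n$ for all $n$, so $\Psi\in\idl{J}^+$. Also $\Psi\setminus X_n\subseteq\bigcup_{i<n}s_i$ is finite, hence in $\idl{J}$. For each fixed finite $s$, the condition ``$s\subseteq X_n$'' is clopen in the sequence and ``$\varphi(s)\geq n$'' does not depend on it. So each $s_n$ depends Borel-ly on the input, and therefore $\Psi$ is Borel.

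\emph{Iterated Fr\'echet ideals.} Here no induction on $\alpha$ should be necessary. Recall that $X\in(\finalpha{\alpha})^+$ iff the set $A(X):=\{n: X(n)\in(\finalpha{\gamma^\alpha_n})^+\}$ is infinite. Given a decreasing sequence $\la X_k\ra$ in $(\finalpha{\alpha})^+$, the sets $A_k:=A(X_k)$ are infinite and decreasing. Recursively let $n_k$ be the least element of $A_k$ greater than $n_{k-1}$. Define $B\subseteq S_\alpha$ by $B(n_k)=X_k(n_k)$ and $B(n)=\emptyset$ for $n\notin\{n_k:k\in\omega\}$. Then $B$ has infinitely many positive sections, so $B\in(\finalpha{\alpha})^+$. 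For fixed $j$ and every $k\geq j$ we have $B(n_k)=X_k(n_k)\subseteq X_j(n_k)$, since the sequence is decreasing. Hence $\dom(B\setminus X_j)\subseteq\{n_k:k<j\}$ is finite, which gives $B\setminus X_j\in\finalpha{\alpha}$, as the outer ideal is $\mathrm{Fin}$. (For $\alpha=1$ this reduces to choosing $n_k\in X_k$ increasing.)

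The step I expect to need the most care is the Borelness of this map. Each $\finalpha{\gamma}$ is a Borel ideal, by a routine induction on $\gamma$, and the section maps $X\mapsto X(n)$ are continuous. Hence ``$n\in A_k$'' is a Borel condition on the sequence, so the choices $n_k$, and with them $B$, are Borel functions of $\la X_k\ra$. Finally, every $\finalpha{\alpha}$ contains all finite subsets of $S_\alpha$, as needed for the identification of $S_\alpha$ with $\omega$.
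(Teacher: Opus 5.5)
Your proof is correct. The paper does not prove this statement; it quotes it from \cite[Lemma~5.3]{he2024maximal}. The natural comparisons are therefore the folklore fact that $F_\sigma$ ideals are $P^+$, and the paper's Fubini argument in Proposition~\ref{prop:uniform_weak_p_plus_fubini}, which the authors say is based on that lemma.

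Your $F_\sigma$ half is the usual Mazur-submeasure proof of $P^+$-ness, made uniform by always taking the least admissible finite set in a fixed enumeration of $\finsub{\omega}$. That choice is exactly what makes $\Psi$ Borel without any selection theorem. At $n=0$ you only need a harmless convention, such as $\max\emptyset=-1$ or the requirement $\varphi(s_n)\geq n+1$.

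Your $\finalpha{\alpha}$ half is the same bookkeeping as Proposition~\ref{prop:uniform_weak_p_plus_fubini} with outer ideal $\mathrm{Fin}$. The paper applies an outer selector to the sets $P_n$ of positive sections and copies $A_{r(j)}(j)$ with $r(j)=\max\{k\leq j:j\in P_k\}$; you instead use the diagonal selector $n_k\in A_k$ and copy $X_k(n_k)$. The advantage of your formulation is that it is written for a Fubini sum with varying inner ideals $\finalpha{\gamma^\alpha_n}$, so limit $\alpha$ is handled in one step. Proposition~\ref{prop:uniform_weak_p_plus_fubini}, stated for $\idl{I}\times\idl{K}$ with a single $\idl{K}$, would literally give only the successor stages $\finalpha{\beta+1}=\mathrm{Fin}\times\finalpha{\beta}$. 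Your observation that the inner ideals enter only through the Borelness of the conditions $X_k(n)\notin\finalpha{\gamma^\alpha_n}$, so that induction on $\alpha$ is needed only for Borelness, is exactly right.
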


We note that, it's a folklore that every $ F_\sigma $ ideal is $ P^+ $ \cite[Lemma~3.2.4]{meza2009ideals}, and then is weakly $ P^+ $. Based on the proof of \cite[Lemma~5.3]{he2024maximal}, we have the following.

\begin{prop}\label{prop:uniform_weak_p_plus_fubini}
	Let $ \idl{I} $ and $ \idl{K} $ be Borel ideals on $ \omega $, and suppose that $ \mathrm{Fin}\subseteq\idl{I} $. If $ \idl{I} $ is uniformly weakly $ P^+ $, then $ \idl{I}\times\idl{K} $ is uniformly weakly $ P^+ $.
\end{prop}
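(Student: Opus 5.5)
Given a $\subseteq$-decreasing sequence $\la X_n:n\in\omega\ra$ in $(\idl{I}\times\idl{K})^+$, I will build the pseudointersection section by section and use the map $\Psi$ for $\idl{I}$ to choose which sections to keep. Write $\Psi_{\idl{I}}$ for the Borel map witnessing that $\idl{I}$ is uniformly weakly $P^+$.

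First I define $A_n:=\{j\in\omega:X_n(j)\in\idl{K}^+\}$. Positivity of $X_n$ in $\idl{I}\times\idl{K}$ means exactly that $A_n\in\idl{I}^+$. Since the $X_n$ decrease, so do the $A_n$. Each $A_n$ is Borel in $X_n$, because $\idl{K}$ is Borel and membership of a section in $\idl{K}^+$ is a Borel condition on the code of $X_n$. Applying $\Psi_{\idl{I}}$ gives a Borel set $A:=\Psi_{\idl{I}}(\la A_n:n\in\omega\ra)\in\idl{I}^+$ with $A\setminus A_n\in\idl{I}$ for every $n$.

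Next I choose sections. For $j\in A$, let $n(j)$ be the largest $n\le j$ such that $j\in A_n$, and leave $n(j)$ undefined if $j\notin A_0$. Since the $A_n$ decrease, $j\in A_m$ for all $m\le n(j)$. Set
\[
B:=\{(j,k): j\in A\cap A_0,\ (j,k)\in X_{n(j)}\}.
\]
So $B(j)=X_{n(j)}(j)\in\idl{K}^+$ for $j\in A\cap A_0$, and $B(j)=\emptyset$ otherwise. The map $\la X_n\ra\mapsto B$ is Borel, since each bit of $B$ is determined by finitely many Borel conditions on the $A_m$ and $X_m$. For positivity, note that $\{j:B(j)\in\idl{K}^+\}=A\cap A_0$, which differs from $A$ by a set in $\idl{I}$. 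Hence it lies in $\idl{I}^+$, and $B\in(\idl{I}\times\idl{K})^+$.

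The pseudointersection property is the main point, and this is where $\mathrm{Fin}\subseteq\idl{I}$ is used. Fix $m$. For $j\in A\cap A_m$ with $j\ge m$, the maximality of $n(j)$ gives $n(j)\ge m$. Therefore $B(j)=X_{n(j)}(j)\subseteq X_m(j)$, so $(B\setminus X_m)(j)=\emptyset$. It follows that
\[
\{j:(B\setminus X_m)(j)\in\idl{K}^+\}\subseteq (A\setminus A_m)\cup m,
\]
which belongs to $\idl{I}$ because $A\setminus A_m\in\idl{I}$ and finite sets lie in $\idl{I}$. Thus $B\setminus X_m\in\idl{I}\times\idl{K}$, i.e. $B\subseteq_{\idl{I}\times\idl{K}}X_m$ for every $m$. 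Setting $\Psi(\la X_n\ra):=B$ gives the required Borel map, and only routine verification of Borelness remains.
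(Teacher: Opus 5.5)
Your proposal is correct and matches the paper's proof step for step, up to renaming. You take the sets of indices with $\idl{K}$-positive sections, apply the witness for $\idl{I}$ and intersect with the first set, and use the largest index $n(j)\le j$ to pick the section. Finally, $\mathrm{Fin}\subseteq\idl{I}$ absorbs the finitely many $j<m$.
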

\begin{proof}
	Fix a Borel uniform weak-$ P^+ $ witness $ \Psi $ for $ \idl{I} $, and let $ \la A_n:n\in\omega\ra $ be a $ \subseteq $-decreasing sequence in $ (\idl{I}\times\idl{K})^+ $. For every $ n\in\omega $, define $ P_n=\{j\in\omega:A_n(j)\notin\idl{K}\} \in\idl{I}^+ $, which is also decreasing. Let $P=\Psi(\la P_n:n\in\omega\ra), \ P'=P\cap P_0  $. Then $ P'\in\idl{I}^+ $ and for all $ n\in\omega $ we have $ P'\subseteq_{\idl{I}}P_n $.
	
	For $ j\in P' $, as $ j\in P_0 $, we can define $ r(j)=\max\{k\leq j:j\in P_k\} $, and then define $ B\subseteq\omega\times\omega $ by
	\[
	B(j)=
	\begin{cases}
		A_{r(j)}(j),&j\in P',\\
		\emptyset,&j\notin P'.
	\end{cases}
	\]
	This is a Borel computation. We show that $ B $ is the witness of uniform weak $ P^+ $ property.
	
	For every $ j\in P' $, we have $ j\in P_{r(j)} $, so $ B(j)=A_{r(j)}(j)\notin\idl{K} $, which means $ \{j\in\omega:B(j)\notin\idl{K}\}=P'\in\idl{I}^+ $ and then $ B\in(\idl{I}\times\idl{K})^+ $.
	
	For any $ n\in\omega $, if a $ j\in P'\cap P_n $ has $ j\geq n $, then by definition $ r(j)\geq n $. Since the sequence $ \la A_k:k\in\omega\ra $ is decreasing, for this $ j $ we have $ B(j)=A_{r(j)}(j)\subseteq A_n(j) $. So
	\[
	\{j\in\omega:B(j)\setminus A_n(j)\notin\idl{K}\}\subseteq(P'\setminus P_n)\cup\{0,\ldots,n-1\}\in\idl{I},
	\]
	i.e. $ B\setminus A_n\in\idl{I}\times\idl{K} $.
\end{proof}

\begin{cor}\label{cor:uniform_weak_p_plus_fubini_closure}
	Every ideal in the smallest class of Borel ideals containing all $ F_\sigma $ ideals which contain $ \mathrm{Fin} $ and closed under Fubini products is uniformly weakly $ P^+ $.
\end{cor}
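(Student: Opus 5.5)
We will argue by induction on the construction of the class. Let $\mathcal{C}$ be the class of Borel ideals on $\omega$ that contain $\mathrm{Fin}$ and are uniformly weakly $P^+$. Since the target class is the \emph{smallest} class containing the $F_\sigma$ ideals containing $\mathrm{Fin}$ and closed under Fubini products, it suffices to show that $\mathcal{C}$ contains these generators and is closed under Fubini products. Then the smallest such class is a subclass of $\mathcal{C}$.

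\textbf{Base case.} By Proposition \ref{lem:standard_uniform_weak_p_plus}, every $F_\sigma$ ideal containing $\mathrm{Fin}$ is uniformly weakly $P^+$, and such ideals are Borel. Hence they lie in $\mathcal{C}$.

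\textbf{Closure under Fubini products.} Let $\idl{I},\idl{K}\in\mathcal{C}$, viewed as ideals on $\omega$. We check three things.
\begin{itemize}
\item[(1)] $\idl{I}\times\idl{K}$ is Borel. The set $\{j:A(j)\notin\idl{K}\}$ depends in a Borel way on $A$, and $\idl{I}$ is Borel. Concretely, $A\in\idl{I}\times\idl{K}$ if and only if the map $A\mapsto\{j:A(j)\notin\idl{K}\}$, which is Borel from $\prst(\omega\times\omega)$ to $\prst(\omega)$, sends $A$ into $\idl{I}$.
\item[(2)] $\idl{I}\times\idl{K}$ contains the finite sets of $\omega\times\omega$. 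A finite set $A$ has only finitely many nonempty sections, so $\{j:A(j)\notin\idl{K}\}$ is finite and therefore lies in $\mathrm{Fin}\subseteq\idl{I}$.
\item[(3)] $\idl{I}\times\idl{K}$ is uniformly weakly $P^+$. Since $\mathrm{Fin}\subseteq\idl{I}$, $\idl{I}$ is uniformly weakly $P^+$, and $\idl{K}$ is Borel, this is exactly Proposition \ref{prop:uniform_weak_p_plus_fubini}.
\end{itemize}

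\textbf{Identification of the underlying set.} Membership in the class presupposes identifying $\omega\times\omega$ with $\omega$ via a fixed recursive bijection, following the convention that countable sets are recursive copies. Transporting an ideal along a bijection preserves being Borel, containing $\mathrm{Fin}$, and the uniform weak $P^+$ property: we conjugate $\Psi$ by the induced homeomorphism of $\prst(\omega)$. So this identification causes no loss.

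\textbf{Expected obstacle.} The argument is routine; the only point needing care is the observation in (3) that the hypothesis $\mathrm{Fin}\subseteq\idl{I}$ of Proposition \ref{prop:uniform_weak_p_plus_fubini} is preserved at every stage. This is exactly why $\mathcal{C}$ includes containment of $\mathrm{Fin}$ as a condition, and item (2) supplies it at each Fubini step.
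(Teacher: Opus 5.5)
Your proposal is correct and follows the paper's proof: the base case is Proposition~\ref{lem:standard_uniform_weak_p_plus}, and closure under Fubini products comes from Proposition~\ref{prop:uniform_weak_p_plus_fubini}. You also spell out what the paper leaves implicit: $\idl{I}\times\idl{K}$ is again Borel and contains the finite sets, so the hypothesis $\mathrm{Fin}\subseteq\idl{I}$ survives at every step, and identifying $\omega\times\omega$ with $\omega$ is harmless.
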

\begin{proof}
	By Proposition \ref{lem:standard_uniform_weak_p_plus}, every $ F_\sigma $ ideal containing $ \mathrm{Fin} $ is uniformly weakly $ P^+ $. The conclusion follows from Proposition \ref{prop:uniform_weak_p_plus_fubini}.
\end{proof}
	
	Fix a recursive injection $ \sharp: \fintree\to\omega $, and its partial inverse $ \natural $ such that $ \natural( \sharp (s)) = s $. For $ x\in\bairesp $, define $ \encfunc(x)(n):=\sharp(x\uphar n)$ for every $n\in\omega $. If $ x\neq y $, then $ x\uphar n\neq y\uphar n $ for all sufficiently
	large $ n $, so $ 	\Eq{\omega}(\encfunc(x),\encfunc(y))\in\mathrm{Fin}\subseteq\idl{J} $.
	
	\begin{lem}\label{lem:borel_positive_supports}
		Let $ Q $ be a standard Borel space, and let $ \idl{J} $ be a Borel ideal on $ \omega $ containing $ \mathrm{Fin} $. Then there is a Borel map $ q\mapsto A(q)$ from $ Q $ to $\idl{J}^+ $ such that $ A(q)\cap A(r)\in\idl{J} $ for any $ q\neq r \in Q$.
	\end{lem}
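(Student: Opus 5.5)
We reduce to the case $Q=\bairesp$ and then build a perfect $\idl{J}$-almost disjoint family in a Borel way, using the everywhere-meagerness of $\idl{J}$ as in Lemma \ref{lem:coanalytic_dr_omega}.

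\emph{Reduction.} Every uncountable standard Borel space is Borel isomorphic to $\bairesp$. Every countable one Borel-injects into $\bairesp$, for instance by $q\mapsto$ the constant sequence with value the index of $q$. So it suffices to find a Borel injection $Q\to\bairesp$ together with a Borel map $x\mapsto A(x)$ on $\bairesp$ that is $\idl{J}$-a.d.\ on distinct points. Composing the two gives the required map on $Q$.

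\emph{Construction on $\bairesp$.} By Lemma \ref{lem:coanalytic_dr_omega}, every restriction $\idl{J}\uphar X$ with $X\in\idl{J}^+$ is meager. By Talagrand's characterization of meager ideals containing $\mathrm{Fin}$, there is then an interval partition $\la I_k:k\in\omega\ra$ of $\omega$ such that every set meeting infinitely many $I_k$ is $\idl{J}$-positive. Indeed, the filter $\idl{J}^*$ is meager, so some partition has the property that every member of $\idl{J}^*$ meets all but finitely many intervals. If $Z$ contains infinitely many intervals but $Z\in\idl{J}$, then $\omega\setminus Z\in\idl{J}^*$ misses infinitely many intervals, which is a contradiction. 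So the correct formulation is:
\[
\text{every set containing infinitely many whole intervals } I_k \text{ is in } \idl{J}^+.
\]
Fix such a partition once and for all; no uniformity in any parameter is needed.

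Next, fix a recursive bijection $k\mapsto s_k$ from $\omega$ onto $\fintree$. For $x\in\bairesp$ put
\[
A(x):=\bigcup\{I_k: s_k\subseteq x\}.
\]
The verification goes as follows.
\begin{enumerate}[label=(\roman*)]
\item $A(x)$ contains the infinitely many whole intervals $I_k$ with $s_k=x\uphar n$, so $A(x)\in\idl{J}^+$.
\item If $x\neq y$, then $s_k$ can be an initial segment of both only when $s_k\subseteq x\wedge y$, which happens for finitely many $k$. Hence $A(x)\cap A(y)$ is a finite union of finite intervals, so it lies in $\mathrm{Fin}\subseteq\idl{J}$.
\item The map is Borel, indeed continuous: whether $n\in A(x)$ depends only on whether $s_{k(n)}\subseteq x$, where $k(n)$ is the index of the interval containing $n$.
\end{enumerate}

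\emph{Main obstacle.} The one nontrivial point is the existence of the interval partition. It uses that a Borel ideal containing $\mathrm{Fin}$ is meager, which is Sierpiński's theorem applied as in the proof of Lemma \ref{lem:coanalytic_dr_omega}, and then Talagrand's theorem, applied to the meager filter $\idl{J}^*$, to turn meagerness into a partition.

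A cruder alternative that avoids partitions would be Borel-uniform disjoint refinements, but that would require uniformity which the paper has not established. The interval-partition route needs only a single fixed partition and is fully effective once that partition is chosen.
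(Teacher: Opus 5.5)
Your proof is correct. The reduction step, composing a Borel injection $\rho\colon Q\to\bairesp$ with a suitable map on $\bairesp$, is exactly the one in the paper.

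The difference lies in the map on $\bairesp$. The paper does not construct it; it imports from the proof of \cite[Theorem~4.8]{he2024maximal} a Borel injection $C\colon\bairesp\to\idl{J}^+$ with $\idl{J}$-almost disjoint range. You instead build one explicitly:
\begin{itemize}
\item Sierpi\'nski's theorem makes $\idl{J}$ meager, so $\idl{J}^*$ is meager too, since complementation is a homeomorphism of $\prst(\omega)$.
\item Talagrand's characterization then gives a partition of $\omega$ into finite intervals $\la I_k:k\in\omega\ra$ such that no member of $\idl{J}$ contains infinitely many $I_k$.
\item Spreading the branches of $\fintree$ along this partition, via $A(x)=\bigcup\{I_k:s_k\subseteq x\}$, finishes the job.
\end{itemize}
Your route is self-contained modulo these two classical theorems, and it gives more than required. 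The map $A$ is continuous, and for $x\neq y$ the intersection $A(x)\cap A(y)$ is finite, not merely in $\idl{J}$. The paper's route is shorter but leaves the actual construction to the cited source.

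Two small cleanups.
\begin{itemize}
\item Delete the first formulation, \emph{every set meeting infinitely many $I_k$ is $\idl{J}$-positive}. Since the $I_k$ are finite, a set meets infinitely many of them if and only if it is infinite, so that statement would force $\idl{J}=\mathrm{Fin}$. Only the version \emph{every set containing infinitely many whole intervals is $\idl{J}$-positive} is true, and it is the one you actually use.
\item The meagerness of $\idl{J}$ appears in the proof of Lemma~\ref{lem:coanalytic_dr_omega}, not in its statement. It is cleaner to cite Sierpi\'nski's theorem directly; you only need the case $X=\omega$.
\end{itemize}
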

	\begin{proof}
		From the proof of \cite[Theorem 4.8]{he2024maximal}, we have a Borel injection $ C: \bairesp \to \idl{J}^+ $ such that the image of $ C $ is $ \idl{J} $-a.d.. We take another Borel injection $ \rho:Q\to\bairesp $. So the image of $ A := C \circ \rho : Q\to \idl{J}^+$ is also $ \idl{J} $-a.d..
	\end{proof}
	
	\begin{defn}[{\cite[Example~4.2]{he2024maximal}}]
		Fix $f\in\bairesp$. For each $m,n\in\omega$, we define $m\prec_f n$ if $m<n$, and $ f(m),f(n) \in \sharp'' \finfunc{\omega} $ such that 
		\begin{itemize}
			\item[(1)] $|\natural f(m)|=m$, $|\natural f(n)|=n$.
			\item[(2)] $\natural f(m) \subseteq \natural f(n)$.
		\end{itemize}
		
		Define
		\begin{equation*}
			c_f(\{m,n\})=%
			\begin{cases}
				0, &\text{if $m\prec_f n$,} \\
				1,  &\text{otherwise}.
			\end{cases}
		\end{equation*}
	\end{defn}
	Note that $c_f$ is subadditive.
	
	The following is implicitly used in \cite{he2024maximal}. Here we state it as a lemma.
	
	\begin{lem}\label{lem:prefix_dichotomy}
		Suppose that $ \idl{J} $ is Borel and uniformly weakly Ramsey such that $ \mathrm{Fin} \subseteq \idl{J}$. There are Borel maps which assign to every $ (A,h)\in\idl{J}^+\times\bairesp $ a set $ H(A,h)\in(\idl{J}\uphar A)^+ $, a number $ \tau(A,h)<2 $, and a real $ g(A,h)\in\bairesp $ such that
		\begin{enumerate}[label=(\arabic*),font=\upshape]
			\item If $ \tau(A,h)=0 $, then $ H(A,h)\subseteq\Eq{\omega}(h,\encfunc(g(A,h))) $.
			\item if $ \tau(A,h)=1 $, then for any $ g\in\bairesp $ we have $ \Eq{H(A,h)}(h,\encfunc(g))\in\idl{J} $.
		\end{enumerate}
		Consequently, we have that
		\begin{align*}
			&\{(A,h)\in \idl{J}^+ \times \bairesp: \tau(A,h) = 0\}\\
			= &\{(A,h)\in \idl{J}^+ \times \bairesp: \exists g\in\bairesp \ \Eq{H(A,h)}(h,\encfunc(g)) \in \idl{J}^+\}
		\end{align*}
		is Borel.
	\end{lem}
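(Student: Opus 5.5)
The plan is to apply the uniform weak Ramsey witness $\Phi$ of Definition \ref{def:uniform_weak_properties} to the coloring $c_h$ from \cite[Example~4.2]{he2024maximal}. The point is that $0$-homogeneous sets for $c_h$ are exactly the sets on which $h$ looks like an initial part of some $\encfunc(g)$.

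\textbf{Step 1: definitions of $H$ and $\tau$.} Given $(A,h)\in\idl{J}^+\times\bairesp$, I set $H(A,h):=\Phi(A,c_h)$. The coloring $c_h$ is subadditive, as noted before the lemma. The map $h\mapsto c_h$ is Borel, since $c_h(\{m,n\})$ depends only on $h(m),h(n)$ through the recursive maps $\sharp,\natural$. Hence $(A,h)\mapsto H(A,h)$ is Borel and $H(A,h)\in(\idl{J}\uphar A)^+$. I define $\tau(A,h)=0$ if $H(A,h)$ is $0$-homogeneous for $c_h$, and $\tau(A,h)=1$ otherwise. The condition ``$\forall m<n$ in $H$, $m\prec_h n$'' is a $\Pi^0_1$ condition in $(H,h)$, so $\tau$ is Borel. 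By the definition of uniform weak Ramseyness, if $\tau=1$ then alternative ii holds: no $\idl{J}^+$ subset of $H$ is $0$-homogeneous.

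\textbf{Step 2: the case $\tau=0$.} Let $H=H(A,h)$. Since $\mathrm{Fin}\subseteq\idl{J}$ and $H\in\idl{J}^+$, the set $H$ is infinite. Homogeneity gives $|\natural h(n)|=n$ for every $n\in H$, and $\natural h(m)\subseteq\natural h(n)$ for $m<n$ in $H$; here I use that $H$ has at least two elements, so each element lies in some pair. So $g:=\bigcup_{n\in H}\natural h(n)\in\bairesp$ is well defined, and $g\uphar n=\natural h(n)$ for $n\in H$. Therefore $h(n)=\sharp(g\uphar n)=\encfunc(g)(n)$, which shows $H\subseteq\Eq{\omega}(h,\encfunc(g))$. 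To make $g(A,h)$ a Borel function on all inputs, I put $g(A,h)(k):=\natural h(n)(k)$ for the least $n\in H$ with $n>k$ when $\tau=0$, and $g(A,h):=$ the zero sequence when $\tau=1$. Each coordinate is then Borel in $(A,h)$.

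\textbf{Step 3: the case $\tau=1$.} Fix any $g\in\bairesp$ and let $Z:=\Eq{H}(h,\encfunc(g))$. For $m<n$ in $Z$ we have $h(m)=\sharp(g\uphar m)$ and $h(n)=\sharp(g\uphar n)$. Thus $\natural h(m)=g\uphar m\subseteq g\uphar n=\natural h(n)$ with the right lengths, so $m\prec_h n$. Hence $Z$ is a $0$-homogeneous subset of $H$, and by alternative ii it lies in $\idl{J}$.

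\textbf{Step 4: the displayed equality.} If $\tau=0$, then by Step 2 we get $\Eq{H}(h,\encfunc(g(A,h)))=H\in\idl{J}^+$. If $\tau=1$, no $g$ works, by Step 3. Therefore the set $\{\tau=0\}$ equals the $\Sigma^1_1$-looking set in the statement, and it is Borel because $\tau$ is.

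The only delicate point is making sure ``exactly one of i, ii'' is used correctly. $\tau$ is defined by testing homogeneity directly rather than by trusting $\Phi$ to report which case occurred. Since the two alternatives are exclusive, non-homogeneity of $H$ forces ii.
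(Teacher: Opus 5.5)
Your proof is correct and follows the paper's argument essentially step for step. Both set $H(A,h)=\Phi(A,c_h)$, define $\tau$ by directly testing whether $H(A,h)$ is $0$-homogeneous for $c_h$, take $g(A,h)$ to be the union of the codes $\natural h(n)$ for $n\in H(A,h)$, and use the observation that $\Eq{H}(h,\encfunc(g))$ is always $0$-homogeneous for $c_h$. The only difference is your coordinatewise description of $g(A,h)$, via the least $n\in H(A,h)$ above $k$, which is an equally valid route to Borelness. One small correction: when $H$ is not homogeneous, what forces alternative ii is the ``at least one alternative holds'' half of the dichotomy, not the exclusivity.
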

	\begin{proof}
		Once conditions (1), (2) and the Borelness of these functions are proved, we will automatically have the equality.
		
		Let $ \Phi $ witnesses uniform weak Ramseyness.
		
		For any $ (A,h) \in \idl{J}^+ \times \bairesp$,
		\begin{itemize}
			\item[(1)] Define $ H(A,h):=\Phi(A,c_h) $.
			\item[(2)] Define
			\begin{equation*}
				\tau(A,h):= 
				\begin{cases}
					0, &\text{if $H(A,h)$ is 0-homogeneous for $ c_h $}, \\
					1,  &\text{otherwise}.
				\end{cases}
			\end{equation*}
			\item[(3)] If $ \tau(A,h) =0$, by the definition of 0-homogeneity for $ c_h $, we have that for any $ n\in H(A,h) $, $ \natural h(n) $ exists. So we define
			\begin{equation*}
				g(A,h):= 
				\begin{cases}
					\bigcup_{n\in H(A,h)}\natural h(n), &\text{if $\tau(A,h) = 0$}, \\
					g_0,  &\text{otherwise}.
				\end{cases}
			\end{equation*}
			where $ g_0 $ is a fixed real.
		\end{itemize}

		Now, if $ \tau(A,h) = 0$, for any $ n\in H(A,h) $, we have
		\[
		\encfunc(g(A,h))(n) = \sharp (g(A,h) \uphar n) = \sharp \left(\left(\bigcup_{n\in H(A,h)}\natural h(n) \right) \uphar n \right) = h(n),
		\]
		so condition (1) is proved.
		
		If $ \tau(A,h) = 1 $, by its definition and the property of $ H(A,h) $, we have that any $ K \in (\idl{J}\uphar H(A,h) )^+ $ is not 0-homogeneous for $ c_h $. Note that
		\[
		\Eq{H(A,h)}(h,\encfunc(g)) = \{n\in H(A,h): h(n) = \sharp (g\uphar n)\}.
		\]
		So for any $ m<n \in  \Eq{H(A,h)}(h,\encfunc(g))$, we have $ \natural h(m) = g\uphar m $ and $ \natural h(n) = g\uphar n $. So $ m\prec_h n $ and thus $ \Eq{H(A,h)}(h,\encfunc(g))  $ is 0-homogeneous and is in $ \idl{J} $. So condition (2) is true.
		
		For Borelness, note that the function $ h\mapsto c_h $ and $ \Phi $ is Borel, so $ (A,h)  \mapsto H(A,h)$ is Borel. The function $ \tau(A,h) $is also Borel, as ``$ H $ is 0-homogeneous for $ c $'' is a Borel condition for $ H $ and $ c $. The Borelness of $ (A,h)\mapsto g(A,h) $ is from the Borelness of $ \tau $ and that for $ \tau(A,h) =0$, the value $ \left(\bigcup_{n\in H(A,h)}\natural h(n)\right)(i)=j $  if and only if
		\[
		\exists n\in H(A,h) \ \exists s \in \finfunc{\omega} \text{ with } |s|> i \ \wedge \ s(i)=j, \text{ such that } h(n) =\sharp s.
		\]
	\end{proof}
	
	\begin{lem}\label{lem:fresh_codes}
		Let $ Q $ be a standard Borel space with a Borel map $ q\mapsto\la h_j^q:j\in\omega\ra\in(\bairesp)^\omega $, and let $ R\subseteq Q\times\omega $ be Borel. There is a Borel injection $ \Gamma:R\to\bairesp $ such that for every $ (q,i)\in R $ we have
		\[
		\forall j\in \omega \ \Eq{\omega}(h_j^q,\encfunc(\Gamma(q,i)))\in\mathrm{Fin}.
		\]
	\end{lem}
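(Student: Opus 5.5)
Since $\encfunc(x)(n)=\sharp(x\uphar n)$, the equality $h(n)=\encfunc(x)(n)$ forces $\natural h(n)=x\uphar n$. So it suffices to construct $x=\Gamma(q,i)$ that, for every $j$, avoids the prefixes coded by $h_j^q$ at all but finitely many lengths $n$. The plan is a diagonal construction. At length $n$ we only need to avoid the finitely many strings $\natural h_j^q(n)$ for $j<n$; the constraints for $j\geq n$ concern only the finitely many $n\le j$ for each fixed $j$, so they are harmless. Injectivity will be obtained by interleaving the pair $(q,i)$ into $x$.

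First fix a Borel injection $\rho:R\to 2^\omega$, which exists because $R$ is a standard Borel space. Define $x=\Gamma(q,i)$ by recursion on length as follows. Put $x(0)=0$. Given $x\uphar n$, set
\[
x(n):=2\cdot\bigl(1+\max\{\,s(n): j<n+1,\ s=\natural h_j^q(n+1) \text{ defined},\ |s|=n+1\,\}\bigr)+\rho(q,i)(n),
\]
where the maximum of the empty set is taken to be $-1$. The point of this choice is that $x(n)$ is strictly larger than the last entry of every string $\natural h_j^q(n+1)$ of length $n+1$ with $j\leq n$. Consequently $x\uphar(n+1)$ differs from each such string, and so $\encfunc(x)(n+1)\neq h_j^q(n+1)$ for all $j\le n$.

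Hence for each fixed $j$ we have $\Eq{\omega}(h_j^q,\encfunc(x))\subseteq\{0,\dots,j\}$, which is finite, as required. For injectivity, the parity of $x(n)$ is exactly $\rho(q,i)(n)$. So distinct pairs in $R$ give distinct $x$, and $\Gamma$ is injective.

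For Borelness, each value $x(n)$ is computed from finitely many values $h_j^q(n+1)$ (with $j\le n$) together with $\rho(q,i)(n)$, using the recursive partial map $\natural$. Thus the preimage of each basic set under $\Gamma$ is Borel, because $q\mapsto\langle h_j^q\rangle$ and $\rho$ are Borel. The only delicate point is treating values of $h_j^q(n+1)$ outside $\sharp''\fintree$, or those coding strings of the wrong length. These impose no constraint, and they are handled by the ``defined, of length $n+1$'' clause in the definition of $x(n)$. I expect no real obstacle beyond this bookkeeping.
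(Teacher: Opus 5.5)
Your proof is correct and takes essentially the paper's route: at each length $n+1$ you diagonalize against the finitely many values $h_j^q(n+1)$, $j\le n$, while writing an injective Borel code of $(q,i)$ into the digits. The paper instead takes $\Gamma(q,i)(n)$ to be the least $\langle r(q,i)(n),k\rangle$ for which $\sharp$ of the extended string numerically exceeds $\max_{j\le n}h_j^q(n+1)$, and gets injectivity from the first coordinate of the pairing. You decode via $\natural$ and beat the last entry, with injectivity coming from parity; as a small bonus, your $x(n)$ never depends on $x\uphar n$.

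There is one slip to fix. The clause ``Put $x(0)=0$'' overrides your formula at $n=0$, so the parity of $x(0)$ no longer records $\rho(q,i)(0)$. If $\rho(q,i)$ and $\rho(q,i')$ differ only at coordinate $0$, you then get $\Gamma(q,i)=\Gamma(q,i')$, and injectivity fails. Simply apply your formula at $n=0$ as well; it is well defined there, since it never refers to $x\uphar n$.
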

	\begin{proof}
		Choose a Borel injection $ r:R\to\bairesp $. Let $ (l,k)\mapsto <l,k> $ be a recursive injection from $ \omega\times \omega \to \omega $.
		
		We recursively define $ \Gamma:R\to\bairesp $ as
		\[
		\begin{aligned}
			\forall n\in\omega, \ \Gamma(q,i)(n):= \min\biggl\{ &<r(q,i)(n),k> : k\in\omega \\
			\wedge \ \sharp\Bigl( (\Gamma(q,i)\uphar n) {}^\frown \langle &<r(q,i)(n),k> \rangle \Bigr) > \max\left\{ h_j^q (n+1): j<n+1 \right\} \biggr\}.
		\end{aligned}
		\]
		
		Note that the definition is well-defined because varying $ k $ will give infinitely distinct values. As $ r $ is injective, for any distinct $ (q,i) $ and $ (p,j) $ there is $ n\in\omega $ such that $ r(q,i)(n)\neq r(p,j)(n) $. Also, there are $ k_{q,i}, k_{p,j}\in\omega $ such that
		\[
		\Gamma(q,i)(n) = <r(q,i)(n),k_{q,i}>, \ \Gamma(p,j)(n) = <r(p,j)(n),k_{p,j}>
		\]
		for this $ n $. As $ <\cdot,\cdot> $ is also injective, we have that $ \Gamma(q,i)(n) \neq \Gamma(p,j)(n) $ and thus $ \Gamma $ is injective. By the definition it's also Borel.
		
		Now, fix $ (q,i)\in R $. For any $ j\in\omega $, we have that
		\[
		\Eq{\omega}(h_j^q,\encfunc(\Gamma(q,i))) = \{n\in\omega: h_j^q(n) = \sharp(\Gamma(q,i)\uphar n)\}.
		\]
		For any number $ n+1 > j $, by the definition of $ \Gamma(q,i) $ we have that $ \sharp(\Gamma(q,i)\uphar (n+1)) > h_j^q (n+1) $. So for this $ n+1 $ we have $ n+1 \notin \Eq{\omega}(h_j^q,\encfunc(\Gamma(q,i)))  $. This proves the lemma.
	\end{proof}
	
	\begin{thm}\label{thm:borel_omega_cp}
	Let $ \idl{J} $ be a Borel uniformly weakly $ P^+ $ ideal on $ \omega $ containing $ \mathrm{Fin} $. Then there is a Borel \omegaCP $ \idl{J} $-MED family in $ \bairesp $.
	\end{thm}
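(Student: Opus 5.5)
We apply Lemma \ref{lem:protected_splicing} with $Q=(\bairesp)^\omega$, $h_i^q:=q(i)$, $P=\bairesp$ and encoding map $\encfunc_p=\encfunc(p)$, which satisfies \eqref{eq:encoding_disjoint} because $\mathrm{Fin}\subseteq\idl{J}$. The conclusion \eqref{eq:omega_centered} of that lemma, with $B(q)\in\idl{J}^+$, is exactly \omegaCP maximality for the sequence $q$. So it remains to produce $B$, $T$, $\own$ and $u$ Borel in $q$ and to verify (4)--(6).

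\emph{Construction of $B(q)$.} Start from $X_0:=A(q)$, where $A$ is given by Lemma \ref{lem:borel_positive_supports}. Build a decreasing sequence in $\idl{J}^+$ recursively. Given $X_i$, first put $X_i':=H(X_i,h_i^q)$ and $\tau_i(q):=\tau(X_i,h_i^q)$, using Lemma \ref{lem:prefix_dichotomy}. Then, for each $i'<i$ in turn, replace the current set $Y$ by $Y\cap\Eq{\omega}(h_i^q,h_{i'}^q)$ if this intersection is positive, and otherwise by $Y\setminus\Eq{\omega}(h_i^q,h_{i'}^q)$, which is then positive. The result is $X_{i+1}$. Each step is Borel because $\idl{J}$ is Borel and $\Phi$, $H$, $\tau$ are Borel. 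Let $\Psi$ witness uniform weak $P^+$ and set $B(q):=\Psi(\la X_i\ra)\cap A(q)$. This set is positive and $\subseteq_{\idl{J}}$ every $X_i$. It lies inside $A(q)$, so \eqref{eq:disjoint_S_domain} holds. Moreover:
\begin{itemize}
\item for every $i$ we have $B(q)\subseteq_{\idl{J}}H(X_i,h_i^q)$;
\item for all $i'<i$, either $B(q)\subseteq_{\idl{J}}\Eq{\omega}(h_i^q,h_{i'}^q)$ or $B(q)\cap\Eq{\omega}(h_i^q,h_{i'}^q)\in\idl{J}$.
\end{itemize}
The latter relation is an equivalence relation on indices.

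\emph{The copy data.} Let $R$ be the set of $(q,i)$ such that $\tau_i(q)=1$ and $i$ is least in its class, i.e.\ $B(q)\cap\Eq{\omega}(h_i^q,h_{i'}^q)\in\idl{J}$ for all $i'<i$. This set is Borel. Take $\Gamma$ from Lemma \ref{lem:fresh_codes} for this $R$, and let $T:=\Gamma''R$, which is Borel by Lusin--Souslin. For $p=\Gamma(q,i)$ set $\own(p):=q$ and $u_p:=h_i^q$.

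We verify the conditions of Lemma \ref{lem:protected_splicing} in turn.
\begin{itemize}
\item Condition (5i): $\tau_i(q)=1$ and $B(q)\subseteq_{\idl{J}}H(X_i,h_i^q)$, so Lemma \ref{lem:prefix_dichotomy}(2) gives $B(q)\cap\Eq{\omega}(h_i^q,\encfunc(r))\in\idl{J}$ for every $r$.
\item Condition (5ii): two distinct representatives of the same owner are in different classes, so they are $\idl{J}$-disjoint on $B(q)$.
\item Condition (6), case $\tau_i(q)=1$: let $i'\le i$ be the least index of its class. Then $(q,i')\in R$, and $p=\Gamma(q,i')$ satisfies \eqref{eq:catched_by_copy_owner}.
\item Condition (6), case $\tau_i(q)=0$: take $p=g(X_i,h_i^q)$. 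Lemma \ref{lem:prefix_dichotomy}(1) gives $B(q)\subseteq_{\idl{J}}\Eq{\omega}(h_i^q,\encfunc(p))$. Suppose $p\in T$ with $\own(p)=q$. Then $p=\Gamma(q,i'')$, so $\Eq{\omega}(h_i^q,\encfunc(p))$ is finite, contradicting positivity of $B(q)$. Hence \eqref{eq:cathed_not_by_copy_owner} holds.
\end{itemize}

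The main obstacle is this bookkeeping for copies sharing an owner. Copying $h_i^q$ naively breaks \eqref{eq:same_owner} when distinct $h_i^q$ agree positively, and merging such indices must be done Borel-ly with $\subseteq_{\idl{J}}$ rather than mere positivity. This is why equality decisions are folded into the decreasing sequence before applying $\Psi$. Once the data are checked, Lemma \ref{lem:protected_splicing} yields the Borel \omegaCP $\idl{J}$-MED family.
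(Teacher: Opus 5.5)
Your proof is correct and is essentially the paper's: the same application of Lemma~\ref{lem:protected_splicing}, with the same supports $A(q)$, prefix dichotomy, fresh codes and least-representative set $R$. The only change is that you fold the pairwise equality decisions (for all pairs, not only tangled ones) into the single decreasing sequence and apply $\Psi$ once. The paper instead runs the $H$-recursion, applies $\Psi$, then runs a second recursion on tangled pairs and applies $\Psi$ again.

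One step needs a line of justification. In case (6) with $\tau_i(q)=1$, the least index $i'$ of the class of $i$ must itself satisfy $\tau_{i'}(q)=1$ for $(q,i')\in R$. This holds because otherwise, with $g=g(X_{i'},h_{i'}^q)$, you would get $B(q)\subseteq_{\idl{J}}\Eq{\omega}(h_{i'}^q,\encfunc(g))\cap\Eq{\omega}(h_i^q,h_{i'}^q)\subseteq\Eq{\omega}(h_i^q,\encfunc(g))$. But your argument for (5i), which is valid for every tangled $i$, gives $B(q)\cap\Eq{\omega}(h_i^q,\encfunc(g))\in\idl{J}$, forcing $B(q)\in\idl{J}$.
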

	\begin{proof}
	Fix a Borel uniform weak-$ P^+ $ selector $ \Psi $. By Proposition \ref{lem:uniform_p_to_ramsey}, $ \idl{J} $ is uniformly weakly Ramsey. Let $ Q=(\bairesp)^\omega $, and for $ q\in Q $ write $ q=\la h_i^q:i\in\omega\ra $. By Lemma \ref{lem:borel_positive_supports}, fix a Borel map $ q\mapsto A(q)\in\idl{J}^+ $ such that $ A(q)\cap A(r)\in\idl{J} $ for all distinct $ q,r\in Q $.
	
	For $ q=\la h_0^q,\ldots,h_{k}^q,\ldots \ra\in Q $, let $ X_0(q)=A(q) $, and recursively define $ X_{i+1}(q)\in\idl{J}^+ $, $ \tau_i(q)<2 $, and $ g_i^q\in\bairesp $ for $ i\in\omega $ by
	\[
	\forall i\in\omega,\ X_{i+1}(q)=H(X_i(q),h_i^q),\ \tau_i(q)=\tau(X_i(q),h_i^q)\text{ and }g_i^q=g(X_i(q),h_i^q).
	\]
	By Lemma \ref{lem:prefix_dichotomy}, for every $ i $ we have $ X_{i+1}(q)\subseteq X_i(q) $ and $ X_{i}(q)\in\idl{J}^+ $.
	
	Following the terminology in \cite{schrittesser2017horowitz}, say that $ i $ is \textbf{coherent} if $ \tau_i(q)=0 $, and \textbf{tangled} if $ \tau_i(q)=1 $.
	
	Define
	\[
	Z(q)=\Psi(\la X_i(q):i\in\omega\ra),\ X(q)=Z(q)\cap A(q).
	\]
	The map $ q\mapsto X(q) $ is Borel. Since $ Z(q)\in\idl{J}^+ $ and $ Z(q)\subseteq_{\idl{J}}X_0(q)=A(q) $, we have $ X(q)\in\idl{J}^+ $. Also, $ X(q)\subseteq A(q) $ and $ X(q)\subseteq_{\idl{J}}X_i(q) $ for every $ i\in\omega $.
	
	If $ i $ is coherent, then $ X_{i+1}(q)\subseteq\Eq{\omega}(h_i^q,\encfunc(g_i^q)) $, and then
	\begin{equation}\label{eq:omega_coherent_contained_h_e}
		X(q)\subseteq_{\idl{J}}\Eq{\omega}(h_i^q,\encfunc(g_i^q)).
	\end{equation}
	If $ i $ is tangled, then
	\begin{equation}\label{eq:omega_free_h_e_small}
		\forall x\in\bairesp,\  \Eq{X(q)}(h_i^q,\encfunc(x))\subseteq\bigl(X(q)\setminus X_{i+1}(q)\bigr)\cup\Eq{X_{i+1}(q)}(h_i^q,\encfunc(x))\in\idl{J}.
	\end{equation}
	
	Fix an enumeration $ \la(i_s,j_s):s\in\omega\ra $ of $ \{(i,j)\in\omega^2:i<j\} $. Starting with $ Y_0(q)=X(q) $, define a $ \subseteq $-decreasing sequence $ \la Y_s(q):s\in\omega\ra $ in $ \idl{J}^+ $. Suppose $ Y_s(q)\in\idl{J}^+ $ has been defined. We define
	\[
	Y_{s+1}(q)=
	\begin{cases}
		Y_s(q),&\text{at least one of $ i_s,j_s $ is not tangled},\\
		\Eq{Y_s(q)}(h_{i_s}^q,h_{j_s}^q),&\text{$ i_s,j_s $ are tangled and } \Eq{Y_s(q)}(h_{i_s}^q,h_{j_s}^q)\in\idl{J}^+,\\
		Y_s(q)\setminus\Eq{\omega}(h_{i_s}^q,h_{j_s}^q),&\text{$ i_s,j_s $ are tangled and } \Eq{Y_s(q)}(h_{i_s}^q,h_{j_s}^q)\in\idl{J}.
	\end{cases}
	\]
	Note that in the third case we have $ Y_{s+1}(q)\in\idl{J}^+ $. Also, for any $ s $ the map $ q\mapsto Y_s(q) $ is Borel.
	
	Again, define
	\[
	W(q)=\Psi(\la Y_s(q):s\in\omega\ra),\qquad B(q)=W(q)\cap Y_0(q).
	\]
	Since $ W(q)\in\idl{J}^+ $ and $ W(q)\subseteq_{\idl{J}}Y_0(q) $, we have $ B(q)\in\idl{J}^+ $. Also, $ B(q)\subseteq X(q)\subseteq A(q) $ and $ B(q)\subseteq_{\idl{J}}Y_s(q) $ for every $ s\in\omega $.
	
	\begin{claim}\label{claim:omega_B_q_dichotomy}
		Let $ i<j $ be a tangled pair such that $ (i,j)=(i_s,j_s) $ for some $ s\in\omega $. Then either $ B(q)\subseteq_{\idl{J}}\Eq{\omega}(h_i^q,h_j^q) $ or $ B(q)\cap\Eq{\omega}(h_i^q,h_j^q)\in\idl{J} $.
	\end{claim}
	\begin{claimproof}{\ref{claim:omega_B_q_dichotomy}}
		Suppose $ B(q)\not\subseteq_{\idl{J}}\Eq{\omega}(h_i^q,h_j^q) $. Since $ B(q)\subseteq_{\idl{J}}Y_{s+1}(q) $, if $ Y_{s+1}(q)=\Eq{Y_s(q)}(h_i^q,h_j^q)$, we have
		\[
		B(q)\setminus\Eq{\omega}(h_i^q,h_j^q)\subseteq B(q)\setminus Y_{s+1}(q)\in\idl{J},
		\]
		a contradiction. So $
		Y_{s+1}(q)=Y_s(q)\setminus\Eq{\omega}(h_i^q,h_j^q)$, which implies $ \Eq{\omega}(h_i^q,h_j^q) \subseteq Y_{s+1}(q)^\compl $. Then we have
		\[
		B(q)\cap\Eq{\omega}(h_i^q,h_j^q)\subseteq B(q)\setminus Y_{s+1}(q)\in\idl{J}.
		\]
	\end{claimproof}
	
	Now, for tangled $ i,j\in\omega $, define $ i\sim_q j $ if $ B(q)\subseteq_{\idl{J}}\Eq{\omega}(h_i^q,h_j^q) $. This is an equivalence relation. Indeed, reflexivity and symmetry are immediate. If $ i\sim_q j $ and $ j\sim_q l $, then
	\[
	B(q)\setminus\Eq{\omega}(h_i^q,h_l^q)\subseteq\bigl(B(q)\setminus\Eq{\omega}(h_i^q,h_j^q)\bigr)\cup\bigl(B(q)\setminus\Eq{\omega}(h_j^q,h_l^q)\bigr)\in\idl{J},
	\]
	so $ i\sim_q l $. Let
	\[
	R=\{(q,i)\in Q\times\omega:i\text{ is $q$-tangled and }\forall\text{ $q$-tangled }i'<i,\ i'\not\sim_q i\},
	\]
	which is Borel. That is, for every $ q $, the set $ R(q) $ contains the least numbers of all $ \sim_q $-classes of tangled indices.
	
	Apply Lemma \ref{lem:fresh_codes} to $ R $ and the identity map $ q\mapsto\la h_j^q:j\in\omega\ra $. Let $ \Gamma:R\to\bairesp $ be the resulting Borel injection. Thus, for every $ (q,i)\in R $ and every $ j\in\omega $, we have that $ \Eq{\omega}(h_j^q,\encfunc(\Gamma(q,i)))\in\mathrm{Fin} $.
	
	Let $ P=\bairesp $ and $ T=\Gamma''R $. By the Lusin--Souslin theorem, $ T $ is Borel and $ \Gamma^{-1}:T\to R $ is Borel. For any $ x\in T $, there is unique $ (q,i) $ such that $ \Gamma^{-1}(x)=(q,i) $, and then define $ \own(x)=q $ and $ u_x=h_i^q $, which are Borel. For every $ x\in P $, define $ \encfunc_x=\encfunc(x) $, and for every $ q\in Q $, Define $ B(q) $ as above.
	
	We verify the assumptions of Lemma \ref{lem:protected_splicing}.
	\begin{itemize}
		\item[(1)] The map $ q\mapsto\la h_i^q:i\in\omega\ra $ is Borel.
		
		\item[(2)] The map $ x\mapsto\encfunc_x $ is Borel. If $ x\neq y $, then $ \Eq{\omega}(\encfunc_x,\encfunc_y)\in\mathrm{Fin}\subseteq\idl{J} $.
		
		\item[(3)] The set $ T $ and the maps $ \own $ and $ x\mapsto u_x $ are Borel.
		
		\item[(4)] For every $ q\in Q $, we have $ B(q)\in\idl{J}^+ $. If $ q\neq r $, then $ B(q)\cap B(r)\subseteq A(q)\cap A(r)\in\idl{J} $.
		
		\item[(5)] Let $ x\in T $, and write $ \Gamma^{-1}(x)=(q,i) $.
		\begin{itemize}
			\item[i.] Note that $ i $ is tangled and $ u_x=h_i^q $. Since $ B(q)\subseteq X(q) $, by \eqref{eq:omega_free_h_e_small}, for every $ y\in P $ we have
			\[
			B(\own(x))\cap\Eq{\omega}(u_x,\encfunc_y)\subseteq\Eq{X(q)}(h_i^q,\encfunc(y))\in\idl{J}.
			\]
			Thus \eqref{eq:safe_to_copy} holds.
			
			\item[ii.] 	Suppose $ x\neq y\in T $ and $ \own(x)=\own(y)=q $. Write $ \Gamma^{-1}(x)=(q,i) $ and $ \Gamma^{-1}(y)=(q,j) $. Since $ \Gamma $ is injective, $ i\neq j $. Also, $ i,j\in R(q) $, so they belong to distinct $ \sim_q $-classes. By Claim \ref{claim:omega_B_q_dichotomy},
			\[
			B(\own(x))\cap\Eq{\omega}(u_x,u_y)=B(q)\cap\Eq{\omega}(h_i^q,h_j^q)\in\idl{J}.
			\]
			Thus \eqref{eq:same_owner} holds.
		\end{itemize}
		
		\item[(6)] Fix $ q=\la h_i^q:i\in\omega\ra\in Q $ and $ t\in\omega $.
		\begin{itemize}
			\item[i.] Suppose $ t $ is $ q $-tangled. Let $ i $ be the least in the $ \sim_q $-class of $ t $, and let $ x=\Gamma(q,i) $. Then $ x\in T $, $ \own(x)=q $, and
			\[
			B(q)\subseteq_{\idl{J}}\Eq{\omega}(h_t^q,h_i^q)=\Eq{\omega}(h_t^q,u_x).
			\]
			Thus \eqref{eq:catched_by_copy_owner} holds.
			
			\item[ii.] Suppose $ t $ is $ q $-coherent, and let $ x=g_t^q $. Since $ B(q)\subseteq X(q) $, by \eqref{eq:omega_coherent_contained_h_e} we have $ B(q)\subseteq_{\idl{J}}\Eq{\omega}(h_t^q,\encfunc_x) $.  We claim that $ x\notin T \ \vee \ \own(x)\neq q $. Otherwise, let $ x=\Gamma(q,i) $, and by Lemma \ref{lem:fresh_codes} we have $ \Eq{\omega}(h_t^q,\encfunc_x)\in\mathrm{Fin}\subseteq\idl{J} $, which implies $ B(q)\in J $, a contradiction. Thus \eqref{eq:cathed_not_by_copy_owner} holds.
		\end{itemize}
	\end{itemize}
	
	Thus all the assumptions of Lemma \ref{lem:protected_splicing} hold. Let $ \idl{E}\subseteq\bairesp $ be the Borel $ \idl{J} $-e.d. family given by the lemma. For every $ q=\la h_i^q:i\in\omega\ra\in Q $, there are $ f_i\in\idl{E} $ such that $ B(q)\subseteq_{\idl{J}}\Eq{\omega}(h_i^q,f_i) $ for every $ i\in\omega $. Since $ B(q)\in\idl{J}^+ $, the family $ \idl{E} $ is \omegaCP $ \idl{J} $-MED.
\end{proof}
	
	\begin{rem}\label{rem:borel_fp_uniform_weak_ramsey}
	The construction in Theorem \ref{thm:borel_omega_cp} has a finite version which requires only uniform weak Ramseyness. That is, if $ \idl{J} $ is a Borel uniformly weakly Ramsey ideal on $ \omega $ containing $ \mathrm{Fin} $, then there is a Borel \FP $ \idl{J} $-MED family in $ \bairesp $.
	
	Indeed, for every $ 1\leq k<\omega $ let $ Q_k=(\bairesp)^k $, and let $ Q=\bigsqcup_{1\leq k<\omega}Q_k $. For $ q=\la h_0^q,\ldots,h_{k-1}^q\ra\in Q_k $, define $ \la\widehat{h_i^q}:i\in\omega\ra $ by
	\[
	\widehat{h_i^q}=
	\begin{cases}
		h_i^q,&i<k,\\
		h_0^q,&i\geq k.
	\end{cases}
	\]
	By Lemma \ref{lem:borel_positive_supports}, choose Borel sets $ A(q)\in\idl{J}^+ $ such that $ A(q)\cap A(r)\in\idl{J} $ for any $ q\neq r $.
	
	For $ q\in Q_k $, start with $ X_0(q)=A(q) $ and run the recursion using Lemma \ref{lem:prefix_dichotomy} only for $ i<k $ and let $ X(q)=X_k(q) $. As in the countable construction, say that $ i<k $ is coherent if $ \tau_i(q)=0 $ and tangled if $ \tau_i(q)=1 $. Fix an enumeration $ \la(i_s^k,j_s^k):s<N_k\ra $ of $ \{(i,j):i<j<k\} $, where $ N_k=\binom{k}{2} $, and starting with $ Y_0(q)=X(q) $, run the same three-case recursion as in the proof of Theorem \ref{thm:borel_omega_cp}. Similarly, we also define $ B(q)=Y_{N_k}(q)$. Then $ B(q)\in\idl{J}^+ $, $ B(q)\subseteq X(q)\subseteq A(q) $, and for every tangled pair $ i<j<k $ exactly one of the following occurs:
	\[
	B(q)\subseteq\Eq{\omega}(h_i^q,h_j^q) \ \text{ or } \ B(q)\cap\Eq{\omega}(h_i^q,h_j^q)=\emptyset.
	\]
	
	For tangled $ i,j<k $, define $ i\sim_q j $ if $ B(q)\subseteq\Eq{\omega}(h_i^q,h_j^q) $, and similarly let
	\[
	R=\{(q,i)\in Q\times\omega:i<|q|,\ i\text{ is $ q $-tangled, and }(\forall\text{ $ q $-tangled }i'<i)\ i'\not\sim_q i\}.
	\]
	Apply Lemma \ref{lem:fresh_codes} to $ R $ and the Borel map $ q\mapsto\la\widehat{h_i^q}:i\in\omega\ra $. Define $ P $, $ T $, $ \own $, $ u_x $, and the background codes exactly as in the proof of Theorem \ref{thm:borel_omega_cp}. The verification of the hypotheses of Lemma \ref{lem:protected_splicing} is the same, using the exact dichotomy above similar to Claim \ref{claim:omega_B_q_dichotomy}; for the case $ i\geq k $ in assumption of Lemma \ref{lem:protected_splicing} (6), one reuses the witness in the case $ i=0 $, since $ \widehat{h_i^q}=h_0^q $.
	
	So Lemma \ref{lem:protected_splicing} gives a Borel $ \idl{J} $-e.d. family $ \idl{E}\subseteq\bairesp $ such that \eqref{eq:omega_centered} further gives \FP $ \idl{J} $-MEDness.
	
	In particular, for $ \idl{J}=\mathrm{Fin} $ this gives a Borel finitely productive MED family as in \cite[Theorem~4.7]{schembecker2026productivity}.
\end{rem}

\begin{cor}\label{cor:borel_productive_consequences}
	Let $ \idl{J} $ be a Borel ideal on $ \omega $ containing $ \mathrm{Fin} $.
	\begin{enumerate}[label=(\arabic*),font=\upshape]
		\item If $ \idl{J} $ is uniformly weakly Ramsey, then there is a Borel \FP $ \idl{J} $-MED family in $ \bairesp $.
		\item If $ \idl{J} $ is uniformly weakly $ P^+ $, then there is a Borel \omegaCP $ \idl{J} $-MED family in $ \bairesp $, and then there is a Borel \FP $ \idl{J} $-MED family in $ \bairesp $.
	\end{enumerate}
\end{cor}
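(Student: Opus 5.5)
This corollary collects results already in hand; the only thing to do is route each part to the correct earlier statement.

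For part (1), I would cite Remark \ref{rem:borel_fp_uniform_weak_ramsey}. That remark runs the construction of Theorem \ref{thm:borel_omega_cp} in a finite form, and its hypothesis is only uniform weak Ramseyness together with $\mathrm{Fin}\subseteq\idl{J}$.

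\begin{itemize}
\item Write $Q=\bigsqcup_{k\geq 1}(\bairesp)^k$, and pad each tuple periodically to obtain an $\omega$-sequence $\widehat{h_i^q}$.
\item Choose $\idl{J}$-a.d. Borel supports $A(q)$ by Lemma \ref{lem:borel_positive_supports}.
\item Run the recursion of Lemma \ref{lem:prefix_dichotomy} only $k$ times.
\item Apply the tangled-pair splitting finitely many times to get $B(q)=Y_{N_k}(q)$.
\item Take fresh codes from Lemma \ref{lem:fresh_codes}.
\end{itemize}

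Because there are finitely many steps, no $P^+$ selector $\Psi$ is used; this is exactly why the weaker hypothesis suffices. The dichotomy for tangled pairs also holds exactly, not just modulo $\idl{J}$, so $\sim_q$ is an equivalence relation on the tangled indices.

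Lemma \ref{lem:protected_splicing} then yields a Borel $\idl{J}$-e.d. family $\idl{E}$ satisfying \eqref{eq:omega_centered}. For $q=\la h_0,\dots,h_{k-1}\ra$, pick $f_i$ with $B(q)\subseteq_{\idl{J}}\Eq{\omega}(h_i,f_i)$ for each $i<k$. Then $B(q)\setminus\bigcap_{i<k}\Eq{\omega}(h_i,f_i)$ is a finite union of sets in $\idl{J}$, hence lies in $\idl{J}$. Since $B(q)\in\idl{J}^+$, the intersection $\bigcap_{i<k}\Eq{\omega}(h_i,f_i)$ is positive, which is the \FP condition.

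For part (2), Theorem \ref{thm:borel_omega_cp} gives a Borel \omegaCP $\idl{J}$-MED family directly. Proposition \ref{prop:elementary_productive_implications} says every \omegaCP family is \FP, so the same family is a Borel \FP $\idl{J}$-MED family. Alternatively, Proposition \ref{lem:uniform_p_to_ramsey} shows uniform weak $P^+$ implies uniform weak Ramseyness, after which part (1) applies.

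I expect the main obstacle to be only the finite-version bookkeeping in part (1), namely checking hypothesis (6) of Lemma \ref{lem:protected_splicing} for indices $i\geq k$. This is handled by reusing the witness for $i=0$, since $\widehat{h_i^q}=h_0^q$ there.
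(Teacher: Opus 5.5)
Your proposal is correct and follows the paper's proof: part (1) is Remark \ref{rem:borel_fp_uniform_weak_ramsey}, which uses a finite recursion, no selector $\Psi$, and the exact tangled-pair dichotomy, and part (2) is Theorem \ref{thm:borel_omega_cp} followed by Proposition \ref{prop:elementary_productive_implications}. The only slip is cosmetic: the remark pads a $k$-tuple with $h_0^q$ rather than periodically, and that is what your closing comment about reusing the $i=0$ witness presupposes, although either padding works.
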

\begin{proof}
	(1) follows from Remark \ref{rem:borel_fp_uniform_weak_ramsey}. For (2), apply Theorem \ref{thm:borel_omega_cp} and Proposition \ref{prop:elementary_productive_implications}.
\end{proof}

\begin{cor}\label{cor:borel_omega_cp_fsigma_frechet}
	Let $ \idl{J} $ be a Borel ideal on $ \omega $ satisfying at least one of the following conditions.
	\begin{enumerate}[label=(\arabic*),font=\upshape]
		\item The ideal $ \idl{J} $ is a countable iterated Fr\'echet ideal $ \finalpha{\alpha} $ for some $ 1\leq\alpha<\omega_1 $.
		\item The ideal $ \idl{J} $ belongs to the smallest class of Borel ideals containing all $ F_\sigma $ ideals which contain $ \mathrm{Fin} $ and closed under Fubini products.
	\end{enumerate}
	Then there is a Borel \omegaCP $ \idl{J} $-MED family in $ \bairesp $, and then there is a Borel \FP $ \idl{J} $-MED family in $ \bairesp $.
\end{cor}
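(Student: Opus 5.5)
The plan is to reduce everything to Corollary \ref{cor:borel_productive_consequences}(2): it suffices to check that in each case $\idl{J}$ is a Borel, uniformly weakly $P^+$ ideal on $\omega$ containing $\mathrm{Fin}$. Once that is established, Theorem \ref{thm:borel_omega_cp} gives a Borel \omegaCP $\idl{J}$-MED family in $\bairesp$. Proposition \ref{prop:elementary_productive_implications} then shows that this same family is also \FP $\idl{J}$-MED, which yields the second conclusion.

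For case (1), $\finalpha{\alpha}$ with $1\leq\alpha<\omega_1$ is a Borel ideal. Under the fixed identification of $S_\alpha$ with $\omega$, it contains all finite sets; this follows by an easy induction on $\alpha$, because a finite $X\subseteq S_\alpha$ has only finitely many nonempty sections and each of these is finite. Proposition \ref{lem:standard_uniform_weak_p_plus} states directly that $\finalpha{\alpha}$ is uniformly weakly $P^+$.

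For case (2), Corollary \ref{cor:uniform_weak_p_plus_fubini_closure} gives that every ideal in the stated class is uniformly weakly $P^+$. Each such ideal is Borel by hypothesis. It remains to confirm that it contains $\mathrm{Fin}$, which we do by induction along the generation of the class. The generators are $F_\sigma$ ideals containing $\mathrm{Fin}$ by assumption. For a Fubini product $\idl{I}\times\idl{K}$ where $\idl{I}\supseteq\mathrm{Fin}$, a finite $A\subseteq\omega\times\omega$ has only finitely many $j$ with $A(j)\neq\emptyset$. Hence $\{j:A(j)\notin\idl{K}\}$ is finite, so it lies in $\idl{I}$, and therefore $A\in\idl{I}\times\idl{K}$.

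The only delicate point is the closure step. Proposition \ref{prop:uniform_weak_p_plus_fubini} requires $\mathrm{Fin}\subseteq\idl{I}$ for the left factor, so the induction must carry the invariant ``Borel, contains $\mathrm{Fin}$, uniformly weakly $P^+$'' together through each step. The computation above verifies exactly that, and it is also the content implicitly used in Corollary \ref{cor:uniform_weak_p_plus_fubini_closure}. With these verifications, the statement follows immediately from the cited results.
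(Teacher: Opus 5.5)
Your proposal is correct and follows the paper's proof. The paper gets uniform weak $P^+$ from Proposition~\ref{lem:standard_uniform_weak_p_plus} in case (1) and from Corollary~\ref{cor:uniform_weak_p_plus_fubini_closure} in case (2), then applies Corollary~\ref{cor:borel_productive_consequences}(2), exactly as you do. Your explicit check that $\idl{J}\supseteq\mathrm{Fin}$, carried as an invariant through the Fubini closure, supplies a hypothesis the paper leaves implicit.

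One small point. For $\finalpha{\alpha}$ no induction is needed: a finite $X\subseteq S_\alpha$ has finite $\dom(X)$, and that alone puts $X$ in $\finalpha{\alpha}$. An induction with the hypothesis that finite sets lie in $\finalpha{\beta}$ would actually fail at $\beta=0$, since $\finalpha{0}=\{\emptyset\}$ does not contain the finite set $S_0=\{\emptyset\}$.
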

\begin{proof}
	In case (1), the ideal $ \idl{J} $ is uniformly weakly $ P^+ $ by Proposition \ref{lem:standard_uniform_weak_p_plus}. In case (2), the ideal $ \idl{J} $ is uniformly weakly $ P^+ $ by Corollary \ref{cor:uniform_weak_p_plus_fubini_closure}. The conclusion follows from Corollary \ref{cor:borel_productive_consequences}.
\end{proof}

	\subsection{No analytic \FSP $ \mathrm{Fin} $-MED family}

\begin{lem}\label{lem:fsp_global_ultrafilter}
	Let $ F $ be a partial function, $ D=\dom(F) $, $ \idl{J} $ an ideal on $ D $, and let $ \idl{E}\subseteq\prodspc{F} $ be a $ \idl{J} $-e.d. family. The following are equivalent.
	\begin{enumerate}[label=(\arabic*),font=\upshape]
		\item The family $ \idl{E} $ is \FSP $ \idl{J} $-MED.
		\item There is an ultrafilter $ \U $ on $ D $ such that $ \idl{J}^*\subseteq\U $ and $ \idl{E} $ is $ \U^* $-MED.
	\end{enumerate}
\end{lem}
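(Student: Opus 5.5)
For the direction (2)$\Rightarrow$(1), fix an ultrafilter $\U\supseteq\idl{J}^*$ such that $\idl{E}$ is $\U^*$-MED.
\begin{itemize}
\item For each $h\in\prodspc{F}$ choose some $f_h\in\idl{E}$ with $\Eq{D}(h,f_h)\in(\U^*)^+=\U$, and set $M(h)=\{f_h\}$, a nonempty finite subset of $\idl{E}$.
\item Given $h_0,\ldots,h_{k-1}$, the intersection $\bigcap_{i<k}\Eq{D}(h_i,f_{h_i})$ lies in $\U$ because $\U$ is a filter.
\item Since $\idl{J}^*\subseteq\U$, we have $\U\cap\idl{J}=\emptyset$. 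Indeed, if $X\in\U\cap\idl{J}$, then $D\setminus X\in\idl{J}^*\subseteq\U$, which is impossible.
\item Hence the intersection is in $\idl{J}^+$, so $\idl{E}$ is \FSP $\idl{J}$-MED.
\end{itemize}
Note that $\U^*$-e.d.\ is not needed in this direction beyond $\idl{E}$ being given as $\idl{J}$-e.d.

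For (1)$\Rightarrow$(2), fix a witness $M$. I will use a compactness argument in the space $\beta_{\idl{J}}D$ of ultrafilters extending $\idl{J}^*$, matching the Stone-space description from the introduction. For $h\in\prodspc{F}$ let
\[
C_h=\{\U\in\beta_{\idl{J}}D:\exists f\in M(h)\ \Eq{D}(h,f)\in\U\}.
\]
Since $M(h)$ is finite, $C_h$ is a finite union of basic clopen sets intersected with the closed set $\beta_{\idl{J}}D$, so $C_h$ is closed.

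To get the finite intersection property, fix $h_0,\ldots,h_{k-1}$. By \FSP there are $f_i\in M(h_i)$ such that $X=\bigcap_{i<k}\Eq{D}(h_i,f_i)\in\idl{J}^+$. Then $\idl{J}^*\cup\{X\}$ generates a proper filter, since $X\cap Y\neq\emptyset$ for every $Y\in\idl{J}^*$ (otherwise $X\subseteq D\setminus Y\in\idl{J}$). Extend this filter to an ultrafilter $\U$. Then $\U\in\bigcap_{i<k}C_{h_i}$.

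By compactness of $\beta D$, there is some $\U\in\bigcap_{h}C_h$. Such a $\U$ extends $\idl{J}^*$, and for every $h$ there is $f\in\idl{E}$ with $\Eq{D}(h,f)\in\U=(\U^*)^+$ (using Fact~\ref{fact:fubini_power_ultrafilter} with $\alpha=1$, or directly).

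The main point to check is that $\idl{E}$ is $\U^*$-e.d., i.e.\ that distinct $f,g\in\idl{E}$ satisfy $\Eq{D}(f,g)\notin\U$. Since $\idl{E}$ is only assumed $\idl{J}$-e.d., we have $\Eq{D}(f,g)\in\idl{J}$, and $\idl{J}\subseteq\U^*$ because $\idl{J}^*\subseteq\U$. So $\Eq{D}(f,g)\in\U^*$, which is what is needed. Thus the only real content is the compactness step. If one prefers to avoid Stone spaces, one can instead show directly that
\[
\idl{J}^*\cup\{\textstyle\bigcup_{f\in M(h)}\Eq{D}(h,f):h\in\prodspc{F}\}
\]
has the finite intersection property. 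This works because any finite intersection contains a set $\bigcap_{i<k}\Eq{D}(h_i,f_i)\in\idl{J}^+$. Extending this family to an ultrafilter $\U$, the ultrafilter property then picks, for each $h$, one $f\in M(h)$ with $\Eq{D}(h,f)\in\U$. This is exactly where the finiteness of $M(h)$ is essential.
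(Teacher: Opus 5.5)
Your proof is correct, and your direction (2)$\Rightarrow$(1) is exactly the paper's argument. For (1)$\Rightarrow$(2) you take a different and somewhat shorter route.

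The paper first applies Tychonoff to the product $\prod_{h\in\prodspc{F}}M(h)$ of finite discrete spaces, using the clopen sets $K_H$. This gives a single selector $s$ with $s(h)\in M(h)$ such that $\bigcap_{h\in H}\Eq{D}(h,s(h))\in\idl{J}^+$ for every finite $H$. Only afterwards does the paper extend $\idl{J}^*\cup\{\Eq{D}(h,s(h)):h\in\prodspc{F}\}$ to an ultrafilter.

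You skip the selector. You work with the closed sets $C_h=\what{\bigcup_{f\in M(h)}\Eq{D}(h,f)}$ in $\beta_{\idl{J}}D$, or equivalently extend the filter generated by $\idl{J}^*$ and the finite unions $\bigcup_{f\in M(h)}\Eq{D}(h,f)$. The primeness of the ultrafilter then chooses, for each $h$, some $f\in M(h)$ with $\Eq{D}(h,f)\in\U$.

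What your version buys is directness. It exhibits a point of $\bigcap_h C_h\subseteq\bigcap_h O_h(\idl{E})$, so it essentially proves the Stone-space characterization in Theorem~\ref{thm:stone_hierarchy}(4) at the same time. The finiteness of $M(h)$ enters only to make $C_h$ closed, or equivalently to let a finite union in $\U$ have a piece in $\U$.

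The paper's version yields an explicit coherent selector, but this carries no extra information. Once $\U$ exists, the $\U^*$-e.d.\ property makes the $f\in\idl{E}$ with $\Eq{D}(h,f)\in\U$ unique, so $\U$ determines such a selector anyway.

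One small slip: Fact~\ref{fact:fubini_power_ultrafilter} is stated for ultrafilters on $\omega$ and their iterated powers on $S_\alpha$, not on an arbitrary countable $D$. However, the identity $(\U^*)^+=\U$ is immediate for any ultrafilter, as your ``or directly'' acknowledges.
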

\begin{proof}
	\phantom{a}
	
	\begin{itemize}
		\item[(1)] Suppose $ M:\prodspc{F}\to\finsub{\idl{E}}\setminus\{\emptyset\} $ witnesses that $ \idl{E} $ is \FSP. Consider the compact product space $ K=\prod_{h\in\prodspc{F}}M(h) $, where every term has discrete topology. For every finite $ H\subseteq\prodspc{F} $, define
		\[
		K_H=\left\{s\in K:\bigcap_{h\in H}\Eq{D}(h,s(h))\in\idl{J}^+\right\},
		\]
		which is clopen, and by \FSP is non-empty. Also, for finite $ H_0,\ldots,H_{m-1}\subseteq\prodspc{F} $, we have $ K_{\bigcup_{i<m}H_i}\subseteq\bigcap_{i<m}K_{H_i} $. Thus $ \{K_H:H\in\finsub{\prodspc{F}}\} $ has the finite intersection property, and by compactness there is $ s\in K $ such that
		\[
		\forall H\in\finsub{\prodspc{F}},\ \bigcap_{h\in H}\Eq{D}(h,s(h))\in\idl{J}^+.
		\]
		
		We claim that $ \idl{J}^*\cup\{\Eq{D}(h,s(h)):h\in\prodspc{F}\} $ has the finite intersection property. Let $ h_0,\ldots,h_{k-1}\in\prodspc{F} $, $ J_0,\ldots,J_{l-1}\in\idl{J} $, and define $ A=\bigcap_{i<k}\Eq{D}(h_i,s(h_i))\in\idl{J}^+ $. Since $ \bigcup_{j<l}J_j\in\idl{J} $, we have $ A\setminus\bigcup_{j<l}J_j\in\idl{J}^+ $ i.e. $ A\cap\bigcap_{j<l}(D\setminus J_j)\neq\emptyset$.
		
		So it generates a proper filter, and we extend it to an ultrafilter $ \U $ on $ D $. Then $ \idl{J}^*\subseteq\U $ and for every $ h\in\prodspc{F} $ we have $ \Eq{D}(h,s(h))\in\U $. Since $ \idl{J}\subseteq\U^* $, the family $ \idl{E} $ is $ \U^* $-e.d., and since for every $ h\in\prodspc{F} $ we have $ \Eq{D}(h,s(h))\in\U=(\U^*)^+ $, the family $ \idl{E} $ is $ \U^* $-MED.
		
		\item[(2)] Conversely, suppose $ \U $ is an ultrafilter on $ D $ such that $ \idl{J}^*\subseteq\U $ and $ \idl{E} $ is $ \U^* $-MED. For every $ h\in\prodspc{F} $, choose $ f_h\in\idl{E} $ such that $ \Eq{D}(h,f_h)\in\U $, and define $ M(h)=\{f_h\} $. As for every $ h_0,\ldots,h_{k-1}\in\prodspc{F} $ we have $ \bigcap_{i<k}\Eq{D}(h_i,f_{h_i})\in\U $. Also, since $ \idl{J}^*\subseteq\U $, we have $ \U\subseteq \idl{J}^+ $. Thus $ M $ witnesses that $ \idl{E} $ is \FSP $ \idl{J} $-MED.
	\end{itemize}
\end{proof}

\begin{defn}
	A non-principal ultrafilter $ \U $ on $ \omega $ is called a \textbf{$ P $-point}, if for any decreasing sequence $ U_0\supseteq U_1\supseteq\cdots $ in $ \U $ there is a $ B\in\U $ such that $ \forall n\in\omega, \  B\subseteq^*U_n $.
\end{defn}

We use the standard equivalent partition characterization of $ P $-points; e.g. see \cite[Section~5.4]{goldbring2022ultrafilters}.

	\begin{prop}
	For a non-principal ultrafilter $ \U $ on $ \omega $, it's a $ P $-point if and only if any one of the following holds.
	\begin{enumerate}[label=(\arabic*),font=\upshape]
		\item For every partition $ \omega=\bigsqcup_{n\in\omega}P_n $ such that $ \forall n\in\omega, \  P_n\notin\U $, there is an $ X\in\U $ such that for any $ n\in\omega $ the intersection $ X\cap P_n $ is finite.
		\item For every function $ f:\omega\to\omega $, there is an $ X\in\U $ such that $ f\uphar X $ is either finite-to-one or constant.
	\end{enumerate}
\end{prop}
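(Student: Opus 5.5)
We prove the equivalence of the definition of $P$-point with (1) and with (2) via the cycle (definition) $\Rightarrow$ (1) $\Rightarrow$ (2) $\Rightarrow$ (definition). Throughout, $\U$ is a fixed non-principal ultrafilter on $\omega$.

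\emph{Definition implies (1).} Let $\omega=\bigsqcup_n P_n$ with every $P_n\notin\U$. Put $U_n:=\omega\setminus\bigcup_{i<n}P_i$. Each $U_n$ is the complement of a finite union of sets outside $\U$, so $U_n\in\U$, and the sequence is decreasing. The $P$-point property gives $B\in\U$ with $B\subseteq^* U_{n+1}$ for every $n$. Since $P_n\cap U_{n+1}=\emptyset$, the set $B\cap P_n\subseteq B\setminus U_{n+1}$ is finite. So $X=B$ witnesses (1).

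\emph{(1) implies (2).} Fix $f:\omega\to\omega$ and consider the fibre partition $P_n=f^{-1}(\{n\})$.
\begin{itemize}
\item If some $P_n\in\U$, then $X=P_n$ makes $f\uphar X$ constant.
\item Otherwise every fibre lies outside $\U$. Condition (1) gives $X\in\U$ meeting each fibre in a finite set, so $f\uphar X$ is finite-to-one.
\end{itemize}

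\emph{(2) implies the definition.} This is the main step. Let $U_0\supseteq U_1\supseteq\cdots$ lie in $\U$. We may assume $U_0=\omega$, and we may replace $U_n$ by $U_n\setminus n$ (still in $\U$ by non-principality) so that $\bigcap_n U_n=\emptyset$. Define $f(k)$ to be the least $n$ with $k\notin U_{n+1}$; this is a well-defined function $\omega\to\omega$. Note that $f(k)\geq m$ if and only if $k\in U_m$.

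Apply (2) to get $X\in\U$ with $f\uphar X$ constant or finite-to-one.
\begin{itemize}
\item The constant case is impossible. If $f\uphar X\equiv n$, then $X\cap U_{n+1}=\emptyset$, contradicting $X,U_{n+1}\in\U$.
\item So $f\uphar X$ is finite-to-one. For each $m$, the set $X\setminus U_m$ is contained in $\{k\in X: f(k)<m\}$, which is a finite union of finite fibres and hence finite.
\end{itemize}
Thus $X\subseteq^* U_m$ for every $m$, and $B=X$ witnesses that $\U$ is a $P$-point.

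The only delicate point is the normalisation $\bigcap_n U_n=\emptyset$ in the last step. It is needed so that $f$ is defined everywhere on $\omega$, and it is what rules out the constant case.
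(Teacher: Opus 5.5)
Your proof is correct. Each link of the cycle checks out, and so does the normalisation $U_0=\omega$, $\bigcap_n U_n=\emptyset$: it is harmless because $U_n\setminus n=^*U_n$ and any pseudointersection of $\la U_n:n\geq 1\ra$ is almost contained in $U_0$. It makes $f$ total, and every fibre $f^{-1}(\{n\})=U_n\setminus U_{n+1}$ misses $U_{n+1}\in\U$, which is what excludes the constant case.

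The paper gives no proof of its own and simply cites this characterization as standard from \cite[Section~5.4]{goldbring2022ultrafilters}. Your cycle is the usual argument. In particular, your chain (1) $\Rightarrow$ (2) $\Rightarrow$ definition supplies exactly the direction the paper uses in Claim~\ref{claim:u_p_point} and Proposition~\ref{prop:analytic_pideal_p_point_fsigma}.
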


	\begin{defn}[{\cite{millan2020filters}}]
		Let $ q\in\bairesp$. A subset $ X\in\ramseysp $ is called \textbf{$ q $-rare}, if $ \forall m<n\in X,\ q(m)<n$.
	\end{defn}
	
	Also, given $ H\in\ramseysp $, define $ [H]^q := \{X\in \infsub{H}: X \text{ is } q\text{-rare}\} $.
	
	\begin{thm}[Taylor; see {\cite[Theorem~4(c)]{blass1988selective}}]\label{thm:p_point_sparse_homogeneity}
		Let $ \U $ be a $ P $-point on $ \omega $ and $ \idl{X}\subseteq\infsub{\omega} $ be analytic. Then there are $ H\in\U $ and $ q:\omega\to\omega $ such that either $ [H]^q \subseteq \idl{X} $ or $ [H]^q \cap \idl{X} = \emptyset $.
	\end{thm}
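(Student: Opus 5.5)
The plan is to adapt the Galvin--Prikry/Ellentuck combinatorics for Ramsey ultrafilters (Theorem \ref{thm:ramsey_ultrafilter}(2)). Wherever that argument uses \emph{selectivity} to diagonalize, we will use only the $P$-point property together with a rarity function $q$ that absorbs the finite errors. Write $[s,H]^q$ for the set of $q$-rare $X$ with $s\subseteq X\subseteq s\cup H$. Call $\idl{X}$ \emph{$q$-$\U$-Ramsey} if for every $s$ and every $A\in\U$ there are $H\in\U$ with $H\subseteq A$ and some $q$ such that $[s,H]^q\subseteq\idl{X}$ or $[s,H]^q\cap\idl{X}=\emptyset$. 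The theorem is the case $s=\emptyset$ for analytic $\idl{X}$.

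\textbf{Diagonalization lemma.} Suppose that for each finite $s$ we are given $A_s\in\U$. Since $\U$ is a $P$-point, there is $B\in\U$ with $B\subseteq^* A_s$ for all $s$. Choose $q(n)$ so large that $B\setminus q(n)\subseteq A_s$ for every $s\subseteq n+1$, and make $q$ increasing. Then for every $q$-rare $X\subseteq B$ and every $n\in X$, the part of $X$ above $n$ lies in $A_{X\cap(n+1)}$. This is exactly the conclusion a diagonal intersection gives in the selective case.

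\textbf{Open sets.} For open (and then closed) $\idl{X}$ we run the Nash-Williams accept/reject argument. Say $H$ \emph{accepts} $s$ if $[s,H]^q\subseteq\idl{X}$ for some $q$, and $H$ \emph{rejects} $s$ if no $\U$-large $H'\subseteq H$ accepts $s$. For each $s$ we first obtain $A_s$ that either accepts or rejects $s$, and we combine these with the diagonalization lemma. The ultrafilter property is then used to show that if $B$ rejects $s$, then $\{n: B \text{ rejects } s\cup\{n\}\}\in\U$. Iterating this, with a further diagonalization, yields $H\in\U$ and $q$ such that every $q$-rare $X\subseteq H$ has no initial segment accepted. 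Openness of $\idl{X}$ then gives $[H]^q\cap\idl{X}=\emptyset$. Each time we combine two witnesses $q_1,q_2$ we take their pointwise maximum, which preserves rarity.

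\textbf{Analytic sets.} We will show that the $q$-$\U$-Ramsey sets contain the open sets and are closed under complements (immediate) and under the Souslin operation. For the Souslin operation we follow the Ellentuck/Mathias proof. Write $\idl{X}=\bigcup_{x\in\bairesp}\bigcap_n \idl{X}_{x\uphar n}$. At each node $t$ we shrink to a $\U$-large set on which the corresponding piece is either homogeneously in or homogeneously out, and we use the fact that the sets that are ``$q$-$\U$-null'' (those with a $\U$-large cube $[s,H]^q$ avoiding them below every condition) form a $\sigma$-ideal. The $\sigma$-additivity is again the diagonalization lemma: countably many $H_k\in\U$ are replaced by a single pseudointersection, and the finite errors are absorbed into $q$.

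\textbf{Main obstacle.} I expect the hardest step to be closure under the Souslin operation, specifically checking that the hull-type approximation survives when each stage only yields $\subseteq^*$-containment. If this becomes unwieldy, the fallback is a forcing argument. Consider Mathias forcing $\mathbb{M}_\U$, and show using the diagonalization lemma that it has ``$q$-pure decision'': for every condition $(s,A)$ and every sentence there are $H\subseteq A$ in $\U$ and $q$ such that every $q$-rare real through $(s,H)$ that is generic enough decides the sentence the same way. Then apply Shoenfield absoluteness to $\Sigma^1_1$ membership in $\idl{X}$, arguing in a countable elementary submodel that all $q$-rare subsets of a suitable $H$ are sufficiently generic over it.
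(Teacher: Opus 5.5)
The paper gives no proof of this statement; it imports Taylor's theorem from Blass. Your Ellentuck-style plan would be a self-contained alternative, and its core idea is right: a $P$-point pseudointersection together with a rarity function that absorbs the finite errors is exactly what replaces diagonal intersections, and it does drive both the accept/reject argument and the $\sigma$-additivity of the null sets. As written, however, one step fails and one is missing.

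\textbf{The cubes are defined wrongly.} Suppose $[s,H]^q$ consists of the $q$-rare $X$ with initial segment $s$ and $X\setminus s\subseteq H$. Then rarity is also imposed inside $s$. Since $q$ is existentially quantified in ``accepts'' (and in $q$-$\U$-Ramsey), every $s$ with $|s|\geq 2$ is accepted vacuously: take $q(\min s)\geq\max s$. So your key step ``if $B$ rejects $s$ then $\{n:B\text{ rejects }s\cup\{n\}\}\in\U$'' already fails for $|s|=1$. With $\idl{X}=\emptyset$, every $\{a\}$ is rejected and every $\{a,n\}$ is accepted. The real obstruction is that passing from $[s,H]^q$ to $[s\cup\{n\},H_n]^{q_n}$ would require $q_n(\max s)<n$, which no diagonal $q$ can enforce. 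The fix is to test rarity only on pairs $m<k$ in $X$ with $k>\max s$. Then the combination goes through. Suppose $H_n\in\U$ accepts $s\cup\{n\}$ via $q_n$ for all $n\in N\in\U$. Take $H\in\U$ with $H\subseteq N$ and $H\subseteq^*H_n$ for all $n\in N$. Take $q(j)\geq\max\{q_n(m):m,n\leq j,\ n\in N\}$, also large enough that $H\setminus q(n)\subseteq H_n$. Then every $Y\in[s,H]^q$ lies in $[s\cup\{n\},H_n]^{q_n}$ for $n=\min(Y\setminus s)$.

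\textbf{The Souslin step as described does not work.} Homogenizing each piece $\idl{X}_t$ node by node gives no control over $\bigcup_x\bigcap_n\idl{X}_{x\uphar n}$. What is actually needed is envelopes, and the clean way to get them is to recast everything as a Baire-property statement.
\begin{enumerate}
\item Let $\tau$ be the topology on $\ramseysp$ generated by the corrected cubes $[s,H]^q$ with $H\in\U$. These form a base: if $Y$ lies in $[s,H]^q\cap[t,K]^p$ and $s$ is an initial segment of $t$, then $Y\in[t,H\cap K]^{\max(p,q)}\subseteq[s,H]^q\cap[t,K]^p$. Also, $\tau$ refines the product topology.
\item Your accept/reject argument works verbatim for $\tau$-open $O$. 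A point of $O$ lies in some $[t,K]^p\subseteq O$ with $K\in\U$, and then $B\cap K$ accepts $t$. Hence $\tau$-nowhere dense sets are null.
\item Your fusion through the diagonalization lemma shows that countable unions of null sets are null. For each finite $t$, choose $[t,A_t]^{p_t}$ missing $\idl{X}_k$ for all $k\leq\max t$. Then diagonalize with $q(j)$ dominating $p_t(m)$ and the thresholds for $B\subseteq^*A_t$, for all $t\subseteq j+1$ and $m\leq j$. So every set with the $\tau$-Baire property is $q$-$\U$-Ramsey.
\item The Baire property in an arbitrary topological space is closed under the Souslin operation, with envelopes coming from the Banach category theorem. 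Product-closed sets are $\tau$-closed, so analytic sets are $q$-$\U$-Ramsey.
\end{enumerate}
In this formulation all $\subseteq^*$ bookkeeping is confined to the fusion lemma. Your worry about hulls surviving $\subseteq^*$-containment disappears, and the forcing fallback is unnecessary.
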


	Note that by letting $ q'(n): = n+1+ \max\{q(k):k\leq n\} $, we can assume that the $ q $ is strictly increasing and $ \forall n\in\omega, q(n)>n $, as in this case for any $ H\in\ramseysp $ we have $ [H]^{q'}\subseteq [H]^q $.
	
	The proof of the following theorem is also motivated by the trace argument in \cite[Section~3]{raghavan2010van}.

\begin{thm}\label{thm:no_analytic_fsp_fin}
	There is no analytic \FSP $ \mathrm{Fin} $-MED family in $ \bairesp $.
\end{thm}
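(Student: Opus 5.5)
The plan is to argue by contradiction. Suppose $\idl{E}\subseteq\bairesp$ is an analytic \FSP $\mathrm{Fin}$-MED family. By Lemma \ref{lem:fsp_global_ultrafilter} there is an ultrafilter $\U$ on $\omega$ with $\mathrm{Fin}^*\subseteq\U$, so $\U$ is non-principal, such that $\idl{E}$ is $\U^*$-MED. Concretely, for every $h\in\bairesp$ there is $f\in\idl{E}$ with $\Eq{\omega}(h,f)\in\U$. Since $\idl{E}$ is $\mathrm{Fin}$-e.d., this $f$ is unique. The idea is to turn this uniform ultrafilter maximality into an analytic set of traces, as in the trace argument of \cite[Section~3]{raghavan2010van}, and then apply Taylor's theorem (Theorem \ref{thm:p_point_sparse_homogeneity}).

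For a fixed $g\in\bairesp$, the set
\[
\idl{X}_g=\{Y\in\ramseysp:\exists f\in\idl{E}\ Y\subseteq^*\Eq{\omega}(g,f)\}
\]
is analytic. The traces $\Eq{\omega}(g,f)$ for $f\in\idl{E}$ are pairwise almost disjoint, and exactly one of them, say $A_g$, lies in $\U$.

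Assume first that $\U$ is a $P$-point. Taylor's theorem gives $H\in\U$ and a strictly increasing $q$ with $q(n)>n$ such that $[H]^q\subseteq\idl{X}_g$ or $[H]^q\cap\idl{X}_g=\emptyset$. The second alternative is impossible, because $H\cap A_g\in\U$ is infinite and contains a $q$-rare subset. So $[H]^q\subseteq\idl{X}_g$. I then claim $H\subseteq^*A_g$. If $H\setminus A_g$ were infinite, I would pick elements alternately from $H\cap A_g$ and from $H\setminus A_g$, spaced by $q$. This gives a $q$-rare $Y\subseteq H$ that meets $A_g$ infinitely often, so by almost disjointness it can only be almost contained in $A_g$; but it is not, since it meets $H\setminus A_g$ infinitely often. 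Hence every $g$ has a set $H_g\in\U$ on which $g$ agrees with its unique catcher $f_g\in\idl{E}$ modulo finite.

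To reach a contradiction, I would play this off against many $g$ at once. Take a perfect set of reals $g_x$, $x\in 2^\omega$, that pairwise agree on a fixed infinite set and differ elsewhere, for instance $g_x=\encfunc(x)$ modified on a fixed $U$-small set. Then use that a single $q$-rarity structure, obtained from Taylor's theorem applied to a single analytic set on a product-coded space, forces two distinct members of $\idl{E}$ to agree on an infinite set. The cleanest version I would aim for is to apply Taylor's theorem once to the analytic set of $Y$ such that $\encfunc$-codes restricted to $Y$ are almost caught by one member of $\idl{E}$, which avoids a separate $g$-dependent $H$.

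The main obstacle is the case where $\U$ is not a $P$-point, since Taylor's theorem is only available for $P$-points. There I would use the partition characterization to get $\pi:\omega\to\omega$ that is neither finite-to-one nor constant on any set in $\U$. I would then restrict to test functions $h$ of the form $k\circ\pi$ combined with codes on the fibers, in an attempt to transfer the problem either to the image ultrafilter $\pi_*\U$ or to a fiberwise argument. If that fails, the fallback is to replace Taylor's theorem by a Mathias/Raghavan-style direct forcing-free trace argument that works for every non-principal $\U$.
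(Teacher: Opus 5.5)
Your proposal has a genuine gap: it never actually produces a contradiction, and the case you flag as the main obstacle does not arise. You do not need to treat non-$P$-points, and your fallbacks there are not arguments. The ultrafilter from Lemma~\ref{lem:fsp_global_ultrafilter} is automatically a $P$-point. This uses only that $\idl{E}$ is infinite (a finite $\mathrm{Fin}$-e.d.\ family is never maximal) and $\U^*$-MED. Given a partition $\omega=\bigsqcup_n P_n$ with every $P_n\notin\U$, choose distinct $f_n\in\idl{E}$, glue $h\uphar P_n=f_n\uphar P_n$, and take $f\in\idl{E}$ with $\Eq{\omega}(h,f)\in\U$. Then $\Eq{\omega}(h,f)\cap P_m\subseteq\Eq{\omega}(f_m,f)$ is finite whenever $f_m\neq f$. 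Deleting the at most one piece $P_m$ with $f_m=f$ leaves a set in $\U$ meeting every piece finitely. This is the paper's first step, and it makes Taylor's theorem available unconditionally.

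In the $P$-point case, applying Taylor's theorem to $\idl{X}_g$ for a single $g$ gives nothing. The set $A_g=\Eq{\omega}(g,f_g)$ is already a member of $\U$ on which $g$ agrees with its catcher, so deriving $H\subseteq^*A_g$ is a weaker form of what you started with. The step that must carry the proof (``play this off against many $g$ at once'') is left unspecified, and the perfect family $g_x$ does not tell you which analytic set fails to be homogeneous.

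What is missing is an analytic set whose $q$-rare homogeneity can be refuted from both sides. Its positive side must come from applying $\U^*$-maximality to a test function built \emph{after} $H$ and $q$ are known. The paper fixes distinct $f_n\in\idl{E}$, reads $x\in\ramseysp$ as pairs $(p_i,r_i)=(x(2i),x(2i+1))$, and sets $\idl{P}=\{x:\exists f\in\idl{E}\ \{(r_i,f_{p_i}(r_i)):i\in\omega\}\subseteq\gr(f)\}$.

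Given $H\in\U$ and $q$ from Taylor's theorem, let $\ell(m)=\max\{n\in H:q(n)<m\}$ and catch the diagonal function $h(m)=f_{\ell(m)}(m)$ by some $f\in\idl{E}$ on a set in $\U$. Choosing sparse $m_k$ in $H\cap\Eq{\omega}(h,f)$ makes $y=\{\ell(m_0)<m_0<\ell(m_1)<m_1<\cdots\}$ $q$-rare and in $\idl{P}$. Then prepend to a tail of $y$ a $q$-spaced pair $a<b$ from $H$ with $f_a\neq f$ and $f_a(b)\neq f(b)$. This gives a $q$-rare $z\subseteq H$ outside $\idl{P}$: the tail forces any catcher to agree with $f$ infinitely often, hence to be $f$, while the pair forces disagreement with $f$ at $b$.
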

\begin{proof}
	If not, let $ \idl{E}\subseteq\bairesp $ be analytic and \FSP $ \mathrm{Fin} $-MED. By Lemma \ref{lem:fsp_global_ultrafilter}, there is a non-principal ultrafilter $ \U $ on $ \omega $ such that $ \idl{E} $ is $ \U^* $-MED. Note that for any ideal $ \idl{J} $ which we assumed proper, a finite $ \idl{J} $-e.d. family in $ \bairesp $ will not be $ \idl{J} $-MED.
	
	\begin{claim}\label{claim:u_p_point}
		The ultrafilter $ \U $ is a $ P $-point.
	\end{claim}
	\begin{claimproof}{\ref{claim:u_p_point}}
		By the partition characterization above, let $ \omega=\bigsqcup_{n\in\omega}P_n $ with $ P_n\notin\U $ for every $ n\in\omega $.  Choose distinct $ f_n\in\idl{E} $, and define $ h\in\bairesp $ by $\forall n\in\omega, \ h\uphar P_n=f_n\uphar P_n $. Since $ \idl{E} $ is $ \U^* $-MED, choose $ f\in\idl{E} $ such that $ \Eq{\omega}(h,f)\in\U $. There is at most one $ n $ such that $ f=f_{n} $. If such $ n $ exists, let $ B=\Eq{\omega}(h,f)\setminus P_{n} $; otherwise let $ B=\Eq{\omega}(h,f) $. Then $ B\in\U $. If $ n $ exists, for every $ m\neq n$,
		\[
		B\cap P_m\subseteq\Eq{\omega}(f_m,f),
		\]
		which is finite, and $ B\cap P_{n} $ is empty. If $ n $ does not exist, then $ \forall n\in\omega, \ B\cap P_n $ is finite. Thus $ \U $ is a $ P $-point.
	\end{claimproof}
	
	Fix distinct $ f_n\in\idl{E} $, $ n\in\omega $. For $ x\in\ramseysp $, define $ p_i^x=x(2i),\ r_i^x=x(2i+1) $, and
	\[
	\Phi(x)=\{(r_i^x,f_{p_i^x}(r_i^x)):i\in\omega\}.
	\]
	Let
	\[
	\idl{P}=\{x\in\infsub{\omega}:\exists f\in\idl{E}\ \Phi(x)\subseteq\gr(f)\},
	\]
	which is analytic.
	
	By Claim \ref{claim:u_p_point} and Theorem \ref{thm:p_point_sparse_homogeneity}, choose $ H\in\U $ and an strictly increasing $ q:\omega\to\omega $ with $ q(n)>n $ for every $ n\in\omega $ such that either $ [H]^q \subseteq \idl{P} $ or $ [H]^q \cap \idl{P} = \emptyset $.
	
	\begin{claim}\label{claim:fsp_direct_hiding}
		There is a $ y\in [H]^q $ such that $ y\in\idl{P} $.
	\end{claim}
	\begin{claimproof}{\ref{claim:fsp_direct_hiding}}
		Let
		\[
		H'=\{m\in H:\exists n\in H, \ q(n)<m\}.
		\]
		The set $ H' $ contains a cofinite subset of $ H $, so $ H'\in\U $. For $ m\in H' $, define
		\[
		\ell(m)=\max\{n\in H:q(n)<m\}.
		\]
		Since $ q(n)>n $, we have $ \ell(m)< q(\ell(m))<m $. Also, $ \lim_{m\in H'}\ell(m)\to\infty $. This is because $ m < m_1 \in H' $ implies $ \ell(m)\leq \ell(m_1) $, and for any $ m\in H' $, if $ n $ is the smallest in $ H $ larger than $ \ell(m) $, then for any $ m_1 \in H' \cap (m,\infty) $ satisfying $ q(n)< m_1 $, we have $ \ell(m)< \ell(m_1) $.
		
		Define $ h\in\bairesp $ by $ h(m)=f_{\ell(m)}(m) $ for $ m\in H' $, and define it arbitrarily outside $ H' $. Since $ \idl{E} $ is $ \U^* $-MED, there is $ f\in\idl{E} $ such that $ \Eq{\omega}(h,f)\in\U $. Recursively choose $ m_0<m_1<\cdots $ in $ H' \cap \Eq{\omega}(h,f) $ such that $ \forall k\in \omega, \ q(m_k)< \ell(m_{k+1})$. So we have $ \forall k\in\omega, \ \ell(m_k)<m_k < q(m_k) < \ell(m_{k+1})$, and then let $ y=\{\ell(m_0)<m_0<\ell(m_1)<m_1<\cdots\} $.
		
		By the definition of $ \ell $, we have $ q(\ell(m_k))<m_k $. Also, by the definition of $ y $, we have $ q(m_k)<\ell(m_{k+1}) $. Thus $ y $ is $ q $-rare.
		
		Now, for every $ k\in\omega $, we have $ \Phi(y)(m_k) =  f_{\ell(m_k)}(m_k)=h(m_k)=f(m_k)$, so $ \Phi(y)\subseteq\gr(f) $. Thus $ y\in\idl{P} $.
	\end{claimproof}
	
	\begin{claim}\label{claim:fsp_direct_escape}
		There is a $ z\in [H]^q $ such that $ z\notin\idl{P} $.
	\end{claim}
	\begin{claimproof}{\ref{claim:fsp_direct_escape}}
		Let $ y=\{p_0<r_0<p_1<r_1<\cdots\} $ and $ f\in\idl{E} $ be as in Claim \ref{claim:fsp_direct_hiding}, so that $ \forall k\in\omega, \ f_{p_k}(r_k)=f(r_k) $. Let $ a\in H $ such that $ f_a\neq f $. We have that $ \Eq{\omega}(f_a,f) $ is finite, so let $ b\in H $ such that $ q(a)< b $ and $ f_a(b)\neq f(b) $. Now, let $ k_0\in\omega $ be the smallest such that $q(b)< p_{k_0} $, and let
		\[
		z=\{a<b<p_{k_0}<r_{k_0}<p_{k_0+1}<r_{k_0+1}<\cdots\}.
		\]
		We have that $ z $ is also $ q $-rare.
		
		Suppose $ z\in\idl{P} $, so there is $ g\in\idl{E} $ such that $ \Phi(z)\subseteq\gr(g) $. We have
		\[
		\begin{cases}
			\Phi(z)(\mathmakebox[\widthof{$b_k$}][c]{b}) = f_a(b),\\
			\Phi(z)(r_k) = f_{p_k}(r_k), \forall k \geq k_0.
		\end{cases}
		\]
		Thus $ \Eq{\omega}(f,g) $ is infinite, which means $ g=f $. But by assumption $ g(b) = \Phi(z)(b) = f_a(b) \neq f(b) $, a contradiction. Thus $ z\notin\idl{P} $.
	\end{claimproof}
	
	Claims \ref{claim:fsp_direct_hiding} and \ref{claim:fsp_direct_escape} give $ q $-rare infinite subsets of $ H $ with opposite $ \idl{P} $-colors, contradicting the choice of $ H $ and $ q $.
\end{proof}

\begin{cor}
	There is no non-principal ultrafilter $ \U $ on $ \omega $ such that there is an analytic subset $\idl{E}\subseteq \bairesp $ which is eventually different and $ \U^* $-maximal.
\end{cor}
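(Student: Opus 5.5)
The plan is to reduce the corollary directly to Theorem \ref{thm:no_analytic_fsp_fin} by way of Lemma \ref{lem:fsp_global_ultrafilter}. Suppose, towards a contradiction, that $\U$ is a non-principal ultrafilter on $\omega$ and $\idl{E}\subseteq\bairesp$ is analytic, eventually different (that is, $\mathrm{Fin}$-e.d.) and $\U^*$-MED. Here we work with $F$ constantly equal to $\omega$ on $D=\omega$, so that $\prodspc{F}=\bairesp$, and we take $\idl{J}=\mathrm{Fin}$.

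First we check the hypothesis of Lemma \ref{lem:fsp_global_ultrafilter}(2). Since $\U$ is non-principal, it contains every cofinite set, so $\mathrm{Fin}^*\subseteq\U$. Moreover $\idl{E}$ is $\mathrm{Fin}$-e.d. by assumption and $\U^*$-MED by assumption. Thus clause (2) of the lemma holds with $\idl{J}=\mathrm{Fin}$.

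The implication (2)$\Rightarrow$(1) of the lemma then makes $\idl{E}$ an \FSP $\mathrm{Fin}$-MED family in $\bairesp$. Explicitly, the witness is $M(h)=\{f_h\}$ with $\Eq{\omega}(h,f_h)\in\U\subseteq\mathrm{Fin}^+$. Since $\idl{E}$ is analytic, this contradicts Theorem \ref{thm:no_analytic_fsp_fin}.

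There is no genuine obstacle here. The only point to watch is that "eventually different" must be read as $\mathrm{Fin}$-e.d. rather than $\U^*$-e.d. This is what makes the lemma (which presupposes a $\idl{J}$-e.d. family) applicable with $\idl{J}=\mathrm{Fin}$. It is also exactly what the proof of Theorem \ref{thm:no_analytic_fsp_fin} uses, since there finiteness of $\Eq{\omega}(f_a,f)$ is needed.
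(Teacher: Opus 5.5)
Your proposal is correct and follows the paper's own argument: non-principality gives $\mathrm{Fin}^*\subseteq\U$, so Lemma \ref{lem:fsp_global_ultrafilter} makes $\idl{E}$ an analytic \FSP $\mathrm{Fin}$-MED family, which contradicts Theorem \ref{thm:no_analytic_fsp_fin}. Reading ``eventually different'' as $\mathrm{Fin}$-e.d.\ is correct and matches the paper; since $\mathrm{Fin}\subseteq\U^*$, this also yields the $\U^*$-e.d.\ part of $\U^*$-MEDness.
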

\begin{proof}
	If such $ \U $ and $ \idl{E} $ exist, then by Lemma \ref{lem:fsp_global_ultrafilter} the family $ \idl{E} $ is analytic \FSP $ \mathrm{Fin} $-MED, a contradiction to Theorem \ref{thm:no_analytic_fsp_fin}.
\end{proof}

\section{Ultrafilter Maximality}

\subsection{A Stone space hierarchy}

Throughout this section, let $ F $ be a partial function, let $ D=\dom(F) $, let $ \idl{J} $ be an ideal on $ D $, and let $ \idl{E}\subseteq\prodspc{F} $ be $ \idl{J} $-e.d.. Lemma \ref{lem:fsp_global_ultrafilter} shows that \FSP maximality is exactly maximality with respect to one ultrafilter extending $ \idl{J}^* $. We now place this together with ordinary maximality, finite productivity, $ \omega $-centered productivity and \VD maximality in a common framework.

Let $ \beta D $ be the Stone space of ultrafilters over $ D $, and define
\begin{equation*}
	\beta_\idl{J}D :=\{\U\in \beta D:\idl{J}^*\subseteq\U\}\\
	=\bigcap_{A\in \idl{J}^*}\{\U\in \beta D: A\in \U\},
\end{equation*}
which is a non-empty compact zero-dimensional Hausdorff space as a subspace of $ \beta D $. For $ A\subseteq D $, define $ \what{A}:= \{\U\in \beta_\idl{J} D: A\in\U\}$, which is a clopen subset of $ \beta_\idl{J} D $.

 For $ h\in\prodspc{F} $, define
\[
O_h(\idl{E})=\{\U\in\beta_\idl{J}D:\exists f\in\idl{E}\ \Eq{D}(h,f)\in\U\}=\bigcup_{f\in\idl{E}}\what{\Eq{D}(h,f)},
\]
which is open.

	\begin{fact}\label{fact:stone_basic}
	The following hold.
	\begin{enumerate}[label=(\arabic*),font=\upshape]
		\item For every $ A\subseteq D $, $ 	\what{A}\neq\emptyset $ if and only if $ A\in\idl{J}^+ $.
		
		\item For every $ 1\leq k<\omega $ and $ A_0,\ldots,A_{k-1}\subseteq D $, $ \bigcap_{i<k}\what{A_i}=\what{\bigcap_{i<k}A_i} $.
		
		\item For every $ X\subseteq D $ and $ h\in\prodspc{F} $, 
		\[ \what{X}\cap O_h(\idl{E})=\bigcup_{f\in\idl{E}}\what{X\cap\Eq{D}(h,f)}. \] 
		
		\item For every $ 1\leq k<\omega $ and $ h_0,\ldots,h_{k-1}\in\prodspc{F} $,
		\[
		\bigcap_{i<k}O_{h_i}(\idl{E})=\bigcup_{\la f_0,\ldots,f_{k-1}\ra\in\idl{E}^k}\what{\bigcap_{i<k}\Eq{D}(h_i,f_i)}.
		\]
		
		\item For every fixed $ h\in\prodspc{F} $, the family $ \{\what{\Eq{D}(h,f)} $: $ f\in\idl{E}\} $ is pairwise disjoint.
	\end{enumerate}
\end{fact}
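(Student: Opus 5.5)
All five items follow by unwinding the definitions of $\beta_\idl{J}D$, $\what{A}$ and $O_h(\idl{E})$; no earlier result beyond the basic ultrafilter facts is needed. I will prove them in order, with (1) and (2) as the base and (3)--(5) obtained by combining them with the definition of $O_h(\idl{E})$.

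For (1), suppose $\U\in\what{A}$. If $A\in\idl{J}$, then $D\setminus A\in\idl{J}^*\subseteq\U$, so both $A$ and $D\setminus A$ lie in $\U$, which is impossible. Hence $A\in\idl{J}^+$. Conversely, let $A\in\idl{J}^+$. The family $\idl{J}^*\cup\{A\}$ has the finite intersection property. Indeed, for $J\in\idl{J}$ we have $A\cap(D\setminus J)=A\setminus J\neq\emptyset$, since otherwise $A\subseteq J\in\idl{J}$. Any ultrafilter extending this family lies in $\what{A}$.

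For (2), an ultrafilter contains each of $A_0,\ldots,A_{k-1}$ if and only if it contains their intersection. Both are closed under finite intersections and upward closed.

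For (3), I expand the definition: $\what{X}\cap O_h(\idl{E})=\bigcup_{f\in\idl{E}}\bigl(\what{X}\cap\what{\Eq{D}(h,f)}\bigr)$. Applying (2) with $k=2$ gives the stated formula.

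For (4), I distribute the finite intersection over the unions: $\bigcap_{i<k}\bigcup_{f\in\idl{E}}\what{\Eq{D}(h_i,f)}=\bigcup_{\la f_i\ra\in\idl{E}^k}\bigcap_{i<k}\what{\Eq{D}(h_i,f_i)}$. Then (2) turns each inner intersection into $\what{\bigcap_{i<k}\Eq{D}(h_i,f_i)}$.

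Item (5) is the only place the $\idl{J}$-e.d. hypothesis enters, though it is still short. Take distinct $f,g\in\idl{E}$. Then $\Eq{D}(h,f)\cap\Eq{D}(h,g)\subseteq\Eq{D}(f,g)\in\idl{J}$, so the intersection is in $\idl{J}$. By (2), $\what{\Eq{D}(h,f)}\cap\what{\Eq{D}(h,g)}$ is the hat of this intersection, and by (1) that hat is empty.

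I expect no real obstacle. The only step needing care is the nonemptiness direction of (1), where one must check the finite intersection property before extending to an ultrafilter.
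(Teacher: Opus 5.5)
Your proposal is correct and follows essentially the same route as the paper: the same finite-intersection-property extension argument for (1), distribution of the finite intersection over the unions combined with (2) for (3) and (4), and the inclusion $\Eq{D}(h,f)\cap\Eq{D}(h,g)\subseteq\Eq{D}(f,g)\in\idl{J}$ for (5). The only cosmetic difference is that in (5) you conclude via (2) and (1), whereas the paper takes an ultrafilter lying in both sets and derives a contradiction directly.
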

\begin{proof}
	\phantom{a}
	
	\begin{enumerate}[label=(\arabic*),font=\upshape]
		\item By $ \what{A}\neq \emptyset $, there is $ \U \supseteq \idl{J}^* $ with $ A\in\U $. We have that $ \U \cap \idl{J} = \emptyset$, so $ A\in \idl{J}^+ $.  Conversely, if $ A\in\idl{J}^+ $, then $ \idl{J}^*\cup\{A\} $ has the finite intersection property, which generates a proper filter. We extend it to some $ \U\supseteq \idl{J}^*\cup\{A\}$. So this $ \U $ has $ \U \in \what{A} $.
		
		\item Straightforward.
		
		\item by (2), $ \what{X}\cap O_h(\idl{E})=\bigcup_{f\in\idl{E}}\left(\what{X}\cap\what{\Eq{D}(h,f)}\right)=\bigcup_{f\in\idl{E}}\what{X\cap\Eq{D}(h,f)} $.
		
		\item This is because
		\[
		\begin{aligned}
			\bigcap_{i<k}O_{h_i}(\idl{E})
			&=\bigcap_{i<k}\bigcup_{f\in\idl{E}}\what{\Eq{D}(h_i,f)}\\
			&=\bigcup_{\la f_0,\ldots,f_{k-1}\ra\in\idl{E}^k}\bigcap_{i<k}\what{\Eq{D}(h_i,f_i)}\\
			&=\bigcup_{\la f_0,\ldots,f_{k-1}\ra\in\idl{E}^k}\what{\bigcap_{i<k}\Eq{D}(h_i,f_i)}.
		\end{aligned}
		\]
		
		\item Suppose $ f\neq g\in\idl{E} $ and some $ \U\in \what{\Eq{D}(h,f)}  \cap   \what{\Eq{D}(h,g)} $. Then $ \Eq{D}(h,f)\cap\Eq{D}(h,g) \in \U $, while $ \Eq{D}(h,f)\cap\Eq{D}(h,g) \subseteq \Eq{D}(f,g)\in\idl{J} $, a contradiction.
	\end{enumerate}
\end{proof}

\begin{thm}\label{thm:stone_hierarchy}
	Let $ \idl{E}\subseteq\prodspc{F} $ be $ \idl{J} $-e.d.. Then the following hold.
	\begin{enumerate}[label=(\arabic*),font=\upshape]
		\item The family $ \idl{E} $ is $ \idl{J} $-MED if and only if $\forall h\in\prodspc{F}, \ O_h(\idl{E})\neq\emptyset $.

		\item The family $ \idl{E} $ is \FP $ \idl{J} $-MED if and only if $ \{O_h(\idl{E}):h\in\prodspc{F}\} $ has the finite intersection property.
		
		\item If $ \idl{E} $ is \omegaCP $ \idl{J} $-MED, then every countable subfamily of $ \{O_h(\idl{E}):h\in\prodspc{F}\} $ has non-empty intersection. If $ \idl{J} $ is weakly $ P^+ $, then the converse also holds.
		
		\item The family $ \idl{E} $ is \FSP $ \idl{J} $-MED if and only if $ \bigcap_{h\in\prodspc{F}}O_h(\idl{E})\neq\emptyset $.
		
		\item The family $ \idl{E} $ is \VD $ \idl{J} $-MED if and only if $ \forall  h\in\prodspc{F} $, $ O_h(\idl{E}) $ is dense in $ \beta_\idl{J}D $.
	\end{enumerate}
\end{thm}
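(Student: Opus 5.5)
The plan is to reduce every item to Fact \ref{fact:stone_basic}, reading off each Stone-space condition as a combinatorial statement about the sets $\Eq{D}(h,f)$.

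For (1), Fact \ref{fact:stone_basic}(1) gives $O_h(\idl{E})\neq\emptyset$ if and only if $\Eq{D}(h,f)\in\idl{J}^+$ for some $f\in\idl{E}$, which is $\idl{J}$-maximality at $h$.

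For (2), Fact \ref{fact:stone_basic}(4) together with (1) shows that $\bigcap_{i<k}O_{h_i}(\idl{E})\neq\emptyset$ if and only if there are $f_0,\dots,f_{k-1}\in\idl{E}$ with $\bigcap_{i<k}\Eq{D}(h_i,f_i)\in\idl{J}^+$. This is exactly the definition of \FP.

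For (4), I invoke Lemma \ref{lem:fsp_global_ultrafilter}. A point $\U\in\bigcap_h O_h(\idl{E})$ is an ultrafilter extending $\idl{J}^*$ such that every $h$ has some $f\in\idl{E}$ with $\Eq{D}(h,f)\in\U$. Since $\idl{J}\subseteq\U^*$, the family $\idl{E}$ is also $\U^*$-e.d., so it is $\U^*$-MED. Conversely, such a $\U$ lies in every $O_h(\idl{E})$.

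For (5), density of $O_h(\idl{E})$ means it meets every nonempty basic clopen $\what{X}$ with $X\in\idl{J}^+$. By Fact \ref{fact:stone_basic}(3) and (1), this holds if and only if for every $X\in\idl{J}^+$ there is $f\in\idl{E}$ with $X\cap\Eq{D}(h,f)\in\idl{J}^+$.
\begin{itemize}
\item For the forward direction, given $h\in\prodspc{F}$ and $X\in\idl{J}^+$, apply the VD property to $h\uphar X$.
\item For the converse, given $X\in\idl{J}^+$ and $h'\in\prodspc{F\uphar X}$, extend $h'$ arbitrarily to some $h\in\prodspc{F}$. Note that $\Eq{X}(h',f)=X\cap\Eq{D}(h,f)$. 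Restrictions of distinct members of $\idl{E}$ remain $(\idl{J}\uphar X)$-e.d. Also, $\idl{I}^{\idl{J}}_0(\idl{E})\supseteq\idl{J}$ trivially, so this gives equality.
\end{itemize}

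Item (3) is the main point. For the first half, given $\la h_i\ra$, take $B\in\idl{J}^+$ and $f_i$ from \omegaCP. Then $\idl{J}^*\cup\{B\}\cup\{\Eq{D}(h_i,f_i):i\in\omega\}$ has the finite intersection property. Indeed, a finite intersection contains $B\setminus(J\cup\bigcup_{i<k}(B\setminus\Eq{D}(h_i,f_i)))$, which is positive because $B\setminus\Eq{D}(h_i,f_i)\in\idl{J}$. Any ultrafilter extending this family lies in every $O_{h_i}(\idl{E})$.

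For the converse under weak $P^+$, take $\U\in\bigcap_i O_{h_i}(\idl{E})$ and pick $f_i$ with $\Eq{D}(h_i,f_i)\in\U$. Put $X_n=\bigcap_{i\le n}\Eq{D}(h_i,f_i)$. This is decreasing and lies in $\U\subseteq\idl{J}^+$, because $\idl{J}^*\subseteq\U$ forces $\U\cap\idl{J}=\emptyset$. Weak $P^+$ then yields $B\in\idl{J}^+$ with $B\subseteq_{\idl{J}}X_n\subseteq\Eq{D}(h_n,f_n)$ for all $n$, which is the \omegaCP witness. The only care needed is that the choice of $f_i$ goes through one common ultrafilter, which is exactly what the nonempty countable intersection provides.
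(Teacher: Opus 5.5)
Your proposal is correct and follows the paper's proof item by item: each clause reduces to Fact~\ref{fact:stone_basic} or Lemma~\ref{lem:fsp_global_ultrafilter}, and the converse of (3) applies weak $P^+$ to the decreasing sets $\bigcap_{i\le n}\Eq{D}(h_i,f_i)$. The only cosmetic difference is in the first half of (3), where the paper takes any $\U\in\what{B}$ and notes that it already contains each $\Eq{D}(h_i,f_i)$ because $B\setminus\Eq{D}(h_i,f_i)\in\idl{J}$, whereas you extend the filter generated by $\idl{J}^*\cup\{B\}\cup\{\Eq{D}(h_i,f_i):i\in\omega\}$; the two arguments are equivalent.
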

\begin{proof}
	\phantom{a}
	
	\begin{enumerate}[label=(\arabic*),font=\upshape]
		\item The family $ \idl{E} $ is $ \idl{J} $-MED if and only if $\forall  h\in\prodspc{F}\ \exists f\in\idl{E} $ such that $ \Eq{D}(h,f)\in\idl{J}^+ $. By $ O_h(\idl{E})=\bigcup_{f\in\idl{E}}\what{\Eq{D}(h,f)} $
		and Fact \ref{fact:stone_basic}(1), it is non-empty if and only if $\exists f\in \idl{E} \  \Eq{D}(h,f)\in\idl{J}^+ $.
		
		\item Fix $ 1\leq k<\omega $, $ h_0,\ldots,h_{k-1}\in\prodspc{F} $ and check Fact \ref{fact:stone_basic}(4). By Fact \ref{fact:stone_basic}(1), the right hand side is non-empty if and only if there are $ f_0,\ldots,f_{k-1}\in\idl{E} $ such that $ \bigcap_{i<k}\Eq{D}(h_i,f_i)\in\idl{J}^+ $. Thus every finite intersection of members of $ \{O_h(\idl{E}):h\in\prodspc{F}\} $ is non-empty if and only if $ \idl{E} $ is \FP $ \idl{J} $-MED.
		
		\item \begin{itemize}
				\item[i.] Suppose $ \idl{E} $ is \omegaCP $ \idl{J} $-MED, and let $ \la h_i:i\in\omega\ra $ be a sequence in $ \prodspc{F} $. There are $ B\in\idl{J}^+ $ and $ f_i\in\idl{E} $ such that $ \forall i\in\omega \  B\subseteq_{\idl{J}}\Eq{D}(h_i,f_i) $. Choose a $ \U\in\what{B} $. For every $ i\in\omega $, we have
				\[
				B^\compl \cup \Eq{D}(h_i,f_i) = ( B\setminus\Eq{D}(h_i,f_i) )^\compl\in \U.
				\]
				Since $ B^\compl\in\U^* $, we have $ \Eq{D}(h_i,f_i)\in\U $, i.e. the $ f_i $ witnesses $ \U\in O_{h_i}(\idl{E}) $. So for this $ \U $ we have $ 
				\U\in\bigcap_{i\in\omega}O_{h_i}(\idl{E}) $, i.e. every countable subfamily has non-empty intersection.
			
				\item[ii.] Conversely, suppose $ \idl{J} $ is weakly $ P^+ $ and every countable subfamily of $ \{O_h(\idl{E}):h\in\prodspc{F}\} $ has non-empty intersection. Let $ \la h_i:i\in\omega\ra $ be a sequence in $ \prodspc{F} $, and choose $ \U\in\bigcap_{i\in\omega}O_{h_i}(\idl{E}) $. For every $ i\in\omega $, there is $ f_i\in\idl{E} $ such that $ 
				\Eq{D}(h_i,f_i)\in\U $. For any $ n\in \omega $ let $ A_n=\bigcap_{i\leq n} \Eq{D}(h_i,f_i) \in \U$, which is decreasing and also in $ \idl{J}^+ $. So by weak $ P^+ $ property there is $ B\in\idl{J}^+ $ such that $ \forall n\in\omega \ B\subseteq_{\idl{J}}A_n \subseteq \Eq{D}(h_n,f_n) $. Thus $ \idl{E} $ is \omegaCP $ \idl{J} $-MED.
		\end{itemize}
		
		\item By Lemma \ref{lem:fsp_global_ultrafilter}, the $ \idl{J} $-e.d. family $ \idl{E} $ is \FSP $ \idl{J} $-MED if and only if there is an ultrafilter $ \U\in\beta_\idl{J}D $ such that $ \idl{E} $ is $ \U^* $-MED, which is equivalent to being $ \U^* $-maximal. And $ \U^* $-maximality is equivalent to $ \forall h\in\prodspc{F}\ \exists f\in\idl{E},\  \Eq{D}(h,f)\in\U $, i.e. $ \U\in\bigcap_{h\in\prodspc{F}}O_h(\idl{E}) $. Thus $ \idl{E} $ is \FSP $ \idl{J} $-MED if and only if the total intersection is non-empty.
		
		\item \begin{itemize}
			\item[i.] Suppose $ \idl{E} $ is \VD $ \idl{J} $-MED. Fix $ h\in\prodspc{F} $ and a non-empty basic clopen set $ \what{X} $ in $ \beta_\idl{J}D $. We have $ X\in\idl{J}^+ $. Applying \VD maximality to $ h\uphar X $, choose $ f\in\idl{E} $ such that $ \Eq{X}(h,f)\in\idl{J}^+ $. So $ \what{X\cap\Eq{D}(h,f)}\neq\emptyset $, and by Fact \ref{fact:stone_basic}(3),
			\[
			\emptyset \neq \what{X\cap\Eq{D}(h,f)}\subseteq\what{X}\cap O_h(\idl{E}).
			\]
			Thus every non-empty basic clopen set meets $ O_h(\idl{E}) $, i.e. $ O_h(\idl{E}) $ is dense in $ \beta_\idl{J}D $.
			
			\item[ii.] Conversely, suppose every $ O_h(\idl{E}) $ is dense. Let $ X\in\idl{J}^+ $ and $ p\in\prodspc{F\uphar X} $. Extend $ p $ to some $ h_p\in\prodspc{F} $. As the clopen set $ \what{X} $ is non-empty, by density we have $ \what{X}\cap O_{h_p}(\idl{E})\neq\emptyset $. By Fact \ref{fact:stone_basic}(3), $\exists f\in\idl{E} $ such that $ X\cap\Eq{D}(h_p,f)\in\idl{J}^+ $. This means $ \Eq{X}(p,f)=X\cap\Eq{D}(h_p,f)\in\idl{J}^+ $, and then $ \idl{E} $ is \VD $ \idl{J} $-MED.
		\end{itemize}
	\end{enumerate}
\end{proof}

	Theorem \ref{thm:stone_hierarchy} gives the following implication.

\begin{cor}\label{cor:productive_hierarchy}
	Let $ \idl{E}\subseteq\prodspc{F} $ be $ \idl{J} $-e.d.. Then we have the following implication.
	\[
	\begin{tikzpicture}[>=stealth,baseline=(current bounding box.center)]
		\node (fp)  at (0,0) {$\FP \idl{J}\text{-MED}$};
		\node (med) at (3.2,0) {$\idl{J}\text{-MED}$};
		\node (vd)  at (-3.5,0.75) {$\VD \idl{J}\text{-MED}$};
		\node (fsp) at (-3.5,0) {$\FSP \idl{J}\text{-MED}$};
		\node (ocp) at (-3.5,-0.75) {$\omegaCP \idl{J}\text{-MED}$};
		
		\draw[->] (vd) -- (fp);
		\draw[->] (fsp) -- (fp);
		\draw[->] (ocp) -- (fp);
		\draw[->] (fp) -- (med);
	\end{tikzpicture}
	\]
	
	If $ \idl{J} $ is weakly $ P^+ $, then we have the following implication.
	\[
	\begin{tikzpicture}[>=stealth,baseline=(current bounding box.center)]
		\node (ocp) at (0,0) {$\omegaCP\ \idl{J}\text{-MED}$};
		\node (fp)  at (3,0) {$\FP\ \idl{J}\text{-MED}$};
		\node (med) at (6,0) {$\idl{J}\text{-MED}$};
		\node (vd)  at (-4,0.5) {$\VD\ \idl{J}\text{-MED}$};
		\node (fsp) at (-4,-0.5) {$\FSP\ \idl{J}\text{-MED}$};
		
		\draw[->] (vd) -- (ocp);
		\draw[->] (fsp) -- (ocp);
		\draw[->] (ocp) -- (fp);
		\draw[->] (fp) -- (med);
	\end{tikzpicture}
	\]
\end{cor}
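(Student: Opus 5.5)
The plan is to read every arrow off Theorem \ref{thm:stone_hierarchy} and Proposition \ref{prop:elementary_productive_implications}, working in the compact space $\beta_\idl{J}D$.

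For the first diagram, the arrows \FSP $\to$ \FP, \omegaCP $\to$ \FP and \FP $\to$ $\idl{J}$-MED are exactly Proposition \ref{prop:elementary_productive_implications}. It remains to show \VD $\to$ \FP. By Theorem \ref{thm:stone_hierarchy}(5), each $O_h(\idl{E})$ is a dense open subset of $\beta_\idl{J}D$. A finite intersection of dense open sets is again dense open. Since $\beta_\idl{J}D$ is non-empty ($\idl{J}$ is proper, so $D\in\idl{J}^+$ and Fact \ref{fact:stone_basic}(1) applies), any finite intersection $\bigcap_{i<k}O_{h_i}(\idl{E})$ is non-empty. Theorem \ref{thm:stone_hierarchy}(2) then gives \FP.

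For the second diagram, assume $\idl{J}$ is weakly $P^+$. The arrows \omegaCP $\to$ \FP $\to$ MED are as before, so it suffices to prove \VD $\to$ \omegaCP and \FSP $\to$ \omegaCP. By the converse part of Theorem \ref{thm:stone_hierarchy}(3), it is enough to show in each case that every countable subfamily $\{O_{h_i}(\idl{E}):i\in\omega\}$ has non-empty intersection.

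\FSP is immediate: by Theorem \ref{thm:stone_hierarchy}(4) the total intersection is non-empty, hence so is every countable sub-intersection.

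\VD is the main obstacle. Each $O_{h_i}(\idl{E})$ is dense open, but $\beta_\idl{J}D$ need not be a Baire space in a useful sense, so the Baire category theorem cannot be cited directly. Instead I would run a Baire-type argument using weak $P^+$.

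Start with $X_{-1}=D\in\idl{J}^+$. Recursively, given $X_{i-1}\in\idl{J}^+$, the set $\what{X_{i-1}}$ is non-empty clopen. By density of $O_{h_i}(\idl{E})$ and Fact \ref{fact:stone_basic}(3), there is $f_i\in\idl{E}$ with $X_i:=X_{i-1}\cap\Eq{D}(h_i,f_i)\in\idl{J}^+$. This produces a decreasing sequence in $\idl{J}^+$. Weak $P^+$ gives $B\in\idl{J}^+$ with $B\subseteq_{\idl{J}}X_i\subseteq\Eq{D}(h_i,f_i)$ for all $i$.

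This directly witnesses \omegaCP, so one can even bypass the Stone-space converse. Equivalently, any $\U\in\what{B}$ lies in all $O_{h_i}(\idl{E})$, exactly as in part (3)i of Theorem \ref{thm:stone_hierarchy}.
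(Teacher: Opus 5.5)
Your proof is correct. For every arrow except \VD $\to$ \omegaCP it coincides with the paper's argument. For that arrow the routes differ.

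\textbf{The paper's route.} The paper notes that $\beta_\idl{J}D$ is compact Hausdorff and hence a Baire space. So every countable intersection of the dense open sets $O_{h_i}(\idl{E})$ is non-empty. It then invokes the converse half of Theorem \ref{thm:stone_hierarchy}(3), which is the only place weak $P^+$ enters.

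\textbf{Your Baire remark is mistaken.} You say the Baire category theorem ``cannot be cited directly'' because $\beta_\idl{J}D$ need not be a Baire space. It is one: $\beta_\idl{J}D$ is an intersection of clopen subsets of $\beta D$, hence a non-empty closed subspace of a compact Hausdorff space. The theorem therefore applies verbatim.

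\textbf{Your workaround is still valid.} The step $X_i=X_{i-1}\cap\Eq{D}(h_i,f_i)\in\idl{J}^+$ is just \VD maximality applied to $h_i\uphar X_{i-1}$, so the Stone space is not even needed there. One application of weak $P^+$ to $\langle X_i:i\in\omega\rangle$ then gives a single $B$ witnessing \omegaCP directly.

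\textbf{What each route buys.} Yours is self-contained and purely combinatorial, avoiding both Baire category and Theorem \ref{thm:stone_hierarchy}(3). It is exactly the $\omega$-length case of the recursion in Theorem \ref{thm:pj_vd_fsp}, using $\mdl{p}_{\idl{J}}>\omega$ from Proposition \ref{prop:weak_p_plus_pj}. The paper's version is shorter once the Stone-space framework is in place. It also isolates the exact role of weak $P^+$: turning a point of $\bigcap_i O_{h_i}(\idl{E})$ into one positive pseudointersection.
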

\begin{proof}
	The implications from \FSP and \omegaCP to \FP, and from \FP to ordinary maximality, follow from Proposition \ref{prop:elementary_productive_implications}.
	
	Suppose that $ \idl{E} $ is \VD $ \idl{J} $-MED. By Theorem \ref{thm:stone_hierarchy}(5), every $ O_h(\idl{E}) $ is open dense. Every finite intersection of open dense subsets of $ \beta_\idl{J}D $ is non-empty, so Theorem \ref{thm:stone_hierarchy}(2) implies that $ \idl{E} $ is \FP $ \idl{J} $-MED.
	
	Now suppose $ \idl{J} $ is weakly $ P^+ $. The space $ \beta_\idl{J}D $ is compact Hausdorff and is a Baire space. If $ \idl{E} $ is \VD, then every countable intersection of the open dense sets $ O_h(\idl{E}) $ is non-empty, so Theorem \ref{thm:stone_hierarchy}(3) implies that $ \idl{E} $ is \omegaCP. If $ \idl{E} $ is \FSP, then Theorem \ref{thm:stone_hierarchy}(4) gives a point in the total intersection $ \bigcap_{h\in\prodspc{F}}O_h(\idl{E}) $, so every countable intersection is non-empty, and Theorem \ref{thm:stone_hierarchy}(3) again implies that $ \idl{E} $ is \omegaCP.
\end{proof}

The cardinal invariant $ \mdl{p}_{\idl{J}} $ controls how long a $ \idl{J}^+ $-FIP family can be fused by positive pseudointersections. We use this to obtain a sufficient condition under which \VD maximality implies the global ultrafilter condition.

\begin{thm}\label{thm:pj_vd_fsp}
	Assume that $ |\prodspc{F}|\leq\mdl{p}_{\idl{J}} $. Then every \VD $ \idl{J} $-MED family in $ \prodspc{F} $ is \FSP $ \idl{J} $-MED.
\end{thm}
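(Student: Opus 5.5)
We plan to build, by transfinite recursion along an enumeration of $\prodspc{F}$, a $\subseteq_{\idl{J}}$-decreasing tower of $\idl{J}$-positive sets. Each member of the tower will witness that one function $h$ is caught by a single $f\in\idl{E}$. Then we put $M(h)=\{f\}$ and check the definition of \FSP $\idl{J}$-MED directly. Alternatively, the tower produces a point of $\bigcap_h O_h(\idl{E})$, and Theorem \ref{thm:stone_hierarchy}(4) applies. The direct check seems cleanest.

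Let $\kappa=|\prodspc{F}|$ and enumerate $\prodspc{F}=\{h_\alpha:\alpha<\kappa\}$. We recursively choose $C_\alpha\in\idl{J}^+$ and $f_\alpha\in\idl{E}$ such that:
\begin{itemize}
\item[(a)] $C_\alpha\subseteq\Eq{D}(h_\alpha,f_\alpha)$;
\item[(b)] $C_\alpha\subseteq_{\idl{J}}C_\beta$ for all $\beta<\alpha$.
\end{itemize}

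At stage $\alpha$, the family $\{C_\beta:\beta<\alpha\}$ has $\idl{J}^+$-FIP. Indeed, for $\beta_0<\cdots<\beta_{k-1}<\alpha$, the intersection $\bigcap_{i<k}C_{\beta_i}$ contains $C_{\beta_{k-1}}\setminus\bigcup_{i<k-1}(C_{\beta_{k-1}}\setminus C_{\beta_i})$. This is a positive set minus a member of $\idl{J}$, hence positive.

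Since $|\alpha|<\kappa\leq\mdl{p}_{\idl{J}}$, this family has a $\idl{J}^+$-pseudointersection $C'$ by Definition \ref{def:p_j}. For $\alpha=0$ we take $C'=D$, and for successor $\alpha$ we may simply take $C'=C_{\alpha-1}$.

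Now $C'\in\idl{J}^+$, so VD maximality applied to $h_\alpha\uphar C'$ gives $f_\alpha\in\idl{E}$ with $\Eq{C'}(h_\alpha,f_\alpha)\in\idl{J}^+$. We set $C_\alpha:=C'\cap\Eq{D}(h_\alpha,f_\alpha)$. Then (a) is immediate, and (b) follows from $C_\alpha\subseteq C'\subseteq_{\idl{J}}C_\beta$.

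Finally, define $M(h_\alpha)=\{f_\alpha\}$. Given $h_{\alpha_0},\ldots,h_{\alpha_{k-1}}$ with $\alpha_{k-1}$ the largest index, the same FIP computation shows that
\[
\bigcap_{i<k}\Eq{D}(h_{\alpha_i},f_{\alpha_i})\supseteq\bigcap_{i<k}C_{\alpha_i}\in\idl{J}^+.
\]
Hence $\idl{E}$ is \FSP $\idl{J}$-MED. Repetitions of the same $h$ in the list cause no issue, because each $h$ receives exactly one index.

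The main obstacle is the limit stages of the recursion. There we need a positive pseudointersection of fewer than $\mdl{p}_{\idl{J}}$ sets, which is why the hypothesis requires the strict bound $|\alpha|<\kappa\le\mdl{p}_{\idl{J}}$ at every stage $\alpha<\kappa$. We also need the tower to retain $\idl{J}^+$-FIP, which is exactly what condition (b) guarantees.
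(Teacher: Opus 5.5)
Your proof is correct and follows the paper's argument: you build the same $\subseteq_{\idl{J}}$-decreasing tower by taking a $\idl{J}^+$-pseudointersection of fewer than $\mdl{p}_{\idl{J}}$ sets and then applying Van Douwen maximality on it (your $C'$ and $C_\alpha$ are the paper's $C_\alpha$ and $B_\alpha$). The only difference is at the end, where the paper extends $\idl{J}^*\cup\{B_\alpha:\alpha<\kappa\}$ to an ultrafilter and invokes Lemma~\ref{lem:fsp_global_ultrafilter}, whereas you check the definition directly with $M(h_\alpha)=\{f_\alpha\}$, which is exactly what the proof of that lemma does anyway.
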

\begin{proof}
	Let $ \kappa=|\prodspc{F}| $, and enumerate $ \prodspc{F}=\{h_\alpha:\alpha<\kappa\} $. We recursively define $ C_\alpha,B_\alpha\in\idl{J}^+ $ and $ f_\alpha\in\idl{E} $ for every $ \alpha<\kappa $ such that
	\begin{align}
		B_\alpha= \ &C_\alpha\cap\Eq{D}(h_\alpha,f_\alpha),\label{eq:pj_vd_exact_catch}\\
		\forall\beta<\alpha,\ &C_\alpha\subseteq_{\idl{J}}B_\beta,\label{eq:pj_vd_pseudointersection}\\
		\forall\beta<\alpha,\ &B_\alpha\subseteq_{\idl{J}}B_\beta.\label{eq:pj_vd_decreasing}
	\end{align}
	
	First let $ C_0=D $. By \VD maximality applied to $ h_0\uphar C_0=h_0 $, choose $ f_0\in\idl{E} $ such that
	\[
	B_0:=C_0\cap\Eq{D}(h_0,f_0)=\Eq{D}(h_0,f_0)\in\idl{J}^+.
	\]
	Thus the requirements hold at $\alpha =  0 $.
	
	Now let $ 0<\alpha<\kappa $, and suppose $ C_\beta,B_\beta\in\idl{J}^+ $ and $ f_\beta\in\idl{E} $ have been defined for every $ \beta<\alpha $ such that \eqref{eq:pj_vd_exact_catch}--\eqref{eq:pj_vd_decreasing} hold also for any $ \beta<\alpha $.
	
	We first show that $ \{B_\beta:\beta<\alpha\} $ has $ \idl{J}^+ $-FIP. Let $ \beta_0<\cdots<\beta_{k-1}<\alpha $. By \eqref{eq:pj_vd_decreasing}, for every $ i<k-1 $, $ B_{\beta_{k-1}}\setminus B_{\beta_i}\in\idl{J} $. So
	\[
	B_{\beta_{k-1}}\setminus\bigcap_{i<k}B_{\beta_i}  = \bigcup_{i<k-1}(B_{\beta_{k-1}}\setminus B_{\beta_i})\in\idl{J}.
	\]
	Since $ B_{\beta_{k-1}}\in\idl{J}^+ $, we have that $ \bigcap_{i<k}B_{\beta_i}\in\idl{J}^+ $, and then $ \{B_\beta:\beta<\alpha\} $ has $ \idl{J}^+ $-FIP.
	
	Also, $|\{B_\beta:\beta<\alpha\}|\leq|\alpha|<\kappa\leq\mdl{p}_{\idl{J}} $, so by the definition of $ \mdl{p}_{\idl{J}} $, the family $ \{B_\beta:\beta<\alpha\} $ has a $ \idl{J}^+ $-pseudointersection $ C_\alpha\in\idl{J}^+ $. Thus $ \forall\beta<\alpha,\ C_\alpha\subseteq_{\idl{J}}B_\beta$,  this way \eqref{eq:pj_vd_pseudointersection} holds.
	
	Apply \VD maximality to $ h_\alpha\uphar C_\alpha $. Choose $ f_\alpha\in\idl{E} $ such that
	\[
	B_\alpha:=C_\alpha\cap\Eq{D}(h_\alpha,f_\alpha)\in\idl{J}^+.
	\]
	This gives \eqref{eq:pj_vd_exact_catch}. This way for every $ \beta<\alpha $ we have $ B_\alpha\subseteq_{\idl{J}}B_\beta $, so \eqref{eq:pj_vd_decreasing} also holds. This completes the recursion.
	
	We claim that $ \idl{J}^*\cup\{B_\alpha:\alpha<\kappa\} $ has the finite intersection property. Let $ \alpha_0<\cdots<\alpha_{k-1}<\kappa $, and $ J_0,\ldots,J_{l-1}\in\idl{J} $. By the same argument above, $ \bigcap_{i<k}B_{\alpha_i}\in\idl{J}^+ $, so
	\[
	\left(\bigcap_{i<k}B_{\alpha_i}\right)\cap\bigcap_{j<l}(D\setminus J_j)
	=
	\left(\bigcap_{i<k}B_{\alpha_i}\right)\setminus\bigcup_{j<l}J_j
	\in\idl{J}^+,
	\]
	which means $ \idl{J}^*\cup\{B_\alpha:\alpha<\kappa\} $ has finite intersection property. Extend the proper filter generated by this family to an ultrafilter $ \U $ on $ D $. Then $ \idl{J}^*\subseteq\U $ and $\forall \alpha <\kappa, \  B_\alpha\in\U $. By \eqref{eq:pj_vd_exact_catch}, we have $ \forall \alpha<\kappa,\ \Eq{D}(h_\alpha,f_\alpha)\in\U  $. Since $ \idl{J}\subseteq\U^* $, the family $ \idl{E} $ is also $ \U^* $-e.d.. We found a $ \U $ such that the family $ \idl{E} $ is $ \U^* $-MED. By Lemma \ref{lem:fsp_global_ultrafilter}, $ \idl{E} $ is \FSP $ \idl{J} $-MED.
\end{proof}

Especially, let $ \chaxiom $ denote the \emph{Continuum Hypothesis}, and $ \mathfrak{p} $ be the \emph{pseudointersection number}. we have the following.

\begin{cor}\label{cor:ch_vd_fsp}
	\phantom{a}
	
	\begin{itemize}[leftmargin=1.5cm, labelwidth=1cm, labelsep=0.2cm, align=right]
		\item[\textnormal{($ \chaxiom $)}] Suppose that $ \idl{J} $ is weakly $ P^+ $. Then every \VD $ \idl{J} $-MED family in $ \prodspc{F} $ is \FSP $ \idl{J} $-MED.
		\item[\textnormal{($ \mathfrak{p}=\mathfrak{c} $)}] For any $ F_\sigma $ ideal $ \idl{J} $, every \VD $ \idl{J} $-MED family in $ \prodspc{F} $ is \FSP $ \idl{J} $-MED.
	\end{itemize}

\end{cor}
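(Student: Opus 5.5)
Both items will follow from Theorem \ref{thm:pj_vd_fsp}, so in each case it suffices to check that $|\prodspc{F}|\leq\mdl{p}_{\idl{J}}$. Since $D=\dom(F)$ is countable and $F(n)\leq\omega$, we always have $|\prodspc{F}|\leq\omega^\omega=\mathfrak c$. It is therefore enough to show $\mdl{p}_{\idl{J}}\geq\mathfrak c$ under each hypothesis.

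\emph{Case of $\chaxiom$.} By Proposition \ref{prop:weak_p_plus_pj}, weak $P^+$ of $\idl{J}$ gives $\mdl{p}_{\idl{J}}>\omega$, i.e. $\mdl{p}_{\idl{J}}\geq\omega_1$. Under $\chaxiom$ this equals $\mathfrak c$, so $|\prodspc{F}|\leq\mathfrak c=\omega_1\leq\mdl{p}_{\idl{J}}$, and Theorem \ref{thm:pj_vd_fsp} applies. (If $\prodspc{F}$ happens to be finite or countable, the inequality is even easier.)

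\emph{Case of $\mathfrak p=\mathfrak c$ with $\idl{J}$ an $F_\sigma$ ideal.} Here I will show $\mathfrak p\leq\mdl{p}_{\idl{J}}$ for $F_\sigma$ ideals, which is the main step. Let $\idl{B}\subseteq\idl{J}^+$ have $\idl{J}^+$-FIP with $|\idl{B}|<\mathfrak p$. The plan is to use Mazur's theorem: $\idl{J}=\{A:\varphi(A)<\infty\}$ for some lower semicontinuous submeasure $\varphi$. Closing $\idl{B}$ under finite intersections keeps the size below $\mathfrak p$ and keeps all members in $\idl{J}^+$, so every member has infinite $\varphi$-mass. I then force with (or use $\mathrm{MA}_\sigma$-centered below $\mathfrak p$ via Bell's theorem for) the $\sigma$-centered poset of pairs $(s,B)$ with $s\in\finsub{D}$ and $B$ in the closed family, ordered Mathias-style, together with the dense sets ``$\varphi(s)>n$'' and ``$s\setminus \min$-part is contained in $B$''. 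Density of the first uses $\varphi(B\setminus k)=\infty$ for $B\in\idl{J}^+$ (lower semicontinuity plus $\mathrm{Fin}\subseteq\idl{J}$; if $\idl{J}$ does not contain $\mathrm{Fin}$, I restrict to the set where finite sets are in $\idl{J}$, or handle that degenerate case separately). Fewer than $\mathfrak p$ dense sets yield a generic $C=\bigcup s$ with $\varphi(C)=\infty$, so $C\in\idl{J}^+$, and $C\subseteq^* B$ for every $B\in\idl{B}$. Hence $C\subseteq_{\idl{J}}B$. Thus every such family has a $\idl{J}^+$-pseudointersection, giving $\mdl{p}_{\idl{J}}\geq\mathfrak p=\mathfrak c\geq|\prodspc{F}|$, and Theorem \ref{thm:pj_vd_fsp} finishes.

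The expected obstacle is this $F_\sigma$ step. If the forcing bookkeeping is awkward, I will first try citing \cite{filipow2022characterizing}, where $\mathfrak p\leq\mdl{p}_{\idl{J}}$ for $F_\sigma$ ideals is presumably recorded. The small edge case $\mathrm{Fin}\not\subseteq\idl{J}$ also needs a separate check.
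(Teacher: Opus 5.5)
Your argument follows the paper's proof: both items reduce to Theorem~\ref{thm:pj_vd_fsp} via $|\prodspc{F}|\leq\mathfrak c\leq\mdl{p}_{\idl{J}}$, and the $\chaxiom$ case is identical. In the $F_\sigma$ case the paper simply cites $\mathfrak p\leq\mdl{p}_{\idl{J}}$ from \cite{filipow2022characterizing}, and your Mazur-plus-Bell argument with the $\sigma$-centered Mathias-type poset is a correct proof of that inequality when $\mathrm{Fin}\subseteq\idl{J}$.

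For the edge case you flag, the $\mdl{p}_{\idl{J}}$ route really does fail: for example $\idl{J}=\{\emptyset\}$ is closed with $\mdl{p}_{\idl{J}}=\omega$. The conclusion is trivial there, though. If $\{n\}\in\idl{J}^+$, \VD maximality on $X=\{n\}$ shows that every $h$ agrees at $n$ with some member of the family, and the principal ultrafilter at $n$ contains $\idl{J}^*$, so Lemma~\ref{lem:fsp_global_ultrafilter} gives \FSP.
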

\begin{proof}
	\phantom{a}
	
	\begin{itemize}
		\item[(1)] By Proposition \ref{prop:weak_p_plus_pj}, we have $ \mdl{p}_{\idl{J}}>\omega $. If $ \mdl{p}_{\idl{J}}=\mathfrak{c}^+ $, then the hypothesis of Theorem \ref{thm:pj_vd_fsp} is automatic. Otherwise, $ \mdl{p}_{\idl{J}} $ is an uncountable cardinal, so $ \mdl{p}_{\idl{J}}\geq\omega_1=\mathfrak{c} $ by $ \chaxiom $. Since $ D $ is countable, we also have $ |\prodspc{F}|\leq|\omega^D|\leq\mathfrak{c}$. Thus $ |\prodspc{F}|\leq\mdl{p}_{\idl{J}} $, and Theorem \ref{thm:pj_vd_fsp} applies.
		\item[(2)] We know that for any $ F_\sigma $ ideal $ \idl{J} $, $ \mathfrak{p}\leq \mathfrak{p}_\idl{J} $ \cite{filipow2022characterizing}. So $ \mathfrak{p}=\mathfrak{c}  $ implies $  |\prodspc{F}|\leq\mathfrak{p}_\idl{J} $, and Theorem \ref{thm:pj_vd_fsp} applies.
	\end{itemize}
\end{proof}

	\subsection{Fubini powers of Ramsey ultrafilters}

We first note that, for the dual ideal of an ultrafilter, ordinary idealized maximality and Van Douwen maximality coincide.

\begin{fact}\label{fact:ultrafilter_med_vd}
	Let $ F $ be a partial function, let $ D=\dom(F) $, and let $ \U $ be an ultrafilter on $ D $. A $ \U^* $-e.d. family $ \idl{E}\subseteq\prodspc{F} $ is $ \U^* $-MED if and only if it is \VD $ \U^* $-MED.
\end{fact}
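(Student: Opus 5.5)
One direction holds for any ideal. By Corollary \ref{cor:productive_hierarchy} (or directly from the definition, taking $X=D\in(\U^*)^+$), every \VD $\U^*$-MED family is $\U^*$-MED. Concretely, apply the defining condition of \VD maximality to $X=D$ and $h$ itself; since $\U^*\uphar D=\U^*$, this is exactly ordinary $\U^*$-maximality.

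For the converse, assume $\idl{E}$ is $\U^*$-MED. Fix $X\in(\U^*)^+$ and $p\in\prodspc{F\uphar X}$. Since $\U$ is an ultrafilter, $(\U^*)^+=\U$, so $X\in\U$. Extend $p$ to some $h_p\in\prodspc{F}$, for instance by choosing $h_p(n)=0$ for $n\in D\setminus X$; this is legal because $F(n)>0$. By $\U^*$-maximality there is $f\in\idl{E}$ with $\Eq{D}(h_p,f)\in\U$. Then
\[
\Eq{X}(p,f\uphar X)=X\cap\Eq{D}(h_p,f)\in\U=(\U^*)^+,
\]
because $\U$ is closed under finite intersections. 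Positivity relative to the restricted ideal $\U^*\uphar X$ is the same as not lying in $\U^*$, so this is exactly what is required.

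It remains to check that the restricted family $\{g\uphar X:g\in\idl{E}\}$ is $(\U^*\uphar X)$-e.d. This holds because for distinct $g,g'$ we have $\Eq{X}(g,g')\subseteq\Eq{D}(g,g')\in\U^*$. Restrictions might collide, but distinct elements of $\idl{E}$ agree on $X$ only if $X\in\U^*$, which is impossible. Hence $\idl{J}_0^{\U^*}(\idl{E})=\U^*$, and the converse is proved.

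Alternatively, one could use Theorem \ref{thm:stone_hierarchy}. Here $\beta_{\U^*}D=\{\U\}$ is a single point, so a nonempty $O_h(\idl{E})$ is automatically dense; parts (1) and (5) of that theorem then give the equivalence at once. There is no real obstacle in either route. The only point needing care is the identity $(\U^*)^+=\U$, which is what lets positivity survive intersection with $X$.
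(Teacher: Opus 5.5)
Your proof is correct and is the paper's argument: extend $h$ to all of $D$, use $\U^*$-maximality to get $\Eq{D}(\widehat{h},f)\in\U$, intersect with $X\in\U=(\U^*)^+$, and obtain the other direction by taking $X=D$. Your extra check that the restricted family stays $(\U^*\uphar X)$-e.d.\ and your one-point Stone-space remark ($\beta_{\U^*}D=\{\U\}$) are both valid additions. One small notational point: the ideal in the definition is $\idl{I}_0^{\U^*}(\idl{E})$, not $\idl{J}_0^{\U^*}(\idl{E})$.
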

\begin{proof}
	Suppose $ \idl{E} $ is $ \U^* $-MED. Let $ X\in\U $ and let $ h\in\prodspc{F\uphar X} $. Extend $ h $ to some $ \widehat{h}\in\prodspc{F} $, and choose $ f\in\idl{E} $ such that $ \Eq{D}\left(\widehat{h},f\right)\in\U $. Then $ \Eq{X}(h,f)=X\cap\Eq{D}\left( \widehat{h},f\right)\in\U $. Thus $ \idl{E} $ is \VD $ \U^* $-MED. The converse follows by taking $ X=D $.
\end{proof}

\begin{defn}\label{def:tilde_fubini_basis}
	For $ H\in\infsub{\omega} $ and $ \xi<\omega_1 $, define $ \wbar{H}^{\xi}\subseteq S_\xi $ recursively by $ \wbar{H}^{0}=\{\emptyset\} $ and for $ \xi\geq 1 $,
	\[
	\wbar{H}^{\xi}=\{n^\frown s:n\in H\text{ and }s\in\wbar{H\setminus(n+1)}^{\gamma_n^\xi}\},
	\]
	where $ H\setminus(n+1)=\{m\in H:m>n\} $.
\end{defn}

By the definition, we have that $ \dom(\wbar{H}^\xi ) \subseteq H $ and $ \forall n \in H $, $ \wbar{H}^\xi (n) = \wbar{H\setminus(n+1)}^{\gamma_n^\xi} $.

\begin{fact}\label{fact:tilde_elementary}
	Let $ \xi<\omega_1 $ and $ H,K\in\infsub{\omega} $.
	\begin{enumerate}[label=(\arabic*),font=\upshape]
		\item A sequence $ s\in S_\xi $ belongs to $ \wbar{H}^{\xi} $ if and only if $ s $ is strictly increasing and $ \ran(s)\subseteq H $. So for any $ n\in H $ we have $ \wbar{H\setminus(n+1)}^{\xi}\neq\emptyset $.
		\item If $ \xi\geq 1 $, then $ \dom(\wbar{H}^{\xi})=H $.
		\item If $ K\subseteq H $, then $ \wbar{K}^{\xi}\subseteq\wbar{H}^{\xi} $.
		\item The map $ H\mapsto\wbar{H}^{\xi} $ from $ \infsub{\omega} $ to $ \prst(S_\xi) $ is continuous.
	\end{enumerate}
\end{fact}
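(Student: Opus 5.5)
All four parts will be proved by transfinite induction on $\xi$, unfolding Definition~\ref{def:tilde_fubini_basis} one level at a time and using the identity $\wbar{H}^{\xi}(n)=\wbar{H\setminus(n+1)}^{\gamma_n^\xi}$ for $n\in H$ noted right after the definition. Since every $\gamma_n^\xi<\xi$, the induction hypothesis applies to all the sections. The base case $\xi=0$ is immediate in each item: $\wbar{H}^0=\{\emptyset\}=S_0$, and the empty sequence is vacuously strictly increasing with range in $H$.

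For (1), take $s\in S_\xi$ with $\xi\geq1$. Then $s=n^\frown t$ with $t\in S_{\gamma_n^\xi}$. By definition, $s\in\wbar{H}^{\xi}$ if and only if $n\in H$ and $t\in\wbar{H\setminus(n+1)}^{\gamma_n^\xi}$. By the induction hypothesis, the latter holds if and only if $t$ is strictly increasing with range in $H\setminus(n+1)$. That is exactly the condition that $n^\frown t$ is strictly increasing with range in $H$.

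The nonemptiness clause needs a witness. It suffices to show that $\wbar{K}^{\xi}\neq\emptyset$ for every infinite $K$, applied to $K=H\setminus(n+1)$, which is infinite. For this, I will build a strictly increasing element of $S_\xi$ with range in $K$ by recursion along the tree of $S_\xi$. Let $m_0=\min K$. Then pick an element of $S_{\gamma_{m_0}^\xi}$ using members of $K$ above $m_0$; this exists by induction, since $\gamma_{m_0}^\xi<\xi$. Prepend $m_0$ and apply the characterization just proved. I expect this nonemptiness witness to be the only slightly delicate step, because one has to check that the recursion terminates. It does, because membership in $S_\xi$ reduces the ordinal index at each step and there is no infinite descending sequence of ordinals. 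In practice this is absorbed into the induction on $\xi$ and needs no separate argument.

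For (2), if $\xi\geq1$ and $n\in H$, then $\wbar{H}^\xi(n)=\wbar{H\setminus(n+1)}^{\gamma_n^\xi}\neq\emptyset$ by (1), so $n\in\dom(\wbar{H}^\xi)$. Conversely, $\dom(\wbar{H}^\xi)\subseteq H$ by definition. For (3), inclusion follows directly from the characterization in (1): a strictly increasing sequence with range in $K$ also has range in $H$.

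For (4), it is enough to show that for each fixed $s\in S_\xi$ the set $\{H: s\in\wbar{H}^{\xi}\}$ is clopen. By (1) it equals $\{H:\ran(s)\subseteq H\}$ if $s$ is strictly increasing, and $\emptyset$ otherwise. Since $\ran(s)$ is finite, this set is a basic clopen set in $\infsub{\omega}$. Hence each coordinate of the map $H\mapsto\wbar{H}^{\xi}$ is continuous, and so the map is continuous into the product topology on $\prst(S_\xi)$.
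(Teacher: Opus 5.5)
Your proof is correct and follows the paper's argument. Like the paper, you prove (1) by induction on $\xi$, unfolding the definition one level at a time, obtain (2) and (3) from (1), and get (4) because membership of a fixed $s\in S_\xi$ depends only on the finitely many statements $n\in H$ for $n\in\ran(s)$. You also supply an explicit inductive witness for $\wbar{K}^{\xi}\neq\emptyset$: prepend $m=\min K$ to an element of $\wbar{K\setminus(m+1)}^{\gamma_m^\xi}$. This fills in a step the paper only asserts with the word ``So''.
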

\begin{proof}
	We prove the assertions by induction on $ \xi $. The case $ \xi=0 $ for (1), (3), (4) and the case $ \xi = 1 $ for (2) are immediate.
	\begin{enumerate}[label=(\arabic*),font=\upshape]
		\item Suppose $ \xi\geq1 $ and write $ s=n^\frown t\in S_\xi $. By Definition \ref{def:tilde_fubini_basis}, $ s\in\wbar{H}^{\xi} $ if and only if  $ n\in H\ \wedge\ t\in\wbar{H\setminus(n+1)}^{\gamma_n^\xi} $. By the induction hypothesis, this is equivalent to $ n\in H \ \wedge \ \ran(t) \subseteq H\setminus (n+1) \ \wedge  \ t$ is strictly increasing, which is further equivalent to $ s $ is strictly increasing $\ \wedge \ \ran(s)\subseteq H $.
		
		\item By Fact \ref{fact:tilde_elementary}(1).
		\item By Fact \ref{fact:tilde_elementary}(1).
		\item The membership of a fixed $ s\in S_\xi $ in $ \wbar{H}^{\xi} $ depends only on the finitely many statements $ n\in H $ for $ n\in\ran(s) $.
	\end{enumerate}
\end{proof}

\begin{prop}\label{prop:tilde_basis}
	Let $ \U $ be a Ramsey ultrafilter and let $ \xi<\omega_1 $.
	\begin{enumerate}[label=(\arabic*),font=\upshape]
		\item If $ H\in\U $, then $ \wbar{H}^{\xi}\in\U^\xi $.
		\item If $ X\in\U^\xi $, then there is $ H\in\U $ such that $ \wbar{H}^{\xi}\subseteq X $.
	\end{enumerate}
\end{prop}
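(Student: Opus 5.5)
Both parts go by induction on $\xi$, using the recursive description $\wbar{H}^{\xi}(n)=\wbar{H\setminus(n+1)}^{\gamma_n^\xi}$ for $n\in H$ and $\wbar{H}^\xi(n)=\emptyset$ otherwise. The case $\xi=0$ is trivial, since $\wbar{H}^0=\{\emptyset\}=S_0\in\U^0$. Part (1) needs only that $\U$ is non-principal. Part (2) is where the Ramsey property enters, through the diagonalization clause of selectivity.

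For (1), let $\xi\geq1$ and $H\in\U$. For $n\in H$, the set $H\setminus(n+1)$ is cofinite in $H$, hence in $\U$. By the induction hypothesis applied at $\gamma_n^\xi<\xi$, we get $\wbar{H}^\xi(n)\in\U^{\gamma_n^\xi}$. Thus $\{n:\wbar{H}^\xi(n)\in\U^{\gamma_n^\xi}\}\supseteq H\in\U$, and so $\wbar{H}^\xi\in\U^\xi$ by definition.

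For (2), let $\xi\geq1$ and $X\in\U^\xi$, and put $A=\{n:X(n)\in\U^{\gamma_n^\xi}\}\in\U$. For each $n\in A$, the induction hypothesis gives $H_n\in\U$ with $\wbar{H_n}^{\gamma_n^\xi}\subseteq X(n)$; for $n\notin A$ set $H_n=\omega$. I want $H\subseteq A$ in $\U$ such that $H\setminus(n+1)\subseteq H_n$ for every $n\in H$. Then Fact~\ref{fact:tilde_elementary}(3) gives $\wbar{H}^\xi(n)=\wbar{H\setminus(n+1)}^{\gamma_n^\xi}\subseteq\wbar{H_n}^{\gamma_n^\xi}\subseteq X(n)$ for $n\in H$, while $\wbar{H}^\xi(n)=\emptyset$ for $n\notin H$, so $\wbar{H}^\xi\subseteq X$. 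To find such an $H$, take the decreasing sequence $G_k=A\cap\bigcap_{n\leq k}H_n\in\U$ and apply selectivity to obtain $k_0<k_1<\cdots$ with $K=\{k_i\}\in\U$ and $k_{i+1}\in G_{k_i}$. This only gives $k_{i+1}\in H_{k_j}$ for $j\leq i$ and $k_{i+1}\in A$; it does not give $k_{i+2}\in H_{k_i}$ directly, and that is where the work lies.

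\textbf{Main obstacle: getting the full diagonal intersection.} I would use the standard fact that a Ramsey ultrafilter is closed under diagonal intersections: for any $\la G_k\ra$ in $\U$, the set $\Delta G_k=\{m:\forall k<m,\ m\in G_k\}$ is in $\U$. To prove this from Theorem~\ref{thm:ramsey_ultrafilter}(1), define $c(\{k,m\})=0$ for $k<m$ iff $m\in G_k$. A homogeneous $Y\in\U$ cannot have color $1$: fix $k\in Y$, note that $G_k\cap Y\in\U$ contains some $m>k$, and then $c(\{k,m\})=0$. So $c$ is $0$ on $[Y]^2$. Now take $H=Y\cap A\cap\Delta_k H_k$, using the decreasing $G_k$ above (or simply applying the diagonal-intersection fact to $H_k$ directly, which avoids selectivity). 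For $n<m$ both in $H$, we have $m\in H_n$ since $m\in\Delta_k H_k$ and $n<m$. Hence $H\setminus(n+1)\subseteq H_n$ for each $n\in H$, and $H\subseteq A$, which completes the induction.
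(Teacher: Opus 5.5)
Your part (1), and your reduction of part (2) to finding $H\in\U$ with $H\subseteq A$ and $H\setminus(n+1)\subseteq H_n$ for every $n\in H$, match the paper's argument. The gap is in the step that produces $H$.

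The ``standard fact'' you invoke is false. No non-principal ultrafilter on $\omega$ is closed under the diagonal intersection $\Delta_k G_k=\{m:\forall k<m,\ m\in G_k\}$. Take $G_k=\omega\setminus\{k+1\}$. Every $G_k$ is cofinite and hence in $\U$, but for $m\geq1$ the condition $m\in G_{m-1}$ fails, so $\Delta_k G_k=\{0\}$.

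Your coloring argument does not prove that fact. What it proves is the relativized statement: $m\in G_k$ whenever $k<m$ both lie in the homogeneous set $Y$. So your final choice $H=Y\cap A\cap\Delta_k H_k$ is not shown to be in $\U$, and in general it is not. Your remark that applying the fact to the $H_k$ directly ``avoids selectivity'' is also wrong. Diagonalizing along a set in $\U$ is exactly where the Ramsey property is needed.

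The repair is to delete $\Delta_k H_k$. Let $Y\in\U$ be homogeneous for $c(\{k,m\})=0\iff m\in H_k$; your argument shows its color must be $0$. Then $H=Y\cap A$ already satisfies $H\setminus(n+1)\subseteq H_n$ for all $n\in H$. This is precisely the diagonalization claim the paper proves inside its proof, using the same coloring.

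The obstacle you saw with selectivity is also not real. The $G_k$ are decreasing, and $k_j\leq k_{l-1}$ whenever $j<l$. Hence $k_l\in G_{k_{l-1}}\subseteq G_{k_j}\subseteq A\cap H_{k_j}$, so $H=K\setminus\{k_0\}$ works directly from the selectivity definition.
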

\begin{proof}
	We first relativize the diagonalization property which will be used in the induction.
	
	\begin{claim}\label{claim:ramsey_diagonalization}
		If $ M\in\U $ and $ H_n\in\U $ for every $ n\in\omega $, then there is $ H\in\U $ such that $ H\subseteq M $ and $ H\setminus(n+1)\subseteq H_n $ for every $ n\in H $.
	\end{claim}
	\begin{claimproof}{\ref{claim:ramsey_diagonalization}}
		Define $ c:[\omega]^2\to2 $ by $ c(\{n,m\})=0 $ for $ n<m $ if and only if $ m\in H_n $. Get a homogeneous $ H_0\in\U $, and let $ H=H_0\cap M $. Claim that $ H $ is  0-homogeneous. This is because for any $ n\in H $, as $ H\cap H_n\in\U $, there is $ m\in H\cap H_n $ with $ m>n $ i.e. $ c(\{n,m\})=0 $, so $ H $ can not be 1-homogeneous and thus is 0-homogeneous. This means for any $ n\in H $, $ H\setminus(n+1)\subseteq H_n $.
	\end{claimproof}
	
	We prove by induction on $ \xi $ simultaneously for (1) and (2). The case $ \xi=0 $ is trivial. Suppose $ \xi\geq 1 $ and both assertions hold below $ \xi $. 
	\begin{enumerate}[label=(\arabic*),font=\upshape]
		\item For any $ H\in\U $, any every $ n\in H $, we have $ H\setminus(n+1)\in\U $, so by the induction hypothesis we have $ \wbar{H\setminus(n+1)}^{\gamma_n^\xi}\in\U^{\gamma_n^\xi}$. So $ \U\ni H\subseteq \{n\in\omega:\wbar{H}^{\xi}(n)\in\U^{\gamma_n^\xi}\}$, which means $ \wbar{H}^{\xi}\in\U^\xi $.
		
		\item Let $ X\in\U^\xi $ and let $ M=\{n\in\omega:X(n)\in\U^{\gamma_n^\xi}\}\in\U $. For every $ n\in M $, by induction hypothesis get an $ H_n\in\U $ such that $ \wbar{H_n}^{\gamma_n^\xi}\subseteq X(n) $. For $ n\notin M $, let $ H_n=\omega $. By Claim \ref{claim:ramsey_diagonalization}, there is $ H\in\U $ such that $ H\subseteq M $ and $\forall n\in H, \  H\setminus(n+1)\subseteq H_n $. So if $ n\in H $, then Fact \ref{fact:tilde_elementary} gives
		\[
		\wbar{H}^{\xi}(n)=\wbar{H\setminus(n+1)}^{\gamma_n^\xi}\subseteq\wbar{H_n}^{\gamma_n^\xi}\subseteq X(n).
		\]
		Also, if $ n\notin H $, as $ \wbar{H}^{\xi}(n) =\emptyset$, we have that $ \wbar{H}^{\xi}\subseteq X $.
	\end{enumerate}
\end{proof}

\begin{thm}\label{thm:no_analytic_ultrafilter_fubini_mad}
	Let $ \U $ be a Ramsey ultrafilter on $ \omega $, and let $ 1\leq\alpha<\omega_1 $. Then there is no infinite analytic $ (\U^\alpha)^*\times\emptyset $-MAD family on $ S_\alpha\times\omega $.
\end{thm}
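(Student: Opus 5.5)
The plan is to adapt the proof of Theorem~\ref{thm:no_analytic_fsp_fin}, not the trace argument of Theorem~\ref{thm:jtimesemptyset_to_j}. The trace argument is useless here: $(\U^\alpha)^*$ is a maximal ideal (Fact~\ref{fact:fubini_power_ultrafilter}), so a $(\U^\alpha)^*$-a.d.\ family has at most one member. We combine the Ramsey property of $\U$ (Theorem~\ref{thm:ramsey_ultrafilter}(2), complete $\U$-Ramseyness of analytic sets) with the basis $\wbar{H}^\alpha$ from Proposition~\ref{prop:tilde_basis}.

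Suppose $\idl{A}$ is an infinite analytic $(\U^\alpha)^*\times\emptyset$-MAD family, and fix distinct $A_n\in\idl{A}$. Positivity means $\dom(A_n)\in\U^\alpha$. Let $g_n(s)=\min A_n(s)$ for $s\in\dom(A_n)$. By Proposition~\ref{prop:tilde_basis}(2) and Claim~\ref{claim:ramsey_diagonalization}, we may assume after shrinking that each $A_n$ has domain containing $\wbar{\omega\setminus(n+1)}^\alpha$ on the relevant part, or at least work only on sequences drawn from a fixed $H_0\in\U$ adapted to all $A_n$.

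For $x=\{x_0<x_1<\cdots\}\in\ramseysp$, put $p_i=x(2i)$ and $r_i=x(2i+1)$, and define
\[
\Phi(x)=\bigcup_{i}\{(s,g_{p_i}(s)):s\in\wbar{x}^\alpha,\ s(0)=r_i\}.
\]
This is a continuous map by Fact~\ref{fact:tilde_elementary}(4). Set
\[
\idl{P}=\{x:\exists A\in\idl{A}\ \Phi(x)\subseteq A\},
\]
which is analytic. By complete $\U$-Ramseyness, fix $H\in\U$, $H\subseteq H_0$, with $[H]^\omega\subseteq\idl{P}$ or $[H]^\omega\cap\idl{P}=\emptyset$. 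It then suffices to exhibit one $y\in[H]^\omega\cap\idl{P}$ and one $z\in[H]^\omega\setminus\idl{P}$.

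\emph{Hiding.} As in Claim~\ref{claim:fsp_direct_hiding}, let $\ell(m)$ be the largest element of $H$ below $m$ (for $m\in H$ not minimal). Define $h$ on $\wbar{H}^\alpha$ by $h(s)=g_{\ell(s(0))}(s)$. By Proposition~\ref{prop:tilde_basis}(1), $\gr(h)$ is positive. Maximality gives $A\in\idl{A}$ with $\dom(\gr(h)\cap A)\in\U^\alpha$. Proposition~\ref{prop:tilde_basis}(2) then gives $H'\in\U$ with $\wbar{H'}^\alpha\subseteq\dom(\gr(h)\cap A)$. Choose $m_0<m_1<\cdots$ in $H\cap H'$ such that $\ell(m_{k+1})>m_k$, and set $y=\{\ell(m_0)<m_0<\ell(m_1)<m_1<\cdots\}$. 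A sequence $s\in\wbar{y}^\alpha$ with $s(0)=m_k$ has its tail in $y$ above $m_k$. To guarantee $s\in\wbar{H'}^\alpha$, I would choose the $m_k$ so that the $\ell(m_k)$ also lie in $H'$, for instance by taking $H'\subseteq H$ and defining $\ell$ relative to $H'$. Then $\Phi(y)\subseteq A$, so $y\in\idl{P}$.

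\emph{Escape.} This step is the main obstacle. Pick $a\in H$ with $A_a\neq A$. Then $\dom(A_a\cap A)\notin\U^\alpha$, so $\{b:\dom(A_a\cap A)(b)\in\U^{\gamma_b^\alpha}\}\notin\U$. Pick $b>a$ in $H$ outside this set with $\dom(A_a)(b)\in\U^{\gamma^\alpha_b}$. Using Proposition~\ref{prop:tilde_basis}(2) again, find $K\in\U$ such that $\wbar{K}^{\gamma_b^\alpha}$ lies inside the section of $\dom(A_a)\setminus\dom(A_a\cap A)$ at $b$. Then let $z=\{a<b\}\cup(\text{a tail of }y\text{ chosen inside }K)$. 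This requires running the hiding construction inside $H\cap K$, or redoing it after fixing $a,b$; I expect the bookkeeping of this order to be the delicate point, and I would first try fixing $a,b,K$ in advance and building $y$ within $K$. If $\Phi(z)\subseteq g$-graph of some $B\in\idl{A}$, then $B$ shares infinitely many positive sections with $A$ through the tail, so $B=A$. But then a point $(b^\frown t,g_a(b^\frown t))\in\Phi(z)$ with $t\in\wbar{K}$ lies outside $A$, a contradiction. Hence $z\notin\idl{P}$, contradicting homogeneity.

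The remaining care is in making ``infinitely many shared positive sections force $B=A$'' precise. The tail part $\Phi(y\text{-tail})$ has domain whose $\U^\alpha$-measure is not full; it only covers sections indexed by $\{m_k\}$. So I would instead argue that $\Phi(z)\cap\Phi(y)$ has positive domain in the restricted sense used by the MAD condition. Alternatively, I would define $\idl{P}$ by ``$\Phi(x)\subseteq A$ for some $A\in\idl{A}$'' and note that $A$ is determined by the tail. This step is where I would expect to adjust definitions, possibly by letting $\Phi$ use all $s\in\wbar{x}^\alpha$ starting at any $r_i$ rather than only those beginning exactly at $r_i$.
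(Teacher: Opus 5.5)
There is a genuine gap, and it comes from your definition of $\Phi$. You let $\dom\Phi(x)$ be all $s\in\wbar{x}^\alpha$ with $s(0)=r_i$. So for $\alpha\geq 2$ the label points $p_j$ appear as non-initial coordinates of sequences in $\dom\Phi(x)$. Once the $r$'s form a set in $\U$, the labels do not, so the sequences in $\dom\Phi(x)$ passing through a label form a $(\U^\alpha)^*$-null set. On that set, maximality of $\idl{A}$ tells you nothing about $A$.

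Your hiding step needs exactly those cells. Agreement of $h$ with $A$ is known only on $\wbar{H'}^\alpha$, and nothing forces $\ell(m_k)\in H'$. For instance, if $H'$ lies inside the even-indexed elements of $H$ (a set that may well be in $\U$), no element of $H'$ has its $H$-predecessor in $H'$. Your proposed remedy, defining $\ell$ relative to $H'$, is circular: $H'$ is obtained from $A$, $A$ from $h$, and $h$ from $\ell$.

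The escape step has a second problem, which you noticed. To conclude $B=A$ from $\Phi(z)\subseteq B$, you need $\dom(\Phi(z)\cap A)\in\U^\alpha$, because $(\U^\alpha)^*\times\emptyset$-positivity requires the set of first coordinates to be in $\U$. Agreement on infinitely many positive sections is not enough. Your sparse recursive choice of the $m_k$ is borrowed from the $P$-point proof, where only $q$-rare sets are homogeneous. It does not give $\{m_k\}\in\U$, and it is not needed here, since Theorem~\ref{thm:ramsey_ultrafilter}(2) makes all of $[W]^\omega$ homogeneous. Finally, $a$, $b$, $K$ depend on $A$, so they cannot be fixed before the hiding step.

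The missing idea is to use the even-indexed points only as labels, never as coordinates. Put $Q_x=\{x(2i+1):i\in\omega\}$, $\ell_x(x(2i+1))=x(2i)$, and $\Phi(x)=\{(s,g_{\ell_x(s(0))}(s)):s\in\wbar{Q_x}^\alpha\}$, with $g_n$ extended by $0$ off $\dom(A_n)$. Deleting some pairs $(x(2i),x(2i+1))$ from $x$ to get $y$ then simply restricts $\Phi(x)$ to $\wbar{Q_y}^\alpha$.

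For hiding:
\begin{enumerate}
\item Take $x\subseteq W$ with $Q_x\in\U$ (drop $\min W$ if needed).
\item Pick $A\in\idl{A}$ with $X=\dom(\Phi(x)\cap A)\in\U^\alpha$.
\item Pick $H\in\U$ with $\wbar{H}^\alpha\subseteq X$ by Proposition~\ref{prop:tilde_basis}; automatically $H\subseteq Q_x$.
\item Keep exactly the pairs whose odd member lies in $H$.
\end{enumerate}
Then $Q_y=H\in\U$ and $\Phi(y)\subseteq A$.

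Only afterwards do the escape:
\begin{enumerate}
\item Choose $n\in W$ with $A_n\neq A$, and let $D_n=\dom(A_n)\setminus\dom(A\cap A_n)\in\U^\alpha$.
\item Take $H_1\in\U$ with $\wbar{H_1}^\alpha\subseteq D_n$, and $r\in W\cap H_1$ above $n$.
\item Let $z$ consist of $n<r$ followed by the pairs of $y$ whose odd member is in $H_1$ and whose even member exceeds $r$.
\end{enumerate}
Let $Q'$ be the set of retained odd members; then $Q'\in\U$. $\Phi(z)$ agrees with $A$ on $\wbar{Q'}^\alpha\in\U^\alpha$. At any $s\in\wbar{Q_z}^\alpha$ with $s(0)=r$, however, $\Phi(z)$ takes the value $\min A_n(s)\notin A(s)$. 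Almost disjointness then gives $z\notin\idl{P}$. This is the paper's argument.
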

\begin{proof}
	Suppose $ \idl{A}\subseteq\prst(S_\alpha\times\omega) $ is an infinite analytic $ (\U^\alpha)^*\times\emptyset $-MAD family. Choose pairwise distinct $ A_0,A_1,\cdots\in\idl{A} $. For every $ n\in\omega $, define $ g_n:S_\alpha\to\omega $ by
	\[
	g_n(s)=
	\begin{cases}
		\min A_n(s),&s\in\dom(A_n),\\
		0,&s\notin\dom(A_n).
	\end{cases}
	\]
	
	For any $ x\in\ramseysp $, define
	\[
	p_i^x=x(2i),\  q_i^x=x(2i+1),\ Q_x=\{q_i^x:i\in\omega\}\subseteq x.
	\]
	Also, for $ q_i^x\in Q_x $, let $ \ell_x(q_i^x)=p_i^x $, and define
	\begin{align*}
		\Phi(x):=&\{(s,g_{\ell_x(s(0))}(s)):s\in\wbar{Q_x}^{\alpha}\}\\
		=&\Bigg\{\Bigg( x'' \left(2\vec{i}+1 \right), g_{x(2i_0)}\bigg(x''\left(2\vec{i}+1 \right) \bigg) \Bigg):\\
		&\phantom{\Bigg\{\Bigg(}x''\left(2\vec{i}+1\right) \in S_\alpha \ \wedge \vec{i}\in\finsub{\omega}\Bigg\},\\
		\text{where } x''\left(2\vec{i}+1\right) =& \ x(2i_0+1)^\frown x(2i_1+1) ^\frown \cdots ^\frown x(2i_{|\vec{i}|-1}+1).
	\end{align*}

	By definition, the map $ \Phi:\ramseysp\to\prst(S_\alpha\times\omega) $ is continuous, and $ \dom(\Phi(x))=\wbar{Q_x}^{\alpha} $.
	 
	Define
	\[
	P=\{x\in\ramseysp:\exists A\in\idl{A}\ \Phi(x)\subseteq A\}.
	\]
	The set $ P $ is analytic.
	
	\begin{claim}\label{claim:ultrafilter_hiding}
		For every $ W\in \U $, there are $ y\in[W]^\omega $ and $ A\in\idl{A} $ such that $ Q_y\in\U $ and $ \Phi(y)\subseteq A $.
	\end{claim}
	\begin{claimproof}{\ref{claim:ultrafilter_hiding}}
		Enumerate $ W=\{w_0<w_1<\cdots\} $. Exactly one of $ \{w_{2i}:i\in\omega\} $ and $ \{w_{2i+1}:i\in\omega\} $ belongs to $ \U $. If the latter belongs to $ \U $, let $ x=W $; otherwise let $ x=W\setminus\{w_0\} $. Then $ Q_x\in\U $, so $ \dom(\Phi(x))=\wbar{Q_x}^{\alpha}\in\U^\alpha $ by Proposition \ref{prop:tilde_basis}.
		
		By maximality, choose $ A\in\idl{A} $ such that
		\[
		X:= \{s\in \wbar{Q_x}^{\alpha}: \Phi(x)(s)\in A(s)\} = \dom(\Phi(x)\cap A)\in\U^\alpha.
		\]
		By Proposition \ref{prop:tilde_basis}, choose $ H\in\U $ such that $ \wbar{H}^{\alpha}\subseteq X $. Since $ X\subseteq\wbar{Q_x}^{\alpha} $, Fact \ref{fact:tilde_elementary}(2) gives $ H\subseteq Q_x $.
		
		Suppose $ H = \{q^x_{i_0}< q^x_{i_1}<\cdots< q^x_{i_n}<\cdots\} $. Define
		\[
		y = \{p^x_{i_0} < q^x_{i_0}< p^x_{i_1}< q^x_{i_1}<\cdots< p^x_{i_n}< q^x_{i_n}<\cdots\} \in\infsub{x}\subseteq \infsub{W}.
		\]
		Then $ Q_y=H\in\U $, and for any $ s\in \wbar{Q_y}^\alpha $, we have that $ \ell_y(s(0)) = \ell_x(s(0)) $. As we also have $ \wbar{Q_y}^\alpha \subseteq X \subseteq \wbar{Q_x}^\alpha  $, we have that
		\[
		\forall s\in \wbar{Q_y}^\alpha , \ \Phi(y)(s) = g_{\ell_y(s(0))}(s) = g_{\ell_x(s(0))}(s) = \Phi(x)(s)\in A(s),
		\]
		so $ \Phi(y)\subseteq A $.
	\end{claimproof}
\begin{claim}\label{claim:ultrafilter_escape}
	For every $ W\in\U $, there are $ z\in[W]^\omega $ and $ A\in\idl{A} $ such that $ \dom(\Phi(z) \cap A)\in\U^\alpha $ and $ \Phi(z)\setminus A\neq\emptyset $.
\end{claim}
\begin{claimproof}{\ref{claim:ultrafilter_escape}}
	By Claim \ref{claim:ultrafilter_hiding}, choose
	\[
	x=\{p_0<q_0<p_1<q_1<\cdots\}\in[W]^\omega
	\]
	and $ A\in\idl{A} $ such that $ Q:=\{q_i:i\in\omega\}\in\U $ and $ \Phi(x)\subseteq A $.
	
	We choose an $ n\in W $ such that $ A_n \neq A $, so $ \dom(A\cap A_n)\in(\U^\alpha)^* $. Since $ \dom(A_n)\in\U^\alpha $ and $ \U^\alpha $ is an ultrafilter by Fact \ref{fact:fubini_power_ultrafilter}, we have $D_n:=\dom(A_n)\setminus\dom(A\cap A_n)\in \U^\alpha $. By Proposition \ref{prop:tilde_basis}, get $ H\in\U $ such that $ \wbar{H}^{\alpha}\subseteq D_n $. Choose $ r\in W\cap H $ with $ r>n $, and let $ Q' = \{q_i: q_i\in H \ \wedge p_i>r\} $. As $ Q\cap H \in\U$ and there are only finitely many $ q_i $'s with $ p_i\leq r $, we have that $ Q'\in\U $. Say $ Q' = \{ q_{i_0}<q_{i_1}<\cdots q_{i_n}<\cdots\} $.
	
	Define
	\[
	z=\{n<r<p_{i_0}<q_{i_0}<p_{i_1}<q_{i_1}<\cdots\}.
	\]
	Then $ z\in[W]^\omega $ and $ Q_z=\{r\}\cup Q'\subseteq H $.
	
	For every $ s\in\wbar{Q'}^{\alpha} $, again we have that $ \ell_z(s(0)) = \ell_x(s(0))  $, and that $ \Phi(z)(s) = \Phi(x)(s)\in A(s) $, as $ \Phi(x)\subseteq A $. We have $ \wbar{Q'}^{\alpha}\subseteq\dom(A\cap\Phi(z)) $. As $ Q'\in\U $, Proposition \ref{prop:tilde_basis} gives $ \wbar{Q'}^{\alpha}\in\U^\alpha $, which implies
	\[
	\dom(A\cap\Phi(z))\in\U^\alpha.
	\]
	
	For the other side, since $ r\in Q_z $, by Fact \ref{fact:tilde_elementary}(2) there is $ s\in\wbar{Q_z}^{\alpha} $ with $ s(0)=r $. Note that $ s\in\wbar{Q_z}^{\alpha}\subseteq\wbar{H}^{\alpha}\subseteq D_n $. This means $ s\in \dom(A_n)$ while $ A(s)\cap A_n(s)=\emptyset $. Also, we have that $ \Phi(z)(s)= g_n(s) $. By the definition of $ g_n $, $ \Phi(z)(s) = \min A_n(s) \notin A(s)$. This means $ \Phi(z)\setminus A\neq\emptyset $.
\end{claimproof}

By Theorem \ref{thm:ramsey_ultrafilter}, there is $ W\in\U $ such that either $ [W]^\omega\subseteq P $ or $ [W]^\omega\cap P=\emptyset $.
\begin{itemize}
	\item[(1)] Suppose $ [W]^\omega\cap P=\emptyset $. By Claim \ref{claim:ultrafilter_hiding}, there are $ y\in[W]^\omega $ and $ A\in\idl{A} $ such that $ \Phi(y)\subseteq A $. Thus $ y\in P $, a contradiction.
	
	\item[(2)] Suppose $ [W]^\omega\subseteq P $. By Claim \ref{claim:ultrafilter_escape}, there are $ z\in[W]^\omega $ and $ A\in\idl{A} $ such that $ \dom(\Phi(z) \cap A)\in\U^\alpha $ and $ \Phi(z)\setminus A\neq\emptyset  $. Since $ z\in P $, there is $ C\in\idl{A} $ such that $ \Phi(z)\subseteq C $. Then $\dom(A\cap C) \supseteq\dom(A\cap\Phi(z)) \in \U^\alpha$, which implies $ A=C $. Hence $ \Phi(z)\subseteq A $, contradicting $ \Phi(z)\setminus A\neq\emptyset $.
\end{itemize}
So both cases lead to a contradiction, which finishes the proof.
\end{proof}

\begin{cor}\label{cor:no_analytic_ramsey_ultrafilter_med}
	Let $ \U $ be a Ramsey ultrafilter on $ \omega $, and let $ 1\leq\alpha<\omega_1 $. Then there is no analytic $ (\U^\alpha)^* $-MED family in $ \omega^{S_\alpha} $.
\end{cor}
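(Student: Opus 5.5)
The plan is to reduce the statement to Theorem \ref{thm:no_analytic_ultrafilter_fubini_mad} through the graph map, following the pattern of Corollary \ref{cor:no_analytic_vd_standard_ideals}. We work with $F$ defined on $D=S_\alpha$ and constantly equal to $\omega$, so that $\prodspc{F}=\omega^{S_\alpha}$ and $\Sigma_F=S_\alpha\times\omega$. Write $\idl{J}=(\U^\alpha)^*$. By Fact \ref{fact:fubini_power_ultrafilter}, $\U^\alpha$ is an ultrafilter on $S_\alpha$, so $\idl{J}$ is the dual ideal of an ultrafilter and $\idl{J}^+=\U^\alpha$.

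Suppose, towards a contradiction, that $\idl{E}\subseteq\omega^{S_\alpha}$ is an analytic $\idl{J}$-MED family.

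First, $\idl{E}$ is infinite. Take any finite $\{f_0,\ldots,f_{k-1}\}\subseteq\idl{E}$. Define $h(s):=\max_{i<k}f_i(s)+1$. Then $\Eq{S_\alpha}(h,f_i)=\emptyset\in\idl{J}$ for every $i<k$, so no finite family can be maximal. This is the same remark already used in the proof of Theorem \ref{thm:no_analytic_fsp_fin}.

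Second, by Fact \ref{fact:ultrafilter_med_vd}, the family $\idl{E}$ is VD $\idl{J}$-MED. Proposition \ref{prop:vdj_jtimesempty} then says that $\gr''\idl{E}$ is a $(\U^\alpha)^*\times\emptyset$-MAD family on $\Sigma_F=S_\alpha\times\omega$.

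Third, the map $\gr$ is continuous and injective. Hence $\gr''\idl{E}$ is analytic, as a continuous image of an analytic set, and infinite, because $\idl{E}$ is infinite and $\gr$ is injective.

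This produces an infinite analytic $(\U^\alpha)^*\times\emptyset$-MAD family on $S_\alpha\times\omega$, which contradicts Theorem \ref{thm:no_analytic_ultrafilter_fubini_mad}.

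There is no real obstacle here, since all the substantive work is in Theorem \ref{thm:no_analytic_ultrafilter_fubini_mad}. The only points that need care are the finite case, handled by the diagonal function $h$ above, and checking that the convention $\Sigma_F=S_\alpha\times\omega$ for $F\equiv\omega$ matches the underlying set used in Theorem \ref{thm:no_analytic_ultrafilter_fubini_mad}.
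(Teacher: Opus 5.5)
Your proposal is correct and follows the paper's proof essentially step for step. You rule out finite families by pointwise avoidance, use Fact \ref{fact:ultrafilter_med_vd} to upgrade to Van Douwen maximality, then apply Proposition \ref{prop:vdj_jtimesempty} together with the continuity and injectivity of $\gr$ to obtain an infinite analytic $(\U^\alpha)^*\times\emptyset$-MAD family on $S_\alpha\times\omega$, which contradicts Theorem \ref{thm:no_analytic_ultrafilter_fubini_mad}.
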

\begin{proof}
	Suppose that $ \idl{E}\subseteq\omega^{S_\alpha} $ is analytic and $ (\U^\alpha)^* $-MED. The family $ \idl{E} $ is infinite, since any finite family can be avoided pointwise. By Fact \ref{fact:ultrafilter_med_vd}, the family $ \idl{E} $ is \VD $ (\U^\alpha)^* $-MED. By Proposition \ref{prop:vdj_jtimesempty}, the family $ \gr''\idl{E} $ is an infinite analytic $ (\U^\alpha)^*\times\emptyset $-MAD family on $ S_\alpha\times\omega $, contradicting Theorem \ref{thm:no_analytic_ultrafilter_fubini_mad}.
\end{proof}
	
	It's of interest to consider the existence of analytic $ \U^* $-MED families where $ \U $ is a P-point. Recall that an ideal $ \idl{J}\supseteq\mathrm{Fin} $ is a
	$ P $-ideal if, for every sequence
	$ \la A_n:n\in\omega\ra $ in $ \idl{J} $, there is some
	$ A\in\idl{J} $ such that $ \forall n\in\omega, \ A_n\subseteq^* A  $.
	
	\begin{prop}\label{prop:analytic_pideal_p_point_fsigma}
		Consider the following statements.
		\begin{enumerate}[label=(\arabic*),font=\upshape]
			\item There are an (analytic) $ P $-ideal $ \idl{J}\supseteq \mathrm{Fin} $ on
			$ \omega $ and an analytic \FSP $ \idl{J} $-MED family in
			$ \bairesp $.
			
			\item There are a $ P $-point $ \U $ on $ \omega $ and an
			analytic $ \U^* $-MED family in $ \bairesp $.
			
			\item There are an $ F_\sigma $ ideal $ \idl{I} $ on
			$ \omega $ and an analytic \FSP $ \idl{I} $-MED family in
			$ \bairesp $.
		\end{enumerate}
		Then (1) implies (2) and (2) implies (3).
	\end{prop}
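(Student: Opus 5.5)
The plan for (1)$\Rightarrow$(2) is to combine Lemma \ref{lem:fsp_global_ultrafilter} with the proof of Claim \ref{claim:u_p_point}, using the $P$-ideal property in place of $\mathrm{Fin}$.

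Let $\idl{J}\supseteq\mathrm{Fin}$ be a $P$-ideal and $\idl{E}$ an analytic \FSP $\idl{J}$-MED family. By Lemma \ref{lem:fsp_global_ultrafilter} there is an ultrafilter $\U\supseteq\idl{J}^*$ such that $\idl{E}$ is $\U^*$-MED. Since $\mathrm{Fin}\subseteq\idl{J}\subseteq\U^*$, the ultrafilter $\U$ is non-principal. The family $\idl{E}$ is infinite, because a finite family is avoided pointwise in $\bairesp$.

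To show that $\U$ is a $P$-point, I will use the partition characterization. Take a partition $\omega=\bigsqcup_n P_n$ with every $P_n\notin\U$, and choose distinct $f_n\in\idl{E}$. Glue them into $h$ by setting $h\uphar P_n=f_n\uphar P_n$. Pick $f\in\idl{E}$ with $\Eq{\omega}(h,f)\in\U$, and remove the single $P_n$ with $f_n=f$ if there is one. Call the resulting set $B\in\U$.

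For every $m$ we get $B\cap P_m\subseteq \Eq{\omega}(f_m,f)\in\idl{J}$, where the right-hand side is empty for the removed index. The sets $\Eq{\omega}(f_m,f)$ need not be finite. Instead, the $P$-ideal property gives $A\in\idl{J}$ with $\Eq{\omega}(f_m,f)\subseteq^*A$ for all $m$. Then $B\setminus A\in\U$, since $A\in\U^*$, and $(B\setminus A)\cap P_m$ is finite for every $m$. Hence $\U$ is a $P$-point, and $\idl{E}$ witnesses (2).

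For (2)$\Rightarrow$(3), let $\U$ be a $P$-point and $\idl{E}$ analytic and $\U^*$-MED. By Lemma \ref{lem:fsp_global_ultrafilter} applied in the direction (2)$\Rightarrow$(1), it suffices to find an $F_\sigma$ ideal $\idl{I}$ with two properties. First, $\idl{I}\subseteq\U^*$, so that $\idl{I}^*\subseteq\U$. Second, $\idl{I}$ contains every $\Eq{\omega}(f,g)$ for distinct $f,g\in\idl{E}$, so that $\idl{E}$ is $\idl{I}$-e.d. Consider
\[
\idl{X}:=\{\Eq{\omega}(f,g):f\neq g\in\idl{E}\}.
\]
This is an analytic subset of $\U^*$, being a continuous image of an analytic set. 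Let $\idl{K}$ be the ideal generated by $\idl{X}\cup\mathrm{Fin}$, which is again analytic and contained in $\U^*$. The task is to separate $\idl{K}$ from $\U$ by an $F_\sigma$ ideal.

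The tool I will use is Taylor's theorem, Theorem \ref{thm:p_point_sparse_homogeneity}, applied to the analytic set $\U\cap{}$(a suitable analytic set of codes), or directly to $\idl{K}$. The aim is to get $H\in\U$ and a strictly increasing $q$ with $q(n)>n$ such that $[H]^q$ is homogeneous. From $H$ and $q$ I then define $\idl{I}$ by a lower semicontinuous submeasure. Cut $H$ into consecutive $q$-blocks, and let $\varphi(X)$ measure how many blocks $X\cap H$ meets, or the maximal size of a $q$-rare subset of $X\cap H$ when thinned appropriately. Put $X\in\idl{I}$ iff $\varphi(X)<\infty$. This is $F_\sigma$, it is proper, and $H\notin\idl{I}$. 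The remaining point is to use the homogeneity to show $\idl{K}\subseteq\idl{I}$. The expected argument is that a $\idl{K}$-set meeting infinitely many blocks would yield a $q$-rare subset of $H$ lying in $\idl{K}$, while a suitably chosen rare subset lies in $\U$. Since every member of $\idl{I}$ then has small trace on $H\in\U$, we would get $\idl{I}\subseteq\U^*$.

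The main obstacle is exactly this step: choosing the right analytic set to feed into Taylor's theorem and the right submeasure. Closure of $\idl{K}$ under subsets makes the naive homogeneity alternatives degenerate, because every $q$-rare subset of a $\idl{K}$-set is again in $\idl{K}$. My first attempt would be to color via the block-union operation, declaring $X\in[H]^q$ good iff the union of the $q$-blocks determined by $X$ lies in $\idl{K}$. If that fails, I would fall back on the known fact that every analytic ideal contained in the dual of a $P$-point extends to an $F_\sigma$ ideal contained in $\U^*$.
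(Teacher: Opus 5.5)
Your argument is essentially the paper's. For (1)$\Rightarrow$(2) you use the same $P$-ideal modification of Claim~\ref{claim:u_p_point}, with one slip: for the removed index $n$ it is $B\cap P_n$ that is empty, whereas $\Eq{\omega}(f_n,f)=\omega\notin\idl{J}$, so apply the $P$-ideal property to the sets $B\cap P_m\in\idl{J}$, as the paper does. For (2)$\Rightarrow$(3) the paper uses exactly your fallback: it cites \cite[Claim~2.4]{zapletal2009preserving} to extend the analytic ideal $\idl{K}\subseteq\U^*$ to an $F_\sigma$ ideal $\idl{I}$ with $\idl{I}\cap\U=\emptyset$, and then applies Lemma~\ref{lem:fsp_global_ultrafilter}.

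Drop the Taylor-theorem detour. As sketched, your submeasure yields only the ideal $\{X\subseteq\omega: X\cap H\in\mathrm{Fin}\}$. That ideal contains $\idl{K}$ only in the alternative $[H]^q\cap\idl{K}=\emptyset$. The other alternative, $[H]^q\subseteq\idl{K}$, only bounds $\idl{K}$ from below and gives no $F_\sigma$ ideal above it.
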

	
	\begin{proof}
		\phantom{a}
		
		\begin{itemize}
			\item[(1)] Assume (1). Similar to Theorem \ref{thm:no_analytic_fsp_fin} and Claim \ref{claim:u_p_point}, we get a corresponding ultrafilter $ \U $ containing $ \idl{J}^* $ such that $ \idl{E}\subseteq\bairesp $ is $ \U^* $-MED, and that $ \U $ is a P-point: in our case, we fix $ P_n $, choose $ f_n $, define $ h $, choose $ f $ and define $ B $ exactly the same way, and we now have $ \forall n\in\omega, B\cap P_n \in \idl{J}  $. As $ \idl{J} $ is a $ P $-ideal, choose $ A\in\idl{J} $ such that $ \forall n\in\omega, \  B\cap P_n\subseteq^*A $. Now, let $ X = B\setminus A  = B\cap A^\compl \in \U$. Also, $ \forall n\in\omega , \ X\cap P_n = B\cap P_n \setminus A $, which is finite. So $ \U  $ is a P-point witnessing (2).
			
			\item[(2)] Assume (2). Let $ \U $ be a $ P $-point and let $ \idl{E}\subseteq\bairesp $ be an analytic $ \U^* $-MED family. Let $ \idl{K} $ be the ideal generated by
			\[
			\mathrm{Fin}\cup \left\{ \Eq{\omega}(f,g): f,g\in\idl{E}\text{ and }f\neq g \right\}.
			\]
			So $ \idl{K} $ is analytic.
			
			Since $ \idl{E} $ is $ \U^* $-e.d., we have $ \forall f\neq g \in\idl{E}, \ \Eq{\omega}(f,g)\in\U^* $; also $ \mathrm{Fin}\subseteq \U^* $. So $ \idl{K}\subseteq\U^* $ and further $ \idl{K}\cap \U =\emptyset$. By \cite[Claim~2.4]{zapletal2009preserving}, there is an $ F_\sigma $ ideal $ \idl{I} \supseteq \idl{K}$ such that $ \idl{I}\cap\U =\emptyset$, which is $ \idl{I}\subseteq \U^* $. Also, for any $ f\neq g\in\idl{E} $, we have $ \Eq{\omega}(f,g)\in\idl{K}\subseteq\idl{I} $, so $ \idl{E} $ is $ \idl{I} $-e.d.. By Lemma
			\ref{lem:fsp_global_ultrafilter}, we have that $ \idl{E} $ is an analytic \FSP $ \idl{I} $-MED family. So (2) implies (3).
		\end{itemize}
	\end{proof}

		\subsection{Coanalytic $ \U^* $-MED families in $ L $}
	
	We now show that the analytic bound cannot be improved in $ L $.
	
	\begin{defn}\label{def:vl_coding}
		Let $ M=\bairesp $, let $ E=\{2n:n\in\omega\} $, and fix a recursive injection
		\[
		C:2\times\ramseysp\times\bairesp\times\bairesp\to\bairesp
		\]
		such that $ \ran(C) $ is recursive, the coordinate maps $ \tagg,\base,\real,\degree $ are partial recursive on $ \ran(C) $, and whenever $ y=C(i,A,x,r) $, we have $ \tagg(y)=i $, $ \base(y)=A $, $ \real(y)=x $, $ \degree(y)=r $, and $ y(2n)=x(2n) $ for every $ n\in\omega $. Thus $ 	E\subseteq\Eq{\omega}(x,y) $ whenever $ y=C(i,A,x,r) $.
		
		Let $ \Breq $ be the disjoint union of four standard Borel spaces coding, respectively, reals $ x\in\bairesp $, sets $ Y\subseteq\omega $, partitions $ \omega=\bigcup_{n\in\omega}P_n $ into finite pieces, and colorings $ [\omega]^2\to2 $. For $ s\in M^{\leq\omega} $, define
		\[
		\rec(s)=\{z\in\ran(s):z\in\ran(C)\text{ and }\tagg(z)=1\}.
		\]
		We say that $ s $ is \textbf{good}, and write $ \good(s) $, if every $ z\in\ran(s) $ belongs to $ \ran(C) $, every $ \base(z)\subseteq^*E $, and the family $ \{\base(z):z\in\ran(s)\} $ has \textbf{strong finite intersection property} (SFIP) i.e.every collection of finitely many elements has infinite intesection.
	\end{defn}
	
	We use the following coanalytic transfinite construction theorem.
	
	\begin{thm}[{\cite{vidnyanszky2014transfinite}}]\label{thm:vidnyanszky_coanalytic}
		Assume $ V=L $. Let $ B $ be an uncountable Borel subset of a Polish space and let $ M $ be a Polish space. Suppose $ R\subseteq M^{\leq\omega}\times B\times M $ is coanalytic and every section
		\[
		R(s,p)=\{y\in M:(s,p,y)\in R\}
		\]
		is cofinal in the Turing degrees. Then there are enumerations $ B=\{p_\xi:\xi<\omega_1\} $ and $ X=\{y_\xi:\xi<\omega_1\} $, where $ X\subseteq M $ is coanalytic, such that for every $ \xi<\omega_1 $ there is an enumeration $ s_\xi $ of $ \{y_\eta:\eta<\xi\} $ satisfying $ y_\xi\in R(s_\xi,p_\xi) $.
	\end{thm}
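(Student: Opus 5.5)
This is the coanalytic recursion theorem of \cite{vidnyanszky2014transfinite}, and I would prove it by Miller's self-coding method in $L$. The idea is to run the obvious $\omega_1$-length recursion along the $\bfsgm^1_2$-good wellorder $<_L$ of the reals. I would also force each chosen $y_\xi$ to encode, via its Turing degree, a countable level of $L$ that contains the entire history of the construction up to stage $\xi$. Membership in $X$ can then be checked by a single real computable from $y$, rather than by an unbounded quantifier over reals.

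\textbf{Step 1: the recursion.} Fix the $\Delta^1_2$-good wellorder of $\mathbb{R}\cap L$. Enumerate $B=\{p_\xi:\xi<\omega_1\}$ $<_L$-increasingly; this is possible since $B$ is uncountable Borel and every real of $L$ has countably many $<_L$-predecessors. At stage $\xi$, suppose $\langle y_\eta:\eta<\xi\rangle$ has been built. Let $\delta_\xi$ be the least countable ordinal such that $L_{\delta_\xi}$ is a model of a large enough finite fragment of $\mathsf{ZF}^-+V=L$ containing $p_\xi$ and the sequence $\langle y_\eta:\eta<\xi\rangle$. Let $s_\xi$ be the $<_L$-least (canonical) enumeration of $\{y_\eta:\eta<\xi\}$ in $L_{\delta_\xi}$, and let $c_\xi$ be a real coding $L_{\delta_\xi}$. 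By the cofinality hypothesis there is $y\in R(s_\xi,p_\xi)$ with $y\geq_T c_\xi$. Define $y_\xi$ as the $<_L$-least such $y$. If the stages need to be separated, I would further insist that $y$ is not already in $L_{\delta_\xi}$.

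\textbf{Step 2: $X$ is coanalytic.} I would show that $y\in X$ holds iff there is a real $c\le_T y$, so an arithmetical quantifier over indices $e$ with $c=\{e\}^y$, satisfying the following. The real $c$ codes a well-founded model $L_\delta$ of the fragment; in $L_\delta$ there is a stage $\xi$ at which the construction produced $s_\xi,p_\xi$; $y\in R(s_\xi,p_\xi)$; $y\ge_T c$; and no $<_L$-earlier $y'$ satisfies these conditions. Well-foundedness of a code computable from $y$ is $\Pi^1_1$ in $y$. Everything else is arithmetic in $c$, except two clauses. The first is $y\in R(\cdot)$, which is $\Pi^1_1$ by hypothesis. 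The second is the $<_L$-minimality clause. Since $y\geq_T c$, every $y'<_L y$ lies in some $L_\gamma$ with $\gamma$ countable in $L[y]$. I would arrange by the choice of $\delta$ (closing under a few more steps) that these candidates are already in a level coded hyperarithmetically in $y$. This bounds the universal quantifier, and the resulting set is $\Pi^1_1$ by Spector--Gandy/Kleene. Mostowski absoluteness of $\Pi^1_1$ statements for well-founded models lets $L_\delta$ correctly evaluate the relevant instances of $R$ and the construction.

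\textbf{Main obstacle.} The hard step is the minimality clause together with uniqueness of the recovered stage. I must show that the condition "$y$ is the least element of $R(s_\xi,p_\xi)$ above $c_\xi$" can be verified using only models coded below $y$, without a $\Sigma^1_2$ quantifier. For this I would use the standard fact that if $y\in L_{\omega_1^y}$-style situations hold, namely $y$ is constructed at a level below $\omega_1^{y}$, then the $<_L$-initial segment below $y$ is $\Delta^1_1(y)$. I would then refine the choice in Step 1 so that $y_\xi$ additionally satisfies $y_\xi\in L_{\omega_1^{y_\xi}}$. This is achievable by again invoking cofinality, choosing $y$ above a code of an $L$-level containing the least witness. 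Finally I would check that the map $\xi\mapsto y_\xi$ is injective, which follows from $y_\xi\notin L_{\delta_\xi}\ni y_\eta$ for $\eta<\xi$. Then $X=\{y_\xi\}$ and the sequences $s_\xi$ witness $y_\xi\in R(s_\xi,p_\xi)$ as required.
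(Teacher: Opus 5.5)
The paper gives no proof of this theorem; it quotes it from Vidny\'anszky, so your outline has to stand on its own.

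Its architecture is the right one. You enumerate $B$ along $<_L$, code a countable level containing the history into the Turing degree of $y_\xi$, recover that level from $y$ by a $\Pi^1_1$ well-foundedness check, and rely on absoluteness inside it. But the outline breaks exactly at the step you flag, for two separate reasons.

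\emph{First}, nothing in the choice of $\delta_\xi$ controls the $L$-rank of $y_\xi$ relative to $\omega_1^{y_\xi}$. The condition $y\geq_T c_\xi$ gives $\delta_\xi<\omega_1^{y}$ and forces $y\notin L_{\delta_\xi}$. However, the $<_L$-least element of $R(s_\xi,p_\xi)\cap\{y:y\geq_T c_\xi\}$ lives at the first level where that set acquires an element, and nothing keeps this level below $\omega_1^y$. Your repair takes $y$ above a code of a level containing the least witness $w$. That puts $w$ into $L_{\omega_1^y}$, not $y$: the new $y$ has rank beyond that level, and iterating gives a regress with no fixed point.

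\emph{Second}, even granting $y\in L_{\omega_1^y}$, the minimality clause $\forall y'<_L y\,[\,y'\notin R(s_\xi,p_\xi)\vee y'\not\geq_T c_\xi\,]$ has a $\Sigma^1_1$ matrix. A universal quantifier over $\Delta^1_1(y)$ reals preserves $\Sigma^1_1$, not $\Pi^1_1$. Kleene's restricted quantification only handles $\exists y'\in\Delta^1_1(y)$ in front of a $\Pi^1_1$ matrix. So your description of $X$ is of the form $\Pi^1_1\wedge\Sigma^1_1$. To rescue the $<_L$-route, you would have to evaluate minimality inside a $\Pi^1_1$-correct level $L_\gamma\ni y$ (e.g.\ admissible) with $\gamma<\omega_1^y$. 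Turing cofinality alone does not supply candidates with that property.

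A clean way around both problems is to drop the minimality clause and get uniqueness from a uniformization instead. The relation $P=\{((c,s,p),y): y\in R(s,p),\ c\leq_T y\}$ is $\Pi^1_1$, and its sections are nonempty by cofinality. By Kondo's theorem it has a $\Pi^1_1$ uniformization $P^*$. Let $c_\xi$ be the $<_L$-least code of $L_{\delta_\xi}$, where $\delta_\xi$ is least with $L_{\delta_\xi}\models\mathrm{ZF}^-+V=L$ and $L_{\delta_\xi}$ containing $\langle y_\eta:\eta<\xi\rangle$, $s_\xi$ and $p_\xi$. Let $y_\xi$ be the unique $y$ with $((c_\xi,s_\xi,p_\xi),y)\in P^*$.

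Then $y\in X$ if and only if there is $e$ such that the following hold:
\begin{itemize}
\item $c=\{e\}^y$ codes a well-founded model $L_\delta$ of $\mathrm{ZF}^-+V=L$;
\item $L_\delta$ recognizes itself as the stage-$\xi$ model of the construction with data $s,p$ (it computes the earlier stages correctly by $\Pi^1_1$ absoluteness, since it contains them);
\item $c$ is the $<_L$-least code of $L_\delta$;
\item $((c,s,p),y)\in P^*$.
\end{itemize}
This is a number quantifier in front of $\Pi^1_1$ conditions, the remaining clauses being $\Delta^1_1$ in $c$, hence $\Pi^1_1$. Injectivity of $\xi\mapsto y_\xi$ follows from $y_\xi\notin L_{\delta_\xi}$.
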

	
	\begin{thm}\label{thm:vl_ramsey_coanalytic_med}
		Assume $ V=L $. There is a $ \bfsgm^1_2 $ Ramsey ultrafilter $ \U $ on $ \omega $ and a coanalytic $ \U^* $-MED family $ \idl{E}\subseteq\bairesp $.
	\end{thm}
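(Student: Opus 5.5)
We plan to apply Theorem~\ref{thm:vidnyanszky_coanalytic} with $M=\bairesp$ and $B=\Breq$, and to run a single transfinite recursion of length $\omega_1$. This recursion will simultaneously build a filter base $\{\base(z)\}$ generating the Ramsey ultrafilter and a family of reals forming the $\U^*$-MED family. Each stage $\xi$ handles one requirement $p_\xi\in\Breq$.

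The requirement types are as follows.
\begin{itemize}
\item A real $x$: it must be caught by some family member on a set in $\U$.
\item A set $Y$: either $Y$ or $\omega\setminus Y$ must enter $\U$.
\item A partition into finite pieces: we need a selector-type set in $\U$, as in the $P$-point/selectivity characterization.
\item A coloring: we need a homogeneous set in $\U$.
\end{itemize}
By Theorem~\ref{thm:ramsey_ultrafilter}, the last type alone already forces Ramseyness. Each output $y_\xi=C(i,A,x,r)$ carries a base set $A$, with $A\subseteq^* E$ and $A$ extending the current SFIP family. The tag $i=1$ marks $y_\xi$ as a genuine family member (a \emph{recorded} real). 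The real coordinate is arranged so that $E\subseteq\Eq{\omega}(x,y)$.

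Define $R(s,p)$ to be the set of $y\in\ran(C)$ such that, whenever $\good(s)$ holds, the following hold.
\begin{itemize}
\item $s^\frown y$ is good.
\item $\base(y)$ satisfies the requirement $p$: it is contained in $Y$ or its complement, meets each piece finitely (selector), or is homogeneous, as appropriate.
\end{itemize}
For $p$ a real $x$, there are two cases.
\begin{itemize}
\item If some $z\in\rec(s)$ already has $\Eq{\omega}(x,z)\supseteq^*$ a set positive for the filter generated so far, shrink $\base(y)$ into that set. (This case needs care to keep the condition coanalytic.)
\item Otherwise set $\tagg(y)=1$ and $\real(y)=x$. Then choose $\base(y)\subseteq E$ avoiding $\Eq{\omega}(z,y)$ for all $z\in\rec(s)$. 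This is possible because $s$ is countable and the filter is countably generated: diagonalize to get an infinite pseudo-intersection inside $E$, then thin it so that $y$ differs from each earlier recorded real off a set we put in the filter's dual. The odd coordinates of $y$ are free, so eventual difference can be arranged on a set whose complement is in the final filter.
\end{itemize}
The degree coordinate $r$ is used to make $R(s,p)$ cofinal in the Turing degrees: any $y$ that works can be recoded with an arbitrary $r$ of high degree, and $C$ is recursive.

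The definability of $R$ is checked as follows. The statements ``good'', SFIP, homogeneity, and ``meets pieces finitely'' are arithmetical in $s,p,y$. The phrase ``eventually different from every earlier recorded real on the filter'' becomes $\Pi^1_1$ once phrased via $\base(y)\cap\Eq{\omega}(\real(y),z)$ being finite, for all $z\in\ran(s)$ with $\tagg(z)=1$. Theorem~\ref{thm:vidnyanszky_coanalytic} then yields a coanalytic $X=\{y_\xi\}$.

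We then set $\idl{E}=\{\real(y)\text{-adjusted member}: y\in X,\tagg(y)=1\}$; more precisely, $\idl{E}=\{y\in X:\tagg(y)=1\}$ itself. This set is coanalytic as a recursive restriction of $X$. Let $\U$ be generated by $\{\base(y):y\in X\}$, which is a filter by SFIP.
\begin{itemize}
\item $\U$ is an ultrafilter because of the $Y$-requirements, and Ramsey because of the coloring requirements.
\item $\U$ is $\bfsgm^1_2$: $Y\in\U$ iff there exist countably many $y\in X$ whose bases intersect, modulo finite, into $Y$, and this is $\Sigma^1_2$ since $X$ is $\Pi^1_1$.
\item $\idl{E}$ is $\U^*$-e.d.: for recorded $y_\eta$ and $y_\xi$ with $\eta<\xi$, the set $\base(y_\xi)\in\U$ meets $\Eq{\omega}(y_\eta,y_\xi)$ finitely.
\item $\idl{E}$ is $\U^*$-maximal: each $x$ equals some $p_\xi$. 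Either it was caught earlier, or $y_\xi$ agrees with $x$ on $E\supseteq\base(y_\xi)\in\U$.
\end{itemize}

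The main obstacle is the real-requirement case. We need a uniform, coanalytic way to decide whether $x$ is already caught by the filter generated so far, and still to produce $y$ that is eventually different, on a filter set, from all earlier recorded reals. That filter set must remain compatible with future $Y$-decisions. Our first attempt will always record a new real unless $x$ is literally equal on a cofinite part of $E$ to an earlier recorded real, and we will rely on the requirement that all bases lie $\subseteq^* E$, together with the free odd coordinates, to make both cases arithmetical.
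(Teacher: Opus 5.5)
\paragraph{The gap: the real-requirement rule you commit to.} Your architecture matches the paper's. You apply Theorem~\ref{thm:vidnyanszky_coanalytic} to $M=\bairesp$ and $\Breq$, and use the coding $C$ copying even coordinates so that $E\subseteq\Eq{\omega}(x,y)$ and $E\in\U$. The tags, the SFIP bases, the degree coordinate for Turing cofinality, and $\idl{E}=\{y\in X:\tagg(y)=1\}$ are also as in the paper.

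The gap is in the real requirement, which you rightly single out, and specifically in the rule you finally commit to: ``record a new real unless $x$ agrees with an earlier recorded $z$ on a cofinite part of $E$''. This rule produces empty sections. Take a good $s$ with $\rec(s)=\{z\}$ and some $z'\in\ran(s)$ with $E\setminus\base(z')$ infinite, and let $x$ satisfy $\Eq{\omega}(x,z)=\base(z')$. The rule records $x$, so $\base(y)\cap\Eq{\omega}(y,z)$ must be finite. Since $\base(y)\subseteq^*E$ and $y=x$ on $E$, this says $\base(y)\cap\base(z')$ is finite, while SFIP demands that it be infinite. Hence $R(s,p)=\emptyset$, and Theorem~\ref{thm:vidnyanszky_coanalytic}, which needs \emph{every} section to be cofinal, cannot be applied. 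If you drop the avoidance clause instead, then whenever this configuration occurs in the construction you get $\Eq{\omega}(y,z)\supseteq E\cap\base(z')\in\U$, and eventual difference fails.

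\paragraph{How to fix it.} The criterion you wrote first does work, and it needs no special care for definability. $\Eq{\omega}(x,z)$ is positive for the filter generated by the bases in $s$ iff $\Eq{\omega}(x,z)\cap\bigcap_{j\in F}\base(s(j))$ is infinite for every finite set $F$ of indices. This is arithmetical in codes for $(s,x)$, so $R$ remains Borel. If some $z\in\rec(s)$ gives a positive set, shrink $\base(y)$ into $\Eq{\omega}(x,z)$ with tag $0$. Otherwise every $\Eq{\omega}(x,z)$ is null for that countably generated filter, so any pseudointersection avoids all of them at once.

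The paper avoids even this test. Its condition (M3) requires the new base $A$ to decide $\Eq{\omega}(x_p,z)$ modulo finite for every $z\in\rec(s)$; this is possible by fusing countably many two-way splits. It then sets $i=0$ exactly when $A\subseteq^*\Eq{\omega}(x_p,z)$ for some $z$. Since $A\in\U$, this decides whether $x_p$ is caught with respect to the \emph{final} ultrafilter, which is the question that actually matters.

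\paragraph{Smaller repairs.}
\begin{itemize}
\item Your $R$ constrains $\tagg(y)$ only in the recording case. A tag-$1$ output produced for a set, partition or coloring requirement, or in the non-recording real case, would enter $\idl{E}$ with no eventual-difference guarantee. You must force $\tagg(y)=0$ there, as the paper does in (M4) and in the $i=0$ clause of (M3). Your verification that $\base(y_\xi)$ meets $\Eq{\omega}(y_\eta,y_\xi)$ finitely tacitly relies on this.
\item In the complexity computation, ``countably many'' bases must be ``finitely many''. A countable intersection of members of $\U$ may be finite, hence $\subseteq^*$ every $Y$. With finitely many the definition is $\bfsgm^1_2$. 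The paper instead requires $A\subseteq^*\base(z)$ for all $z\in\ran(s)$ in (M1), so the bases form a $\subseteq^*$-tower and a single base suffices.
\item For partitions into finite pieces, ``meets each piece finitely'' is vacuous and should read ``in at most one point''. This does not matter for the conclusion, since the coloring requirements already give Ramseyness.
\end{itemize}
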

	\begin{proof}
		Define $ R\subseteq M^{\leq\omega}\times\Breq\times M $ as follows. If $ \neg\good(s) $, then $ (s,p,y)\in R $ if and only if $ y\in\ran(C) $, $ \tagg(y)=0 $, and $ \base(y)\subseteq^*E $. Suppose $ \good(s) $. Then $ (s,p,y)\in R $ if and only if $ y=C(i,A,x,r) $ and the following conditions hold.
		\begin{enumerate}[label=(M\arabic*),font=\upshape]
			\item $ A\subseteq^*E $, and $ A\subseteq^*\base(z) $ for every $ z\in\ran(s) $.
			\item If $ p $ codes a set $ Y\subseteq\omega $, then $ A\subseteq^*Y $ or $ A\subseteq^*(\omega\setminus Y) $. If $ p $ codes a partition $ \omega=\bigcup_{n\in\omega}P_n $ into finite pieces, then $ |A\cap P_n|\leq1 $ for every $ n\in\omega $. If $ p $ codes a coloring $ c:[\omega]^2\to2 $, then $ A $ is homogeneous for $ c $. If $ p $ codes a real $ x_p $, then $ x=x_p $.
			\item If $ p $ codes a real $ x_p $, then for every $ z\in\rec(s) $ exactly one of $ A\subseteq^*\Eq{\omega}(x_p,z) $ and $ A\subseteq^*(\omega\setminus\Eq{\omega}(x_p,z)) $ holds, and $ i=0$ if and only if $\exists z\in\rec(s)\ A\subseteq^*\Eq{\omega}(x_p,z) $.
			\item If $ p $ does not code a real, then $ i=0 $.
		\end{enumerate}
		
		\begin{claim}\label{claim:vl_relation}
			The relation $ R $ is Borel, and every section $ R(s,p) $ is cofinal in the Turing degrees.
		\end{claim}
		\begin{claimproof}{\ref{claim:vl_relation}}
			The coding maps are recursive. The relations $ A\subseteq^*B $, $ A\subseteq^*\Eq{\omega}(x,z) $, and $ A\subseteq^*(\omega\setminus\Eq{\omega}(x,z)) $ are Borel. Meeting every member of a partition into finite pieces in at most one point and being homogeneous for a fixed coloring are Borel conditions, and $ \good(s) $ is a Borel condition. Thus $ R $ is Borel.
			
			Fix $ s\in M^{\leq\omega} $, $ p\in\Breq $, and $ w\in\bairesp $. If $ \neg\good(s) $, let $ y=C(0,E,0,w) $, where 0 denotes the zero real. Then $ y\in R(s,p) $, and $ w\leq_Ty $ since $ \degree(y)=w $.
			
			Suppose $ \good(s) $. Since $ \{\base(z):z\in\ran(s)\}\cup\{E\} $ is a countable SFIP family of infinite subsets of $ \omega $, choose an infinite pseudointersection $ P\subseteq E $. We refine $ P $ according to the requirement coded by $ p $.
			\begin{itemize}
				\item[(1)] If $ p $ codes a set $ Y\subseteq\omega $, pass to an infinite subset of $ P $ contained in $ Y $ or in $ \omega\setminus Y $.
				\item[(2)] If $ p $ codes a partition $ \omega=\bigcup_{n\in\omega}P_n $ into finite pieces, pass to an infinite subset of $ P $ meeting every $ P_n $ in at most one point.
				\item[(3)] If $ p $ codes a coloring $ c:[\omega]^2\to2 $, pass to an infinite homogeneous subset of $ P $.
				\item[(4)] If $ p $ codes a real, leave $ P $ unchanged at this stage.
			\end{itemize}
			
			Call the resulting infinite set $ P' $. If $ p $ codes a real $ x_p $, enumerate $ \rec(s)=\{z_n:n\in I\} $, where $ I $ is an initial segment of $ \omega $ or all of $ \omega $. Starting with $ P_0=P' $, recursively choose an infinite $ P_{n+1}\subseteq P_n $ such that
			\[
			P_{n+1}\subseteq\Eq{\omega}(x_p,z_n)
			\]
			or
			\[
			P_{n+1}\subseteq\omega\setminus\Eq{\omega}(x_p,z_n).
			\]
			Choose an infinite pseudointersection $ A $ of this decreasing sequence. If $ p $ does not code a real, let $ A=P' $.
			
			Choose $ i $ according to (M3) or (M4), let $ x=x_p $ if $ p $ codes a real and let $ x=0 $ otherwise, and define $ y=C(i,A,x,w) $. Then $ y\in R(s,p) $, and again $ w\leq_Ty $.
		\end{claimproof}
		
		Apply Theorem \ref{thm:vidnyanszky_coanalytic} to $ R $. Let
		\[
		\Breq=\{p_\xi:\xi<\omega_1\},\qquad X=\{y_\xi:\xi<\omega_1\}
		\]
		be as given by the theorem, where $ X $ is coanalytic, and let $ s_\xi $ enumerate $ \{y_\eta:\eta<\xi\} $ such that $ y_\xi\in R(s_\xi,p_\xi) $. Define
		\[
		\U=\{Y\subseteq\omega:\exists\xi<\omega_1\ \base(y_\xi)\subseteq^*Y\},
		\qquad
		\idl{E}=\{y\in X:\tagg(y)=1\}.
		\]
		
		\begin{claim}\label{claim:vl_coherence}
			For every $ \xi<\omega_1 $, we have $ \good(s_\xi) $, $ \base(y_\xi)\subseteq^*E $, and $ \base(y_\xi)\subseteq^*\base(y_\eta) $ for every $ \eta<\xi $.
		\end{claim}
		\begin{claimproof}{\ref{claim:vl_coherence}}
			We argue by induction on $ \xi $. Suppose that the conclusion holds below $ \xi $. Then the bases $ \base(y_\eta) $, $ \eta<\xi $, are $ \subseteq^* $-decreasing and are all almost contained in $ E $. Thus they form a SFIP family and $ \good(s_\xi) $ holds. Therefore the $ \neg\good $ clause in the definition of $ R $ is not used at stage $ \xi $, and (M1) gives $ \base(y_\xi)\subseteq^*E $ and $ \base(y_\xi)\subseteq^*\base(y_\eta) $ for every $ \eta<\xi $.
		\end{claimproof}
		
		\begin{claim}\label{claim:vl_ramsey}
			The family $ \U $ is a Ramsey ultrafilter on $ \omega $, and $ E\in\U $.
		\end{claim}
		\begin{claimproof}{\ref{claim:vl_ramsey}}
			By Claim \ref{claim:vl_coherence}, the family $ \{\base(y_\xi):\xi<\omega_1\} $ is $ \subseteq^* $-decreasing, so $ \U $ is a proper filter. Since every $ \base(y_\xi) $ is infinite, $ \U $ is non-principal.
			
			Let $ Y\subseteq\omega $. Some $ p_\xi $ codes $ Y $. By (M2), $ \base(y_\xi)\subseteq^*Y $ or $ \base(y_\xi)\subseteq^*(\omega\setminus Y) $, so either $ Y\in\U $ or $ \omega\setminus Y\in\U $. Thus $ \U $ is an ultrafilter.
			
			Let $ c:[\omega]^2\to2 $. Some $ p_\xi $ codes $ c $, and by (M2), the set $ \base(y_\xi)\in\U $ is homogeneous for $ c $. Thus $ \U $ is Ramsey. Finally, Claim \ref{claim:vl_coherence} gives $ \base(y_\xi)\subseteq^*E $ for every $ \xi $, so $ E\in\U $.
		\end{claimproof}
		
		\begin{claim}\label{claim:vl_coanalytic}
			The family $ \idl{E} $ is coanalytic.
		\end{claim}
		\begin{claimproof}{\ref{claim:vl_coanalytic}}
			The set $ X $ is coanalytic, while $ \{y\in\ran(C):\tagg(y)=1\} $ is Borel. Hence their intersection $ \idl{E} $ is coanalytic.
		\end{claimproof}
		
		\begin{claim}\label{claim:vl_med}
			The family $ \idl{E} $ is $ \U^* $-MED.
		\end{claim}
		\begin{claimproof}{\ref{claim:vl_med}}
			We first prove $ \U^* $-eventual difference. Let $ y_\eta,y_\xi\in\idl{E} $ with $ \eta<\xi $, and suppose toward a contradiction that $ \Eq{\omega}(y_\eta,y_\xi)\in\U $.
			
			Let $ x_\xi=\real(y_\xi) $. Since $ E\in\U $ and $ E\subseteq\Eq{\omega}(x_\xi,y_\xi) $, we have $ E\cap\Eq{\omega}(y_\eta,y_\xi)\subseteq\Eq{\omega}(x_\xi,y_\eta) $, and therefore $ \Eq{\omega}(x_\xi,y_\eta)\in\U $.
			
			Since $ \tagg(y_\xi)=1 $, condition (M4) implies that $ p_\xi $ codes the real $ x_\xi $. Also $ y_\eta\in\rec(s_\xi) $. By (M3), since $ \tagg(y_\xi)=1 $, we have $ \base(y_\xi)\subseteq^*(\omega\setminus\Eq{\omega}(x_\xi,y_\eta)) $. So $ \omega\setminus\Eq{\omega}(x_\xi,y_\eta)\in\U $, contradicting $ \Eq{\omega}(x_\xi,y_\eta)\in\U $. Thus $ \idl{E} $ is $ \U^* $-e.d..
			
			Now let $ x\in\bairesp $, and choose $ \xi<\omega_1 $ such that $ p_\xi $ codes $ x $. Then $ \real(y_\xi)=x $. If $ \tagg(y_\xi)=1 $, then $ y_\xi\in\idl{E} $ and $ E\subseteq\Eq{\omega}(x,y_\xi)\in\U $. Suppose $ \tagg(y_\xi)=0 $. By (M3), there is some $ z\in\rec(s_\xi) $ such that $ \base(y_\xi)\subseteq^*\Eq{\omega}(x,z) $. Then $ z\in\idl{E} $ and $ \Eq{\omega}(x,z)\in\U $. Thus $ \idl{E} $ is $ \U^* $-MED.
		\end{claimproof}
		
		By Claims \ref{claim:vl_ramsey}, \ref{claim:vl_coanalytic}, and \ref{claim:vl_med}, it remains only to calculate the complexity of $ \U $. For every $ Y\subseteq\omega $, we have that $ Y\in\U$ if and only if $\exists y\in X\ \base(y)\subseteq^*Y $. Since $ X $ is coanalytic and $ \base(y)\subseteq^*Y $ is Borel, the ultrafilter $ \U $ is $ \bfsgm^1_2 $.
	\end{proof}

	\section{Questions}
	
	We finish with several questions suggested by the results of this paper.
	
	\begin{qst}
		For which coanalytic ideals $\mathcal{J}$ on $\omega$ containing
		$\mathrm{Fin}$ is there an analytic Van Douwen $\mathcal{J}$-MED family
		in $\omega^\omega$?
	\end{qst}
	
	Our results give a necessary condition, i.e. if such a
	family exists, then there is an infinite analytic $\mathcal{J}$-MAD family.
	
	\begin{qst}
		Which Borel ideals $\mathcal{J}$ on $\omega$ containing $\mathrm{Fin}$
		admit a Borel finitely productive $\mathcal{J}$-MED family? Which admit a
		Borel $\omega$-centered productive $\mathcal{J}$-MED family?
	\end{qst}
	
	\begin{qst}
		Are there a $P$-point $\mathcal{U}$ on $\omega$ and an analytic
		$\mathcal{U}^*$-MED family in $\omega^\omega$?
	\end{qst}

	\section*{Acknowledgments}
	
	Jialiang He was supported by the Fundamental Research Funds for the Central Universities (Grant No.~SCU2025ZLYJ-07) and the National Natural Science Foundation of China (Grant No.~12671002). During the development and preparation of this work, the authors used OpenAI ChatGPT (GPT-5.6 Sol Pro) and Google Gemini 3.1 Pro as interactive research and writing tools for literature exploration, conceptual and structural development, formulation and refinement of definitions, proof development, and manuscript drafting. All AI-assisted material was reviewed and revised by the authors, who take full responsibility for the mathematical content and the final version of the paper.

	\bibliographystyle{plain}
	\bibliography{mybibtex}
\end{document}